\theoremstyle{plain} %% needs `amsmath' package
\newtheorem{thm}{Theorem}[section]
\newtheorem{lemma}[thm]{Lemma}
\newtheorem{prop}[thm]{Proposition}
\newtheorem{exmp}[thm]{Example}
\newtheorem{conv}[thm]{Convention}
\theoremstyle{definition} %% needs `amsmath' package
\newtheorem{defn}[thm]{Definition}
\theoremstyle{remark} %% needs `amsmath' package
\newtheorem{rmk}[thm]{Remark}
\numberwithin{equation}{section}
\DeclareMathOperator{\ad}{ad}     %% adjoint derivation
\DeclareMathOperator{\Dom}{Dom}   %% domain of an operator
\DeclareMathOperator{\linspan}{span} %% linear span
\DeclareMathOperator{\tr}{tr}     %% matrix trace
\DeclareMathOperator{\blangle}{{}_{\bullet}\langle} %%Bullet langle
\DeclareMathOperator{\brangle}{\rangle_{\bullet}} %%Bullet rangle
\newcommand{\A}{\mathcal{A}}		%% Smooth subalgebra
\newcommand{\B}{\mathcal{B}}		%% Smooth subalgebra
\newcommand{\R}{\mathbb{R}}			%% Real numbers
\newcommand{\C}{\mathbb{C}}			%% Complex numbers
\newcommand{\Q}{\mathbb{Q}}			%% Rational numbers
\newcommand{\Z}{\mathbb{Z}}			%% Integers
\newcommand{\N}{\mathbb{N}}			%% Natural numbers
\newcommand{\E}{\mathcal{E}}        %% Hilbert module
\newcommand{\GS}{\mathcal{G}}		    %% Gabor system
\newcommand{\La}{\Lambda}    			%% short for \Lambda
\newcommand{\la}{\lambda}     			%% short for \lambda
\newcommand{\Lao}{\Lambda^{\circ}}      %% short for \Lambda circ
\newcommand{\lao}{\lambda^{\circ}}      %% short for \lambda circ
\newcommand{\Sub}{\Delta}     			%% Closed subspace
\newcommand{\ZZ}{\Z}
\newcommand{\RR}{\mathbb{R}}
\def\hs#1#2{\left\langle #1,#2\right\rangle} % Inner product
\def\lhs#1#2{{_\bullet\!\!}\left\langle #1,#2\right\rangle} % Algebra valued left inner product
\def\rhs#1#2{\left\langle #1,#2\right\rangle\!\!{_\bullet}}	% Algebra valued right inner product
\def\modft{\Theta}
\def\tfp#1{#1 \times \widehat{#1}} % Time-frequency plane associated to the group #1
\def\heis#1#2{E_{#1,#2}}
\def\Cdif{C_{\mathrm{dif}}}
\def\Csm{C_{\mathrm{sm}}}
\def\Cgr{C_{\mathrm{gr}}}
\newcommand{\Qp}{\mathbb{Q}_{p}} % the p-adic numbers
\newcommand{\Zp}{\mathbb{Z}_{p}} % the p-adic integers
\newcommand{\SO}{\textnormal{\textbf{S}}_{0}} % the Feichtinger algebra
\author{Are Austad}
\author{Franz Luef}
\address{Norwegian University of Science and Technology, Department of Mathematical Sciences, Trondheim, Norway.}
\email{are.austad@ntnu.no, franz.luef@ntnu.no}
\title{Modulation Spaces as a Smooth Structure in Noncommutative Geometry}
\begin{document}
\maketitle

\begin{abstract}
We demonstrate how to construct spectral triples for twisted group $C^*$-algebras of lattices in phase space of a second countable locally compact abelian group by using a class of weights appearing in time-frequency analysis. This yields a way of constructing quantum $C^k$-structures on Heisenberg modules, and we show how to obtain such structures by using Gabor analysis and weighted analogues of Feichtinger's algebra. 
We treat %show that 
the standard spectral triple for noncommutative 2-tori %is
as a special case, and as another example we define a spectral triple on noncommutative solenoids and a quantum $C^k$-structure on the associated Heisenberg modules. 
\end{abstract}

%\tableofcontents

\section{Introduction}

The interplay between Gabor analysis and noncommutative geometry \cite{Co94} has been explored earlier and has recently attracted some interest, see for example \cite{AuEn19,AuJaLu19,En19,EnJaLu19,JaLu18,JaLu18-1,Kr16,Lu09,Lu18}. Indeed, problems in Gabor analysis can often effectively be rephrased as operator algebraic questions. Moreover, Gabor analysis provides a way to generate projective modules over noncommutative tori \cite{Lu09}. Hence, results in Gabor analysis supply interesting examples of structures studied in operator algebra theory and noncommutative geometry. The main part of this paper focuses on the latter aspect.

We are interested in smooth structures on the level of projective modules over $C^*$-algebras, which we call {\it quantum $C^k$-structures}, or \emph{$QC^k$-structures}. These are based on $QC^k$-structures on spectral triples for these $C^*$-algebras. Our focus is on Heisenberg modules over twisted group $C^*$-algebras of lattices in $G\times\widehat{G}$ for a second countable locally compact abelian group $G$. 

In terms of Gabor analysis the notion of $QC^k$-modules over noncommutative tori translates into better time-frequency localization of the window function generating the frame. It is common to refer to a Gabor frame generated by a Gaussian as better than one generated by e.g., a triangle function. Our results turn this observation into a rigorous statement and weighted analogues of Feichtinger's algebra appear naturally in this context. 

We discuss in detail the noncommutative 2-torus and noncommutative solenoids, introduced in \cite{LaPa13,LaPa15}. For the noncommutative 2-torus we show that our approach yields an equal $QC^k$-structure as if using the standard spectral triple, and for the noncommutative solenoid our construction provides a definition of smoothness which so far has not appeared in the literature. Note that the smooth structures introduced by Connes for noncommutative tori are also smooth in our sense but his approach does not allow one to identify structures with a fixed regularity like $QC^k$-structures.

In Section 2 we review relevant material on Hilbert $C^*$-modules and standard module frames with a focus on equivalence bimodules describing Morita equivalent $C^*$-algebras. In the next section we introduce $QC^k$-structures on $C^*$-algebras coming from spectral triples and %for finitely generated projective 
the corresponding structures on Hilbert $C^*$-modules. % over a $C^*$-algebra. 
%In case we have $QC^k$-structures for all $k\in\NN$ we call the structures \emph{smooth}. 
Section 4 contains the basics on Gabor frames for lattices in $G\times\widehat{G}$ for a second countable locally compact abelian group $G$, and we define Feichtinger's algebra $M^1 (G)$, the prime example of a modulation space, and weighted variants $M^1_v(G)$ for a natural class of weights on $G\times\widehat{G}$. Section 5 contains the main results of the paper: (i) The construction of $QC^k$-structures on twisted group $C^*$-algebras of lattices in  $G\times\widehat{G}$, and (ii) a description of $QC^k$-structures on Heisenberg modules, and that these are just weighted Feichtinger algebras. At the end of the section we treat the noncommutative 2-torus and the noncommutative solenoid in detail. We provide examples of projective modules that are $QC^k$ but not $QC^{k+1}$, and some that are %not 
smooth.
\section{Preliminaries}
\label{sec:preliminaries}
This section is dedicated to reminding the reader about module frames and Morita equivalence. We assume basic knowledge about $C^*$-algebras, Banach $*$-algebras, and their modules.

In the sequel we will let the $C^*$-algebra valued inner product on a left Hilbert $C^*$-module be denoted by $\lhs{\cdot}{\cdot}$, and likewise the $C^*$-algebra valued inner product on a right Hilbert $C^*$-module will be denoted by $\rhs{\cdot}{\cdot}$.

For the purposes of this paper it will suffice to look at finite module frames.
\begin{defn}\label{def:finite-frame}
	Let $A$ be a $C^*$-algebra, let $E$ be a left Hilbert $A$-module, and let $( g_i)_{i=1}^l$ be a sequence in $E$. 
	We say $( g_i)_{i=1}^l$ is a \textit{module frame for $E$} if there exist constants $C,D > 0$ such that
	\begin{equation}
	\label{Eq:Modular-frame-ineq}
	C \lhs{f}{f} \leq \sum_{i=1}^l \lhs{f}{g_i}\lhs{g_i}{f} \leq D \lhs{f}{f}
	\end{equation}
	as elements of $A$ for all $f \in E$. If $C=D=1$ we say $( g_i)_{i=1}^l$ is a \emph{Parseval module frame for $E$}.
\end{defn}
For a left Hilbert $A$-module $E$ we associate to any finite sequence $(g_i)_{i=1}^l \subset E$ the $A$-adjointable operator
%We associate to any finite sequence $(g_i)_{i=1}^l \subset E$, for $E$ a left Hilbert $A$-module, the $A$-adjointable operator
\begin{equation}
\begin{split}
    \modft_{(g_i)} : E &\to E \\
    f &\mapsto \sum_{i=1}^l \lhs{f}{g_i}g_i.
\end{split}
\end{equation}
This operator is called the \emph{frame operator of $(g_i)_{i=1}^l$}. Note that the frame operator is a positive operator on $E$ as $\lhs{\modft_{(g_i)}f}{f}\geq 0 $ for all $f \in E$. The following is a special case of \cite[Theorem 1.2]{ArBa17}
\begin{prop}
    Let $(g_i)_{i=1}^l$ be a sequence in a left Hilbert $A$-module $E$.
	%Let $A$ be a $C^*$-algebra, let $E$ be a left Hilbert $C^*$-module, and let $(g_i)_{i=1}^l$ be a sequence in $E$. 
	Then $(g_i)_{i=1}^l$ is a module frame for $E$ if and only if $\modft_{(g_i)}:E \to E$ is invertible. 
\end{prop}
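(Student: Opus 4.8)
The proposition claims $(g_i)_{i=1}^l$ is a module frame for the left Hilbert $A$-module $E$ if and only if the frame operator $\modft_{(g_i)}$ is invertible as an $A$-adjointable operator. My plan is to recognize that the defining double inequality \eqref{Eq:Modular-frame-ineq} is, after identifying the middle term, precisely a statement about the frame operator being bounded above and below in the order of $A$-adjointable operators, and then to invoke the standard fact that a positive adjointable operator is invertible exactly when it is bounded below by a positive multiple of the identity.

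**The key computation and the forward direction.**

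First I would rewrite the middle term of \eqref{Eq:Modular-frame-ineq} in terms of $\modft_{(g_i)}$. For any $f \in E$ we have
\begin{equation}
\lhs{\modft_{(g_i)} f}{f} = \Bigl\langle {\textstyle\sum_{i=1}^l} \lhs{f}{g_i} g_i, f \Bigr\rangle_{\!\!\bullet}^{-1}\text{-free rewriting} = \sum_{i=1}^l \lhs{f}{g_i} \lhs{g_i}{f},
\end{equation}
where the last equality uses linearity of $\lhs{\cdot}{\cdot}$ in the first slot and the defining relation of the frame operator. Thus the frame inequality \eqref{Eq:Modular-frame-ineq} reads exactly
\begin{equation}
C \,\lhs{f}{f} \leq \lhs{\modft_{(g_i)} f}{f} \leq D\, \lhs{f}{f} \qquad (f \in E),
\end{equation}
i.e. $C\,\Id \leq \modft_{(g_i)} \leq D\,\Id$ in the usual sense for positive adjointable operators. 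For the forward direction, I would observe that the upper bound gives boundedness and the lower bound $C\,\Id \leq \modft_{(g_i)}$ shows $\modft_{(g_i)}$ is bounded below; since it is also self-adjoint and positive, bounded-below positivity yields a bounded inverse. Concretely, $\modft_{(g_i)} - C\,\Id \geq 0$ forces the spectrum to lie in $[C, D]$, which excludes $0$, so $\modft_{(g_i)}$ is invertible in the $C^*$-algebra of adjointable operators on $E$.

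**The reverse direction and the main obstacle.**

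For the converse, assume $\modft_{(g_i)}$ is invertible. Being a positive invertible element of the $C^*$-algebra $\End_A(E)$ of adjointable operators, its spectrum is a compact subset of $(0,\infty)$, so there exist $C, D > 0$ with $\spec(\modft_{(g_i)}) \subseteq [C,D]$, whence $C\,\Id \leq \modft_{(g_i)} \leq D\,\Id$. Unwinding the computation above recovers \eqref{Eq:Modular-frame-ineq}, so $(g_i)$ is a module frame with those bounds. The step I expect to require the most care is the passage between the operator inequality $C\,\Id \leq \modft_{(g_i)}$ and genuine invertibility in $\End_A(E)$: unlike the Hilbert space case, one must make sure the inverse operator is itself $A$-adjointable and that the functional-calculus argument is valid in the (possibly non-unital, but here unital) $C^*$-algebra of adjointable operators. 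Since $\End_A(E)$ is always a unital $C^*$-algebra and $\modft_{(g_i)}$ is self-adjoint, the continuous functional calculus applies directly and this obstacle dissolves; the remaining work is purely the bookkeeping of translating between the two equivalent formulations, which is why the statement is quoted as a special case of \cite[Theorem 1.2]{ArBa17}.
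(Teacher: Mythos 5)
Your argument is correct, but it is worth noting that the paper does not actually prove this proposition at all: it simply records it as a special case of \cite[Theorem 1.2]{ArBa17}. What you have written is the standard direct argument that underlies that citation, and it works. The computation $\lhs{\modft_{(g_i)}f}{f}=\sum_{i=1}^l\lhs{f}{g_i}\lhs{g_i}{f}$ (using $A$-linearity of $\lhs{\cdot}{\cdot}$ in the first slot) correctly identifies the frame inequality with $C\,\Id\leq\modft_{(g_i)}\leq D\,\Id$, and the passage back and forth between this operator inequality and invertibility via the spectrum in the unital $C^*$-algebra $\End_A(E)$ is sound, since $\modft_{(g_i)}$ is adjointable, self-adjoint and positive. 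The one ingredient you use silently, and which is the genuinely module-theoretic (as opposed to Hilbert-space) input, is the equivalence, for an adjointable operator $T$, between $\lhs{Tf}{f}\geq 0$ for all $f\in E$ and $T\geq 0$ as an element of $\End_A(E)$; this is Lance's positivity criterion and should be cited or at least named, since without it the pointwise inequality $\lhs{(\modft_{(g_i)}-C\,\Id)f}{f}\geq 0$ does not formally yield the spectral statement. For the converse you also implicitly use that the frame operator is positive independently of any frame hypothesis, which the paper records just before the proposition, so that is fine. Finally, the first displayed equation in your proof contains a garbled fragment (the superscript $-1$ and the phrase ``-free rewriting'' attached to the inner product); the intended chain of equalities is clear, but as written that display is not meaningful and should be cleaned up.
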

\begin{defn}\label{defn:dual-and-tight-frame}
	Let $E$ be a left Hilbert $A$-module and let $( g_i)_{i=1}^l \subset E$ be a frame. Denote by $\modft$ the frame operator of $( g_i)_{i=1}^l $. We say $(\modft^{-1} g_i)_{i=1}^l$ is the \emph{canonical dual frame of $( g_i)_{i=1}^l$}, and we say $(\modft^{-1/2} g_i)_{i=1}^l$ is the \emph{canonical Parseval frame associated to $(g_i)_{i=1}^l$}.
\end{defn}
Given a frame $(g_i)_{i=1}^l$ for a left Hilbert $A$-module $E$, with frame operator $\modft$, we see that the canonical dual frame $(\modft^{-1} g_i)_{i=1}^l$ has the property that
\begin{equation}
	f = \sum_{i=1}^{l} \lhs{f}{g_i} \modft^{-1} g_i = \sum_{i=1}^{l} \lhs{f}{\modft^{-1} g_i} g_i
\end{equation}
for all $f \in E$. Indeed, this follows by writing out $f = \modft^{-1} \modft f = \modft \modft^{-1} f$. Any sequence $(h_i)_{i=1}^l$ such that
\begin{equation}
f = \sum_{i=1}^l \lhs{f}{g_i} h_i
\end{equation}
will be called a \emph{dual sequence of} $( g_i)_{i=1}^l$. %It follows by \cite[Proposition 6.3]{FrLa02} that the equality 
%\begin{equation*}
%	f = \sum_{i=1}^l \lhs{f}{g_i} h_i
%\end{equation*} 
%holds too.
Likewise, if we write out  $f = \modft^{-1/2} \modft \modft^{-1/2} f$, we get that the canonical Parseval frame associated to $(g_i)_{i=1}^l$ has the property
\begin{equation}
	f = \sum_{i=1}^{l} \lhs{f}{\modft^{-1/2} g_i} \modft^{-1/2} g_i 
\end{equation}
for all $f \in E$, and is a Parseval module frame as in \cref{def:finite-frame}. 	

The following result follows from \cite[Proposition 3.9]{Jing06}. There it is assumed the $C^*$-algebra is unital, but we include a weakened version of the result so that it is clear that this assumption can be dropped.
\begin{prop}\label{prop:dual-seq-implies-frame}
    Let $E$ be a Hilbert $A$-module and let $(g_i)_{i=1}^l$ and $(h_i)_{i=1}^l$ be sequences in $E$. If 
    \begin{equation*}
        f = \sum_{i=1}^l \lhs{f}{g_i}h_i
    \end{equation*}
    for all $f \in E$, then $(h_i)_{i=1}^l$ is a frame for $E$.
\end{prop}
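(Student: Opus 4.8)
The plan is to reduce the statement to the invertibility of the frame operator $\modft_{(h_i)}$ and then invoke the characterisation of module frames via invertibility of the frame operator (the special case of \cite[Theorem 1.2]{ArBa17} recalled above). To set this up I would introduce the analysis operators $T_g, T_h \colon E \to A^l$ into the free module $A^l$, given by $T_g f = (\lhs{f}{g_i})_{i=1}^l$ and $T_h f = (\lhs{f}{h_i})_{i=1}^l$, where $A^l$ carries its standard left $A$-valued inner product. A direct computation shows these are adjointable with synthesis adjoints $T_g^*(a_i) = \sum_i a_i g_i$ and $T_h^*(a_i) = \sum_i a_i h_i$, so that $\modft_{(h_i)} = T_h^* T_h$ and, crucially, the hypothesis $f = \sum_i \lhs{f}{g_i} h_i$ reads exactly as the operator identity $T_h^* T_g = \Id$.

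Taking adjoints would give $T_g^* T_h = \Id$ as well. The key step is then to compute
\[
\Id = (T_g^* T_h)^* (T_g^* T_h) = T_h^* (T_g T_g^*) T_h \leq \|T_g T_g^*\| \, T_h^* T_h = \|T_g\|^2 \, \modft_{(h_i)},
\]
where the inequality uses $T_g T_g^* \leq \|T_g T_g^*\|\, \Id$ together with the fact that conjugation by $T_h$ preserves the order of positive operators. Hence $\modft_{(h_i)} \geq \|T_g\|^{-2} \Id$. Since $\modft_{(h_i)}$ is a positive element of the unital $C^*$-algebra $\Ladj(E)$ that is bounded below by a positive multiple of the identity, its spectrum is bounded away from $0$, so it is invertible; the cited proposition then yields that $(h_i)_{i=1}^l$ is a frame, and one even reads off the explicit lower bound $C = \|T_g\|^{-2}$.

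The main obstacle to watch is obtaining a genuine operator lower bound $C \lhs{f}{f} \leq \sum_i \lhs{f}{h_i}\lhs{h_i}{f}$ rather than merely a norm estimate. A naive route applies the Cauchy--Schwarz inequality for Hilbert $C^*$-modules directly to $\lhs{f}{f} = \sum_i \lhs{f}{h_i}\lhs{g_i}{f}$, but this produces an inequality of the shape $\lhs{f}{f}^2 \leq \|T_g\|^2 \, \|\lhs{f}{f}\| \sum_i \lhs{f}{h_i}\lhs{h_i}{f}$, from which operator monotonicity of the square root only delivers a bound by $\big(\sum_i \lhs{f}{h_i}\lhs{h_i}{f}\big)^{1/2}$ up to scalars, not the linear lower bound a frame requires. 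Routing everything through the single operator identity above sidesteps this difficulty. Finally, note that nothing here needs $A$ to be unital: the argument lives entirely in $\Ladj(E)$, which is unital regardless, which is precisely why the unitality hypothesis of \cite{Jing06} can be dropped.
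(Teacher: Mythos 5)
Your argument is correct, and it is worth noting that the paper itself offers no proof of this proposition: it simply cites \cite[Proposition 3.9]{Jing06} and asserts that the unitality hypothesis there can be dropped. Your proposal therefore supplies an actual argument where the paper supplies a reference. The route you take is clean and complete: encoding the reconstruction formula as the operator identity $T_h^* T_g = \Id$ in $\Ladj(E)$, conjugating $T_g T_g^* \leq \|T_g\|^2\,\Id$ by $T_h$ to get $\Id \leq \|T_g\|^2\, T_h^* T_h = \|T_g\|^2\,\modft_{(h_i)}$, and then either reading off the frame inequalities directly (the lower bound $\|T_g\|^{-2}$ and the automatic upper bound $\|T_h\|^2$ from $\lhs{T_h f}{T_h f} \leq \|T_h\|^2 \lhs{f}{f}$) or passing through the invertibility criterion of \cite{ArBa17} quoted earlier in the section. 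Your diagnosis of why the naive Cauchy--Schwarz route fails in the module setting (one cannot divide by $\lhs{f}{f}^{1/2}$ to upgrade the resulting square-root bound to a linear one) is accurate, and your closing remark correctly identifies why unitality of $A$ is irrelevant: every object in the argument lives in the unital $C^*$-algebras $\Ladj(E)$ and $\Ladj(A^l)$, and $A^l$ is a perfectly good Hilbert $A$-module without a unit in $A$. The only cosmetic caveat is the degenerate case $T_g = 0$, which forces $E = 0$ and makes the claim vacuous, so nothing is lost.
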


The modules of interest in this paper will be Morita equivalence bimodules. For a reference on Morita equivalence of $C^*$-algebras we refer the reader to \cite{RaWi98}. 
\begin{defn}
	\label{Definition: Equivalence bimodule}
	Let $A$ and $B$ be $C^*$-algebras. A \emph{Morita equivalence bimodule between} $A$ and $B$, or an \emph{$A$-$B$-equivalence bimodule}, is a Hilbert $A$-$B$-bimodule $E$ satisfying the following conditions:
	\begin{enumerate}
		\item $\overline{\blangle E,E\rangle}=A$  and $\overline{\langle E , E \brangle} = B$, where $\blangle E,E\rangle = \linspan_\C \{ \blangle f,g \rangle \mid f,g\in E\}$ and likewise for $\langle E,E\brangle$.
		\item For all $f,g \in E$, $a \in A$ and $b \in B$,
		\begin{equation*}
		\text{$\langle a f, g\brangle = \langle f, a^* g \brangle$  and $\blangle fb, g\rangle = \blangle f,gb^*\rangle$.}
		\end{equation*}
		\item For all $f,g,h \in E$,
		\begin{equation*}
		\blangle f,g \rangle h = f\langle g,h \brangle.
		\end{equation*}
	\end{enumerate}
	
	Now let $\A \subset A$ and $\B \subset B$ be dense Banach $*$-subalgebras such that the enveloping $C^*$-algebra of $\A$ is $A$, and the enveloping $C^*$-algebra of $\B$ is $B$. Suppose further that there is a dense $\A$-$\B$-inner product submodule $\E \subset E$ such that the conditions above hold with $\A,\B,\E$ instead of $A,B,E$. In that case we say $\E$ is an \textit{$\A$-$\B$-pre-equivalence bimodule}.
\end{defn}
We have the following important result. A proof can be found in \cite[Proposition 2.11]{AuJaLu19}, but the result is well-known and dates back further.
\begin{prop}
	\label{prop:fin-gen-proj-iff-unital}
	Let $E$ be an $A$-$B$-equivalence bimodule. Then $E$ is a finitely generated projective $A$-module if and only if $B$ is unital.
\end{prop}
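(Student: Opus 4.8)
The plan is to connect finite generation and projectivity of the left $A$-module $E$ with the existence of a finite \emph{reconstruction} $f = \sum_{i=1}^n \blangle f,g_i\rangle h_i$ for all $f\in E$, and to trade such a reconstruction for a unit of $B$ by means of condition (3) together with the fullness $\overline{\langle E,E\brangle}=B$. The hinge is the following computation: if $f=\sum_i \blangle f,g_i\rangle h_i$, then condition (3) rewrites each summand as $\blangle f,g_i\rangle h_i = f\langle g_i,h_i\brangle$, so that $f=f\,u$ with $u=\sum_i\langle g_i,h_i\brangle\in B$. Thus a finite reconstruction is essentially the same datum as an element $u\in\langle E,E\brangle$ acting as a right unit on $E$, and fullness will let me upgrade ``right unit on $E$'' to ``unit of $B$''. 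Both implications rest on this observation.

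For the direction ``$B$ unital $\Rightarrow$ $E$ finitely generated projective'', I would first use fullness to choose finitely many $f_i,k_i\in E$ with $\|1_B-c\|<1$, where $c=\sum_i\langle f_i,k_i\brangle$; then $c$ is invertible in the unital algebra $B$. Absorbing $c^{-1}$ into the second entries through the right $B$-action, i.e. setting $g_i=f_i$ and $h_i=k_ic^{-1}$, yields $\sum_i\langle g_i,h_i\brangle=1_B$ and hence the reconstruction $f=\sum_i \blangle f,g_i\rangle h_i$ for all $f$ (that this forces the frame property follows from \cref{prop:dual-seq-implies-frame}). I would then package this as the adjointable left $A$-module maps $\Phi\colon E\to A^n$, $f\mapsto(\blangle f,g_i\rangle)_i$, and $\Psi\colon A^n\to E$, $(a_i)\mapsto\sum_i a_i h_i$. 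Since $\Psi\Phi=\Id_E$, the composite $e:=\Phi\Psi$ is an idempotent with entries $\blangle h_j,g_i\rangle\in A$, so $e\in M_n(A)$ and $E\cong eA^n$, which is precisely finite generation and projectivity in the correct sense.

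For the converse I would start from the operator-algebraic meaning of the hypothesis, namely $E\cong eA^n$ for a projection $e\in M_n(A)$; transporting the canonical frame given by the columns of $e$ produces a finite reconstruction $f=\sum_i \blangle f,g_i\rangle h_i$ on $E$. Putting $u=\sum_i\langle g_i,h_i\brangle$, the hinge identity gives $fu=f$ for all $f\in E$. Testing against the right inner product and using $\langle fb,g\brangle=b^*\langle f,g\brangle$ then shows $u^*\langle f,g\brangle=\langle f,g\brangle$ and $\langle f,g\brangle u=\langle f,g\brangle$, first on the generators and, by fullness and continuity, on all of $B$. Hence $u$ is a right unit and $u^*$ a left unit, so they coincide and $B$ is unital.

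The step I expect to be the main obstacle is the passage, in the converse direction, from abstract finite generation and projectivity to a concrete finite reconstruction, because $A$ need not be unital. One must be careful that ``finitely generated projective'' is read in the sense appropriate to Hilbert $C^*$-modules over a possibly non-unital algebra --- an orthogonal summand $eA^n$ with $e\in M_n(A)$, equivalently $\Id_E\in\mathcal{K}_A(E)$ --- rather than as a purely algebraic summand of the free module, which would be vacuous (for instance $A^n$ is always algebraically free yet its identity operator lies in $M_n(A)$ only when $A$ is unital). Once this notion is fixed the column frame has entries in $A$ and the reconstruction is routine, and everything else reduces to the short manipulations with condition (3) indicated above.
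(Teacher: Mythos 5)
The paper itself offers no proof of this proposition --- it only cites \cite[Proposition 2.11]{AuJaLu19} and remarks that the result is classical --- so there is no in-text argument to compare against; judged on its own, your proof is correct and is essentially the standard imprimitivity-algebra argument. The hinge identity $\lhs{f}{g_i}h_i = f\rhs{g_i}{h_i}$ (condition (3)) is exactly the observation that rank-one operators on the left Hilbert $A$-module $E$ act as right multiplication by elements of $\langle E,E\brangle$, so a finite reconstruction is the same datum as a right unit $u\in\langle E,E\brangle$, and your computation $u^*\rhs{f}{g}=\rhs{f}{g}=\rhs{f}{g}u$ together with fullness and $u^*=u^*u=u$ correctly upgrades this to a two-sided unit of $B$. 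The forward direction is complete: density of $\langle E,E\brangle$ gives $c$ with $\Vert 1_B-c\Vert<1$, hence invertible, $h_i=k_ic^{-1}$ gives $\sum_i\rhs{g_i}{h_i}=1_B$ and the reconstruction, and $\Psi\Phi=\Id_E$ with $e=\Phi\Psi$ having entries $\lhs{h_j}{g_i}\in M_n(A)$ exhibits $E$ as a finitely generated direct summand of $A^n$. In the converse you correctly identify the one delicate point, namely what ``finitely generated projective'' means over a possibly non-unital $A$; with the reading you fix (orthogonal summand $eA^n$ with $e\in M_n(A)$, equivalently $\Id_E\in\mathcal{K}_A(E)$) the transported column/row frame of $e$ does yield the reconstruction. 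The only caveat worth recording is that if one starts from a bare $A$-module isomorphism $E\cong eA^n$ rather than a Hilbert-module one, the transported coefficients are computed in the inner product of $A^n$, not of $E$, and one must invoke the standard fact that module maps out of algebraically finitely generated Hilbert $C^*$-modules are automatically bounded and adjointable (or argue directly from $B\cong\mathcal{K}_A(E)$, where unitality of $B$ is immediate from $\Id_E$ being compact). Since in all applications in this paper $A$ and $B$ are unital twisted group $C^*$-algebras of discrete groups, this distinction is harmless here.
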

Module frames in Morita equivalence bimodules were extensively studied in \cite{AuJaLu19}. We summarize the results we will need.
\begin{prop}\label{prop:results-from-aujalu}
	Let $E$ be an $A$-$B$-equivalence bimodule where $B$ is unital, with an $\A$-$\B$-pre-equivalence bimodule $\E \subset E$. Moreover, let $(g_i)_{i=1}^l$ be a sequence in $E$ and let $\modft$ denote the frame operator of $(g_i)_{i=1}^l$. Then the following hold:
	\begin{enumerate}
		\item [i)] $(g_i)_{i=1}^l$ is a module frame for $E$ as a Hilbert $A$-module if and only if $\sum_{i=1}^l \rhs{g_i}{g_i}$ is invertible in $B$.
		\item [ii)] If $(g_i)_{i=1}^l$ is a module frame for $E$ as an $A$-module, then the canonical dual is given by $(h_i)_{i=1}^l$, where
		\begin{equation}
			h_j = \modft^{-1} g_j = g_j \bigg(\sum_{i=1}^{l} \rhs{g_i}{g_i} \bigg)^{-1}
		\end{equation}
		for all $j=1, \ldots, l$, and the canonical tight frame associated to $(g_i)_{i=1}^l$ is given by $(h'_i)_{i=1}^l$, where
		\begin{equation}
			h'_j = \modft^{-1/2} g_j = g_j \bigg(\sum_{i=1}^{l} \rhs{g_i}{g_i} \bigg)^{-1/2} 
		\end{equation}
		for all $j =1, \ldots ,l$.
		\item [iii)]  Suppose $\B \subset B$ is spectral invariant with the same unit, and that $(g_i)_{i=1}^l$ is a module frame for $E$ as an $A$-module with $g_i \in \E$ for all $i=1, \ldots, l$. Then $\modft^{-1} g_i \in \E$ and $\modft^{-1/2} g_i \in \E$ for all $i=1,\ldots ,l$. 
	\end{enumerate}
\end{prop}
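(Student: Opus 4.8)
The plan is to prove the three parts of \cref{prop:results-from-aujalu} by exploiting the algebraic identities satisfied in an equivalence bimodule, especially condition (3) of \cref{Definition: Equivalence bimodule}, which lets one convert expressions involving the left inner product into expressions involving the right inner product. The central observation is that for any $f \in E$ the frame operator can be rewritten as
\begin{equation*}
    \modft_{(g_i)} f = \sum_{i=1}^l \lhs{f}{g_i} g_i = \sum_{i=1}^l f \rhs{g_i}{g_i} = f \bigg( \sum_{i=1}^l \rhs{g_i}{g_i} \bigg),
\end{equation*}
where the middle equality is exactly the associativity relation $\blangle f, g_i \rangle g_i = f \langle g_i, g_i \brangle$. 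Thus the frame operator is simply right multiplication by the element $b := \sum_{i=1}^l \rhs{g_i}{g_i} \in B$, which is positive since each $\rhs{g_i}{g_i}$ is positive.

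For part (i), I would combine this identity with the proposition quoted earlier in the excerpt, namely that $(g_i)_{i=1}^l$ is a frame if and only if $\modft_{(g_i)}$ is invertible as an adjointable operator. Since $\modft_{(g_i)}$ is right multiplication by $b$ and $B$ is unital, invertibility of $\modft_{(g_i)}$ on $E$ should be equivalent to invertibility of $b$ in $B$: if $b$ is invertible then right multiplication by $b^{-1}$ provides a two-sided inverse, and conversely one uses fullness of the right inner product (condition (1)) to transfer invertibility of the operator back to invertibility of $b$. For part (ii), once we know $b$ is invertible, the formulas for the canonical dual and canonical tight frame follow by directly substituting $\modft_{(g_i)}^{-1} = (\,\cdot\,) b^{-1}$ and $\modft_{(g_i)}^{-1/2} = (\,\cdot\,) b^{-1/2}$ into \cref{defn:dual-and-tight-frame}, using that the continuous functional calculus applied to right multiplication by $b$ agrees with right multiplication by the corresponding function of $b$.

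Part (iii) is where the real content lies and where I expect the main obstacle. Here we are given $g_i \in \E$ and must show $\modft^{-1} g_i = g_i b^{-1}$ and $\modft^{-1/2} g_i = g_i b^{-1/2}$ remain in the pre-equivalence bimodule $\E$. The element $b = \sum_i \rhs{g_i}{g_i}$ lies in $\B$ because $\E$ is closed under the $\B$-valued inner product. The hypothesis that $\B$ is spectral invariant in $B$ with the same unit is precisely what guarantees that the inverse $b^{-1}$, which a priori only exists in $B$, actually lies in $\B$. For the square root $b^{-1/2}$ one needs a holomorphic-functional-calculus argument: spectral invariance of $\B$ typically propagates to stability under $C^\infty$ (or holomorphic) functional calculus, so that $b^{-1/2}$ also lands in $\B$. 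Finally, since $\E$ is a right $\B$-module, $g_i b^{-1} \in \E$ and $g_i b^{-1/2} \in \E$, completing the argument. The delicate point to get right is justifying that the square root and inverse computed via functional calculus in $B$ agree with those in $\B$; this is exactly the role of spectral invariance, and I would cite or adapt the relevant stability result for $\B$ under functional calculus.
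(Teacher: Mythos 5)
Your proposal is correct and follows essentially the same route as the paper: the key identity $\modft f = f\sum_{i}\rhs{g_i}{g_i}$ via the associativity axiom of the equivalence bimodule, transfer of invertibility between the frame operator and $b=\sum_i\rhs{g_i}{g_i}$ using unitality of $B$, and spectral invariance plus holomorphic functional calculus to place $b^{-1}$ and $b^{-1/2}$ in $\B$. The only difference is presentational — the paper outsources these identities to citations of \cite{AuJaLu19}, whereas you derive them directly — so there is nothing to correct.
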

\begin{proof}
	Statement i) is immediate by \cite[Remark 3.27]{AuJaLu19} and \cite[Proposition 3.20]{AuJaLu19}. Since the action of $\modft$ is implemented by right multiplication by $\sum_{i=1}^{l} \rhs{g_i}{g_i} $ by \cite{AuJaLu19}, it follows that  $\modft^{-1}$ is implemented by right multiplication by $(\sum_{i=1}^{l} \rhs{g_i}{g_i} )^{-1}$, and the action of $\modft^{-1/2}$ is implemented by right multiplication by $(\sum_{i=1}^{l} \rhs{g_i}{g_i} )^{-1/2}$. Hence statement ii) follows as well. In statement iii) the fact that $\modft^{-1} g_i \in \E$ for all $i=1, \ldots ,l$ is immediate by \cite[Remark 3.27]{AuJaLu19} and \cite[Proposition 3.21]{AuJaLu19}. But if $(\sum_{i=1}^{l} \rhs{g_i}{g_i})^{-1} \in \B$ by spectral invariance, so is $(\sum_{i=1}^{l} \rhs{g_i}{g_i} )^{-1/2}$. So it follows that $\modft^{-1/2} g_i \in \E$ for all $i=1,\ldots ,l$ also.
\end{proof}

\section{Smoothness in noncommutative geometry}
\label{Section: Smoothness in NCG}
We dedicate this section to presenting a notion of smoothness used in noncommutative geometry. Given a $C^*$-algebra $A$ we fix a spectral triple $(\A, \mathcal{H}, D)$ for $A$, where $\A\subset A$ is a dense $*$-subalgebra, $\mathcal{H}$ is a Hilbert space and $D: \mathcal{H} \to \mathcal{H}$ is a densely defined selfadjoint operator. 

The concept of regular spectral triples was introduced by Connes in \cite{Co95}, but we adopt the terminology of quantum $C^k$ spectral triples used in \cite{caphre11}.
\begin{defn}
	\label{def:QCk-structure-alg}
	Let $A$ be a $C^*$-algebra and let $(\A, \mathcal{H}, D)$ be a spectral triple for $A$. We say $(\A, \mathcal{H}, D)$ is \textit{quantum $C^n$}, or $QC^n$, $n \in \N$, if for all $a \in \A$ both $a$ and $[D,a]$ are in the domain of $\ad^n(\vert D \vert)$. Here $\ad^j (|D|)(a)$ is the $j$ times iterated commutator $[|D|,[|D|,\ldots,[|D|,a]\ldots]]$, $j\in \N$. If  $(\A, \mathcal{H}, D)$ is $QC^n$ for all $n \in \N$, we say it is $QC^{\infty}$. 
\end{defn}
With this definition we obtain a notion of smoothness on the $C^*$-algebra $A$, namely, for any $n \in \N$ we set
%\begin{equation}
%	Q\A_1 := \{a \in A \mid \textrm{$a \cdot \Dom D \subset \Dom D$ and $[D,a]$ extends to an element of $\B (\mathcal{H})$}\}
%\end{equation}
%and
\begin{equation}
	QA_n := \{a \in \A \mid \text{both $a$ and $[D,a]$ are in $\Dom (\ad^n(\vert D\vert))$}   \}.
\end{equation}
With a $QC^n$-structure on a $C^*$-algebra $A$ we can, for any Hilbert $A$-module $E$, specify natural $QC^n$-submodules.% for all $n \geq 1$. 
\begin{defn}
	\label{def:QCk-structure-mods}
	Let $A$ be a $C^*$-algebra equipped with a $QC^n$ spectral triple for some $n \geq 1$, and let $E$ be a left Hilbert $A$-module. Suppose there exists a uniformly norm bounded approximate unit $(e_m)_{m=1}^{\infty}$ for $E$, with
	\begin{equation}\label{eq:approx-id-form}
		e_m = \sum_{i=1}^m \Theta_{g_i,g_i}.
	\end{equation}
    Here $\Theta_{g,h}$ is the rank one module operator $\modft_{g,h}f = \lhs{f}{g} h$. We say $(E, (e_m)_{m=1}^{\infty})$ is a \textit{$QC^n$-$A$-module} if 
	$\lhs{g_i}{g_j} \in QA_n$ for all $i,j \in \{1,\ldots,m\}$ and all $m \in \N$. If $(E, (e_m)_{m=1}^{\infty})$ is a $QC^k$-$A$-module for all $k\in \N$, we say $(E, (e_m)_{m=1}^{\infty})$ is a \textit{$QC^{\infty}$-$A$-module}.
\end{defn}
The above definition is inspired by the definition of $C^k$-modules in \cite{Mes14}. %However, we do not take the operator algebra perspective as done in said article, but rather require the approximate identity to be uniformly norm bounded in $C^*$-norm. 

\section{Gabor analysis on LCA groups and weighted Feichtinger algebras}
\label{sec:Gab-An-LCA-weight-Feich-algs}
Before discussing the mathematical objects of interest we recall some central concepts from Gabor analysis on locally compact abelian (LCA) groups. 

Througout this section we fix a second countable LCA group $G$, and we will let $\La$ be a lattice in $\tfp{G}$, that is, $\La$ is a cocompact and discrete subgroup in $\tfp{G}$. Here $\widehat{G}$ is the dual group of $G$. The group $\tfp{G}$ is sometimes called the \emph{time-frequency plane of $G$} or the \emph{phase space of $G$}. We will have to restrict to lattices $\La$ as we wish to make use of the localization procedure developed in \cite{AuEn19} in a particular way. Namely, we need to be able to localize both the $C^*$-algebra $C^* (\La,c)$ and a Heisenberg module, both defined in \cref{Section:twisted-group-algebras-and-Ck}. 

Given $G$ and $\La$ we will need to make some choices regarding the Haar measures and how they relate to one another. The convention we will use is summed up in the following. 
\begin{conv}
	\label{conv:measure-choices}
	Given an LCA group $G$ we fix a Haar measure $\mu_G$ on $G$ and normalize the Haar measure $\mu_{\widehat{G}}$ on $\widehat{G}$ such that the Plancherel theorem holds. The lattice $\La$ will be equipped with the counting measure. On $(\tfp{G})/\La$ we put the Haar measure such that Weil's formula holds, that is, such that for all $f \in L^1 (G\times \widehat{G})$ we have
\begin{equation*}
\int_{G\times \widehat{G}} f(\xi) d\mu_{G\times \widehat{G}}(\xi) = \int_{(G\times \widehat{G})/\La} \int_{\La} f(\xi + \la ) d\mu_{\La}(\la) d\mu_{(G\times \widehat{G})/\La}(\Dot{\xi}), \quad \text{$\Dot{\xi} = \xi + \La$}.
\end{equation*}
\end{conv}
\begin{defn}
    The \emph{size of $\La$}, denoted $s(\La)$, is defined as
    \begin{equation*}
s(\La) = \int_{(G \times \widehat{G})/\La} 1 d\mu_{(G \times \widehat{G})/\La}.
\end{equation*}
\end{defn}
\begin{rmk}
    When $\La$ is a lattice it is in particular cocompact. Hence $(\tfp{G})/\La$ is compact, which implies $s(\La) < \infty$.
\end{rmk}
For any point $\xi = (x,\omega)\in \tfp{G}$ we define the \emph{time-frequency shift} $\pi (\xi)$ by
\begin{equation}
\label{eq:unitary-rep-of-La}
	\pi (\xi) = M_{\omega}T_x \colon L^2 (G) \to L^2 (G),
\end{equation}
where $T_x$ is the \emph{time-shift operator} given by
\begin{equation*}
	\begin{split}
	T_x &\colon L^2 (G) \to L^2 (G)\\
	f(t) &\mapsto f(t-x), \quad t\in G,
	\end{split}
\end{equation*}
and $M_\omega$ is the \emph{modulation operator}, or the \emph{frequency-shift operator}, given by
\begin{equation*}
	\begin{split}
	M_\omega &\colon L^2 (G) \to L^2 (G) \\
	f(t) &\mapsto \omega(t) f(t), \quad t\in G.
	\end{split}
\end{equation*}
We define the \emph{Heisenberg $2$-cocycle}
\begin{equation*}
	\begin{split}
	c \colon (\tfp{G}) \times (\tfp{G}) &\to \mathbb{T} \\
	(\xi_1 , \xi_2) &\mapsto \overline{\omega_2 (x_1)}
	\end{split}
\end{equation*}
for any two elements $\xi_1 = (x_1 , \omega_1), \xi_2 = (x_2 , \omega_2)\in \tfp{G}$. Moreover, we define the associated \emph{symplectic cocycle}
\begin{equation*}
	\begin{split}
	c_s \colon (\tfp{G}) \times (\tfp{G}) &\to \mathbb{T}\\
	(\xi_1 , \xi_2) &\mapsto \overline{\omega_2 (x_1)} \omega_1 (x_2).
	\end{split}
\end{equation*}
for $\xi_1 = (x_1 , \omega_1), \xi_2 = (x_2 , \omega_2)\in \tfp{G}$.
Make particular note of the fact that
\begin{equation*}
	\overline{c(\xi_1 , \xi_2)} = c(-\xi_1,\xi_2) = c(\xi_1, -\xi_2).
\end{equation*}
The $2$-cocycle and the symplectic cocycle are intimately related to time-frequency shifts. Indeed, routine calculations yield the following identities which may be helpful to keep in mind
\begin{equation*}
	\begin{split}
	\pi (\xi_1)\pi(\xi_2) &= c(\xi_1, \xi_2) \pi (\xi_1 + \xi_2)\\
	\pi (\xi_1)\pi (\xi_2) &= c_s (\xi_1 , \xi_2) \pi (\xi_2)\pi (\xi_1) \\
	\pi (\xi_1)^* &= c(\xi_1 , \xi_1) \pi (-\xi_1).
	\end{split}
\end{equation*}
Using the symplectic cocycle $c_s$ we define the \textit{adjoint subgroup of $\La$}, denoted $\Lao$, by
\begin{equation*}
	\Lao : = \{\chi \in \tfp{G} \mid c_s (\chi, \la) = 1 \quad \text{for all $\la \in \La$} \}.
\end{equation*}
It is then clear that $[\pi (\la) , \pi (\chi)] = 0$ for all $\la \in \La$ and all $\chi \in \Lao$. By \cite[p. 234]{JaLe16} we may identify  $\La^{\circ}$ with $((\tfp{G})/\La)^{\widehat{}}$ and we pick the dual measure on $\La^{\circ}$ corresponding to the measure on $(\tfp{G})/\La$ induced from the chosen measure on $\La$. That is, the measures are chosen so that the Plancherel theorem holds with respect to $\Lao$ and $(\tfp{G})/\La$. Since $\La$ is a lattice, it is in particular cocompact, hence it follows that $((\tfp{G})/\La)^{\widehat{}}$ is discrete, from which it follows that $\Lao$ is discrete. But $(\Lao)^\circ \cong \La$ is discrete, from which the analogous argument implies $\Lao$ is also cocompact. Hence $\Lao$ is also a lattice, and we may rightfully call it the \textit{adjoint lattice of $\La$}. Having picked the counting measure on $\La$, the induced measure on $\Lao$ is the counting measure scaled with the constant $s(\La)^{-1}$ \cite[equation (13)]{JaLu18}.

For any function $g \in L^2 (G)$ we may define the \textit{short-time Fourier transform with respect to $g$}. It is the operator 
\begin{equation}
	\begin{split}
	V_g \colon L^2 (G) \to L^2 (\tfp{G}) \\
	V_g f(\xi) = \hs{f}{\pi (\xi)g}.
	\end{split}
\end{equation}
%The short-time Fourier transform will play a major role in the sequel, in particular when defining the (weighted) Feichtinger algebras.
Using the short-time Fourier transform, we define the \textit{Feichtinger algebra} $M^1 (G)$ by
\begin{equation*}
M^1 (G) : = \{f \in L^2 (G) \mid V_f f \in L^1 (\tfp{G})  \}.
\end{equation*}
$M^1 (G)$ becomes a Banach space when equipped with the norm 
\begin{equation*}
\Vert f \Vert_{M^1 (G)} : = \int_{\tfp{G}} | V_g f (\xi) | \dif \xi
\end{equation*}
for some $g \in M^1 (G)\setminus \{0\}$. Indeed it is known that any nonzero $g \in M^1 (G)$ yields an equivalent norm on $M^1 (G)$. We may of course do the same for $\La$. It is however known that when $\La$ is discrete, $M^1 (\La) = \ell^1 (\La)$ with equivalent norms. For proofs of these statements, see for example \cite[Proposition 4.10, Lemma 4.11, Theorem 4.12]{ja18}.

In order to describe smoothness we will need dense subspaces of $M^1 (\La)$ and $M^1 (G)$. To this end we have the following definition. 
\begin{defn}
	\label{def:weight}
	Let $\Sub$ be a second countable LCA group. By a \textit{weight} on $\Sub$ we mean a function $v: \Sub \to [0,\infty)$ satisfying the following conditions:
	\begin{enumerate}
		\item [i)]$v(\xi + \chi) \leq v(\xi) v(\chi)$ for all $\xi, \chi \in \Sub$ (submultiplicativity).
		\item [ii)]$v$ has polynomial growth, i.e. there are $D > 0$ and $s >0$ such that $v(\xi)\leq D(1+d(\xi,0))^s$ for all $\xi \in \Sub$, where $d$ is a translation-invariant metric generating the topology of $\Sub$.
		\item [iii)]$v(\xi) = v(-\xi)$ for all $\xi\in \Sub$ (radial symmetry).
	\end{enumerate}
	A weight $v$ will be called \emph{commutator bounded} if it satisfies the following condition:
	\begin{enumerate}
		\item [iv)] For all $\xi, \chi \in \Sub$ we have $|v(\chi + \xi)-v(\chi)| \leq C_v v(\xi)$, for some $C_v \in [0,\infty)$ depending only on the weight $v$ (commutator bounded condition). 
	\end{enumerate} 
\end{defn}
\begin{rmk}
	\label{rmk:simple-conseqs-of-weight-def}
	If $v(0) = 0$, then for any $\xi \in \Sub$
	\begin{equation*}
	    v(\xi) = v(\xi + 0) \leq v(\xi) v(0)= 0,
	\end{equation*}
	hence the weight $v$ is identically zero. For this reason we will assume in the rest of the article that $v(0) \neq 0$.
	Note then that submultiplicativity of the weight $v$ implies $v(0)\geq 1$. Indeed, by the calculation
	\begin{equation*}
		v(0) = v(0+0) \leq v(0)v(0) = v(0)^2
	\end{equation*}
	we obtain the desired relation by dividing by $v(0)$ on both sides. But by radial symmetry we then have
	\begin{equation*}
		1 \leq v(0) = v(\xi - \xi) \leq v(\xi)v(-\xi) = v(\xi)^2
	\end{equation*}
	for all $\xi \in \Sub$. It follows that $v(\xi)\geq 1$ for all $\xi \in \Sub$.
\end{rmk}
\begin{rmk}
	Note that if $v$ is a weight, so is $v^r$ for $r \in [0,\infty)$. However, even if $v$ is a commutator bounded weight, $v^r$ need not be for $ r \neq 1$.
\end{rmk}
Some of the above assumptions in the definition of a weight are sometimes not present, in order to get a more general version of weights, see for example \cite{gr07}. In the interest of brevity we adopt the definition of weight above.
\begin{defn}
	Let $\Sub$ be a second countable LCA group and let $v$ be a weight on $\Sub$. We then define the \textit{weighted $L^1$-space} $L^1_v (\Sub)$ by 
	\begin{equation}
		L^1_v (\Sub) := \{ f \in L^1 (\Sub) \mid f\cdot v \in L^1 (\Sub)\}.
	\end{equation}
\end{defn}
It is well known that $L^1_v (\Sub)$ is a Banach space with the natural norm, that is, with the norm
	\begin{equation}
		\| f \|_{L^1_v(\Sub)} := \int_{\Sub} |f(\xi)| v(\xi) \dif \xi
	\end{equation}
	for $f \in L^1_v (\Sub)$.

We may then define the relevant subspaces of the Feichtinger algebra.
\begin{defn}
	Let $v: \tfp{G}\to [0,\infty)$ be a weight. We then define the \textit{weighted Feichtinger algebra} $M^1_v (G)$ by
	 \begin{equation}
	 	M^1_v (G) := \{f\in L^2 (G) \mid V_f f \in L^1_v (\tfp{G})   \}.
	 \end{equation}
	 We do the same for $\La$ by restricting weights from $\tfp{G}$ to $\La \subset \tfp{G}$.
\end{defn}
We have the following result from \cite[Theorem 4.1]{fe83-4}.
\begin{prop}
	$M^1_v (G)$ is a Banach space when equipped with the norm
	\begin{equation}
	\| f \|_{M^1_v (G)} := \int_{\tfp{G}} | V_g f (\xi)| v(\xi) \dif \xi,
	\end{equation}
	for some $g \in M^1_v (G)\setminus \{0\}$. Any $g \in M^1_v (G)\setminus \{0\}$ yields an equivalent norm.
\end{prop}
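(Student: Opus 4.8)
The plan is to derive everything from two ingredients: the reproducing-kernel property of the short-time Fourier transform and the fact that, for a submultiplicative weight, $L^1_v(\tfp{G})$ is a convolution algebra. For the first, I would start from the inversion formula: for $\psi, \gamma \in L^2(G)$ with $\hs{\gamma}{\psi} \neq 0$ one has $f = \hs{\gamma}{\psi}^{-1}\int_{\tfp{G}} V_\psi f(\eta)\,\pi(\eta)\gamma\,\dif\eta$ for every $f \in L^2(G)$ (this is where the Plancherel normalization of \cref{conv:measure-choices} enters). Substituting into $V_g f(\xi) = \hs{f}{\pi(\xi)g}$ and using the covariance identity $|V_g(\pi(\eta)\gamma)(\xi)| = |V_g\gamma(\xi-\eta)|$ yields the pointwise domination
\begin{equation}
\label{eq:pointwise-STFT}
	|V_g f(\xi)| \leq \frac{1}{|\hs{\gamma}{\psi}|}\big(|V_\psi f| * |V_g \gamma|\big)(\xi), \qquad \xi \in \tfp{G},
\end{equation}
for all $f, g \in L^2(G)$. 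For the second ingredient, submultiplicativity gives $v(\xi) \leq v(\xi-\eta)v(\eta)$, which upgrades Young's inequality to $\|F*H\|_{L^1_v(\tfp{G})} \leq \|F\|_{L^1_v(\tfp{G})}\|H\|_{L^1_v(\tfp{G})}$. Note that only conditions i) and iii) of \cref{def:weight} are used; the growth and commutator-bounded conditions play no role here.

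The key lemma is that cross-terms remain in $L^1_v$: if $g_1, g_2 \in M^1_v(G)$, then $V_{g_1}g_2 \in L^1_v(\tfp{G})$. I would first treat the case $\hs{g_1}{g_2} \neq 0$, where \eqref{eq:pointwise-STFT} with $f = g_2$, window $g = g_1$ and reconstruction pair $\psi = g_2$, $\gamma = g_1$ gives $|V_{g_1}g_2| \leq |\hs{g_1}{g_2}|^{-1}(|V_{g_2}g_2| * |V_{g_1}g_1|)$; both factors lie in $L^1_v(\tfp{G})$ by the very definition of $M^1_v(G)$, so weighted Young closes the estimate. The general case reduces to this one: since $g_1 \neq 0$, the shifts $\pi(\xi)g_1$ are total in $L^2(G)$, so $V_{g_1}g_2(\xi_0) = \hs{g_2}{\pi(\xi_0)g_1} \neq 0$ for some $\xi_0$; replacing $g_1$ by $\pi(\xi_0)g_1$ makes the inner product nonzero, while $|V_{\pi(\xi_0)g_1}g_2(\xi)| = |V_{g_1}g_2(\xi+\xi_0)|$ and $|V_{\pi(\xi_0)g_1}(\pi(\xi_0)g_1)| = |V_{g_1}g_1|$, so membership in $M^1_v(G)$ and the conclusion transfer back with weighted norms controlled by the factor $v(\xi_0)$, using the radial symmetry $v(-\xi_0) = v(\xi_0)$.

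With the lemma the norm equivalence is immediate: for $g_1, g_2 \in M^1_v(G)\setminus\{0\}$ and any $f \in M^1_v(G)$, \eqref{eq:pointwise-STFT} with reconstruction pair $\psi = \gamma = g_2$ together with weighted Young gives $\|V_{g_1}f\|_{L^1_v} \leq \|g_2\|_{L^2}^{-2}\|V_{g_1}g_2\|_{L^1_v}\|V_{g_2}f\|_{L^1_v}$, where $\|V_{g_1}g_2\|_{L^1_v} < \infty$ by the lemma; swapping $g_1$ and $g_2$ gives the reverse bound. For completeness I would fix one window $g$ and note that, up to the constant $\|g\|_{L^2}^{-2}$, the map $V_g$ is an isometry of $(M^1_v(G),\|\cdot\|_{M^1_v})$ onto its range in $L^1_v(\tfp{G})$; definiteness of the norm follows from Moyal's identity $\|V_g f\|_{L^2} = \|g\|_{L^2}\|f\|_{L^2}$, which forces $f = 0$ when $V_g f = 0$. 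The range of $V_g$ is exactly the set of $F \in L^1_v(\tfp{G})$ fixed by twisted convolution (with the cocycle $c$) against the reproducing kernel $\|g\|_{L^2}^{-2}V_g g$; since this kernel lies in $L^1_v(\tfp{G})$ and $|F \,\natural\, H| \leq |F| * |H|$ pointwise, that operation is a bounded idempotent on $L^1_v(\tfp{G})$, so its range is closed. Thus $M^1_v(G)$ is isomorphic to a closed subspace of the Banach space $L^1_v(\tfp{G})$, hence itself a Banach space.

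The main obstacle I anticipate is the cross-term lemma, and specifically removing the hypothesis $\hs{g_1}{g_2} \neq 0$: this is the one place where totality of the time-frequency shifts must be combined with the translation behaviour of the weighted norm, and it is precisely what makes the definition of $M^1_v(G)$ through the diagonal $V_f f$ (rather than a cross-STFT) usable. Everything else is a routine assembly of \eqref{eq:pointwise-STFT} with the weighted Young inequality.
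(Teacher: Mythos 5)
The paper does not actually prove this proposition; it is imported wholesale from \cite[Theorem 4.1]{fe83-4}, so there is no internal proof to compare against. Your argument is the standard one (it is essentially Feichtinger's, and the one in Gr\"ochenig's book): the pointwise domination $|V_{g}f|\le|\hs{\gamma}{\psi}|^{-1}\,|V_\psi f|*|V_{g}\gamma|$ combined with the weighted Young inequality, the cross-term lemma giving window independence, and the shift trick disposing of the case $\hs{g_1}{g_2}=0$; all of these steps check out, including the constants. The only place where you assert rather than argue is in the completeness step: you need that $V_g(M^1_v(G))$ \emph{equals}, not merely sits inside, the fixed-point set of the idempotent $F\mapsto F\,\natural\,\|g\|_2^{-2}V_gg$ on $L^1_v(\tfp{G})$, since a subset of a closed subspace need not itself be closed. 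That equality is true but requires the synthesis operator $F\mapsto\|g\|_2^{-2}\int F(\eta)\pi(\eta)g\,\dif\eta$ (well defined from $L^1_v\subset L^1$ into $L^2(G)$) together with a converse to your cross-term lemma, namely that $V_gf\in L^1_v(\tfp{G})$ with $g\in M^1_v(G)\setminus\{0\}$ forces $V_ff\in L^1_v(\tfp{G})$; this follows from $|V_ff|\le\|g\|_2^{-2}\,|V_gf|*|V_fg|$ and $|V_fg(\xi)|=|V_gf(-\xi)|$ via the radial symmetry of $v$, i.e.\ from the same two ingredients you already have. With that sentence supplied the proof is complete.
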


Note that for a weight $v$ of polynomial growth on $\tfp{G}$ the Banach space $M^1_v (G)$ is dense in $M^1 (G)$, because the Schwartz-Bruhat space is dense in $M^1(G)$ by \cite{Po80} and by Osborne's characterization of the Schwartz-Bruhat space \cite{Os75}. 

In \cref{sec:Link-Gab-an} we will link the $QC^k$-structure statements for Heisenberg modules of \cref{section:modulation-spaces-as-smooth-modules} with the study of Gabor frames. To this end we introduce the relevant concepts from Gabor analysis now.
\begin{defn}
	\label{Definition: Gabor system}
	A \textit{Gabor system} $\mathcal{G}( g; \Lambda)$ is a collection of time-frequency shifts of a function $g$ of the form $\{ \pi(\lambda)g \mid \lambda \in \Lambda\}$. We call it a \emph{Gabor frame for $L^2 (G )$} if it is a frame for the Hilbert space $L^2 (G)$, that is, if the following inequalities are satisfied for all $f\in L^2 (G)$
	\begin{equation}
	\label{Equation: Frame definition}
	C \vert \vert f \vert \vert_2^2 \leq \sum_{\lambda \in \Lambda} \vert \hs{f}{\pi(\la)g}  \vert^2 \leq D \vert \vert f \vert \vert_2^2 ,
	\end{equation}
	for some $0<C \leq D<\infty$. If $C=D=1$, we call $\mathcal{G}( g; \Lambda)$ a \emph{Parseval Gabor frame}. If only the upper frame bound is satisfied, we say  $\mathcal{G}( g; \Lambda)$ is a \emph{Bessel system}, and the function $g$ is called a \emph{Bessel vector for $\La$}.
	
	Extending to the case where we have functions $g_1 ,\ldots , g_l \in L^2 (G)$, we define a multiwindow Gabor system by $\mathcal{G} (g_1 , \ldots , g_l; \Lambda) := \mathcal{G}(g_1 ; \Lambda) \cup \cdots \cup \mathcal{G}(g_l ; \Lambda)$. We call it a \emph{multiwindow Gabor frame for $L^2 (G)$} if there exist constants $0<C \leq D<\infty$ such that
	\begin{equation}
	\label{Equation: Frame condition multiwindow}
	C \vert \vert f \vert \vert_2^2 \leq \sum_{i=1}^l \sum_{\lambda \in \Lambda} \vert \hs{f}{\pi (\la)g_i} |^2 \leq D \vert \vert f \vert \vert_2^2 
	\end{equation}
	for all $f\in L^2 (G)$. Again, if $C=D=1$ we call $\mathcal{G} ( g_1 , \ldots , g_l ; \Lambda)$ a \emph{Parseval multiwindow Gabor frame}. If only the upper frame bound is satisfied, we say $\mathcal{G} ( g_1 , \ldots , g_l ; \Lambda)$ is a \emph{Bessel system}, and the functions $\{g_1, \ldots , g_l \}$ are called \emph{Bessel vectors for $\La$}.
\end{defn}
Intimately related to Bessel systems $\GS(g;\La)$ are the \textit{coefficient mapping}
\begin{equation}
C_{g, \Lambda}: L^2 (G) \to \ell^2 (\Lambda), \quad f \mapsto \{\hs{f}{\pi (\la)g}\}_{\lambda \in \La} ,
\end{equation}
and the \textit{synthesis mapping}
\begin{equation}
D_{g, \Lambda}: \ell^2 (\Lambda) \to L^2 (G), \quad \{c_{\lambda}\}_{\lambda} \mapsto \sum_{\lambda \in \Lambda} c_{\lambda} \pi (\lambda) g
\end{equation}
A straightforward calculation shows that $D_{g, \Lambda} = C_{g, \Lambda}^*$. These allow us to define the following operator.
\begin{defn}
	For a Bessel system $\mathcal{G}( g ; \Lambda)$ we define the \emph{Gabor frame operator} $S_{g, \Lambda}$ by
	\begin{equation}
	S_{g, \Lambda}: L^2 (G) \to L^2 (G), \quad S_{g, \Lambda} = D_{g, \Lambda} \circ C_{g, \Lambda}.
	\end{equation}
	Likewise, given a multiwindow Bessel system $\mathcal{G} ( g_1 , \ldots , g_l ; \Lambda)$, we define the \emph{multiwindow Gabor frame operator} $S_{g_1, \ldots, g_l , \Lambda}$ by
	\begin{equation}
	S_{g_1, \ldots, g_l , \Lambda} : L^2 (G) \to L^2 (G) , \quad S_{g_1, \ldots, g_l , \Lambda} = \sum_{i=1}^l S_{g_i , \Lambda}.
	\end{equation}
\end{defn}
Note that boundedness of the (multiwindow) Gabor frame operator is guaranteed by the upper norm bounds in \eqref{Equation: Frame definition} and \eqref{Equation: Frame condition multiwindow}. If $\GS(g_1, \ldots , g_l ; \La)$ is a frame, the corresponding lower bound guarantees that the (multiwindow) Gabor frame operator is invertible. Also, since $S_{g, \Lambda} = C_{g, \Lambda}^* \circ C_{g, \Lambda}$, the Gabor frame operator is positive and thus the multiwindow Gabor frame operator is positive, too. Hence for a Gabor frame $\mathcal{G} (g; \Lambda)$ (resp. a multiwindow Gabor frame $\mathcal{G} (g_1 , \ldots g_l ; \Lambda)$) the corresponding Gabor frame operator $S_{g, \Lambda}$ (resp. multiwindow Gabor frame operator $S_{g_1, \ldots g_l , \Lambda}$) is a bounded, positive, and invertible operator. 
Indeed, the converse is true also, and we will need the following well-known result.
\begin{prop}
	Let $\La$ be a lattice in $\tfp{G}$ where $G$ is a second countable LCA group and let $\{g_1, \ldots , g_l \}$ be Bessel vectors for $\La$. Then $\GS (g_1 , \ldots , g_l; \La)$ is a frame for $L^2 (G)$ if and only if the frame operator $S_{g_1, \ldots, g_l , \Lambda}$ is invertible.
\end{prop}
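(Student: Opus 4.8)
The plan is to translate the two-sided frame inequality \eqref{Equation: Frame condition multiwindow} into an operator inequality for the frame operator and then invoke the elementary fact that a bounded positive operator is invertible precisely when its spectrum is bounded away from zero. Write $S := S_{g_1, \ldots, g_l , \Lambda}$. First I would record that, since $S_{g_i , \Lambda} = C_{g_i, \Lambda}^* C_{g_i, \Lambda}$, for every $f \in L^2(G)$ we have
\[
\hs{S f}{f} = \sum_{i=1}^l \hs{C_{g_i,\Lambda}^* C_{g_i,\Lambda} f}{f} = \sum_{i=1}^l \| C_{g_i,\Lambda} f\|_2^2 = \sum_{i=1}^l \sum_{\lambda \in \Lambda} | \hs{f}{\pi(\la) g_i}|^2 .
\]
Thus the middle term of \eqref{Equation: Frame condition multiwindow} equals $\hs{S f}{f}$, and the frame condition is exactly the assertion that $C \, \Id \leq S \leq D \, \Id$ in the order of self-adjoint operators for some $0 < C \leq D < \infty$. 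As observed before the statement, the Bessel hypothesis guarantees that $S$ is bounded, while $S$ is positive and self-adjoint because it is a finite sum of operators of the form $C_{g_i,\Lambda}^* C_{g_i,\Lambda}$.

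For the ``only if'' direction, suppose $\GS(g_1,\ldots,g_l;\La)$ is a frame. The lower frame bound gives $\hs{Sf}{f} \geq C\|f\|_2^2$ with $C>0$, so $S$ is bounded below; combined with self-adjointness this forces $0 \notin \spec(S)$, hence $S$ is invertible. For the ``if'' direction, suppose $S$ is invertible. Boundedness of $S$ already yields the upper estimate $\hs{Sf}{f} \leq \|S\| \, \|f\|_2^2$, so one may take $D = \|S\|$. Since $S$ is positive and invertible, its spectrum is a compact subset of $(0,\infty)$; more precisely $\spec(S) \subseteq [\,\|S^{-1}\|^{-1}, \|S\|\,]$, so that $S \geq \|S^{-1}\|^{-1}\,\Id$ by the spectral theorem. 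This supplies the lower bound with $C = \|S^{-1}\|^{-1} > 0$, and \eqref{Equation: Frame condition multiwindow} holds.

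The only point that requires genuine care is the passage from ``invertible'' to a strictly positive lower bound, namely identifying $\|S^{-1}\|^{-1}$ with the infimum of $\spec(S)$ and concluding $S \geq \|S^{-1}\|^{-1}\,\Id$; this is where one invokes the spectral theory of the single bounded positive self-adjoint operator $S$. Beyond this standard fact there is no substantial obstacle, since the whole argument reduces to the observation that the two frame bounds are precisely the statement that $\spec(S)$ is contained in a compact subinterval of $(0,\infty)$.
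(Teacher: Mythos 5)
Your proof is correct: the identity $\hs{Sf}{f}=\sum_{i=1}^l\sum_{\la\in\La}|\hs{f}{\pi(\la)g_i}|^2$ reduces the frame inequalities to $C\,\Id\leq S\leq D\,\Id$, and the spectral theorem for the bounded positive self-adjoint operator $S$ handles both directions. The paper gives no proof at all — it sketches the "only if" direction in the preceding discussion and cites the converse as well-known — and your argument is precisely the standard one being invoked there.
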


\section{Twisted group $C^*$-algebras and $QC^k$-structures}
\label{Section:twisted-group-algebras-and-Ck}
\subsection{Twisted group $C^*$-algebras}
\label{section:twisted-group-algebras}
We proceed to introduce the relevant Banach $*$-algebras and $C^*$-algebras. As in \cref{sec:Gab-An-LCA-weight-Feich-algs} we let $G$ denote a second countable locally compact abelian group and let $\La \subset \tfp{G}$ be a lattice. Furthermore, $v$ will be a weight on $\tfp{G}$, and $c$ denotes the Heisenberg $2$-cocycle. Indeed, in the rest of the paper $c$ will denote this $2$-cocycle. We then wish to study the \textit{$v$-weighted $c$-twisted group algebra} $\ell^1_v (\La,c)$. This is the space $\ell^1_v (\La)$ equipped with $c$-twisted convolution
\begin{equation}
	a_1 *_c a_2 (\la) = \sum_{\la' \in \La} a_1 (\la') a_2 (\la - \la') c(\la', \la - \la') 
\end{equation}
and $c$-twisted involution
\begin{equation}
	a^* (\la) = c(\la,\la)\overline{a(-\la)}
\end{equation}
for $a,a_1,a_2 \in \ell^1_v (\La)$ and $\la \in \La$. The unweighted space, that is, with weight $v=1$, will be denoted $\ell^1 (\La,c)$. 
\begin{rmk}
    We will sometimes suppress the notation $*_c$ and just write $a_1 a_2$ instead of $a_1 *_c a_2$.
\end{rmk}
The following result is well known. %For the reader's convenience we include a proof that the corresponding statement for the weighted algebra is true.
\begin{prop}
	$\ell^1_v (\La,c)$ is a Banach $*$-algebra. 
\end{prop}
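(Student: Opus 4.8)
The plan is to check that twisted convolution and twisted involution are well defined on $\ell^1_v(\La)$, that they obey the algebraic axioms of a $*$-algebra, and that they are compatible with the norm in the sense that $*_c$ is submultiplicative and the involution isometric. Completeness is already available since $\ell^1_v(\La)$ is a Banach space, so all the real work lies in the norm estimates and in some routine cocycle bookkeeping.

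First I would establish submultiplicativity. For $a_1, a_2 \in \ell^1_v(\La)$,
\begin{align*}
\|a_1 *_c a_2\|_{\ell^1_v} &= \sum_{\la \in \La} \Big| \sum_{\la' \in \La} a_1(\la')\, a_2(\la - \la')\, c(\la', \la - \la') \Big|\, v(\la) \\
&\leq \sum_{\la \in \La} \sum_{\la' \in \La} |a_1(\la')|\, |a_2(\la - \la')|\, v(\la),
\end{align*}
where the inequality uses that $c$ is $\mathbb{T}$-valued, so $|c(\la', \la - \la')| = 1$. Applying submultiplicativity of the weight, $v(\la) \leq v(\la')\, v(\la - \la')$, together with Tonelli's theorem and the substitution $\mu = \la - \la'$, bounds the right-hand side by $\|a_1\|_{\ell^1_v} \|a_2\|_{\ell^1_v}$. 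Because $v \geq 1$ forces $\ell^1_v(\La) \subset \ell^1(\La)$, the defining sums converge absolutely, so this computation simultaneously shows $a_1 *_c a_2 \in \ell^1_v(\La)$ and that the norm is submultiplicative.

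For the involution I would compute $\|a^*\|_{\ell^1_v} = \sum_{\la} |c(\la, \la)|\, |a(-\la)|\, v(\la) = \sum_{\la} |a(-\la)|\, v(\la)$, and then radial symmetry $v(\la) = v(-\la)$ with the substitution $\mu = -\la$ gives $\|a^*\|_{\ell^1_v} = \|a\|_{\ell^1_v}$; in particular the involution maps $\ell^1_v(\La)$ to itself isometrically.

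It remains to verify the purely algebraic axioms, namely associativity of $*_c$, the involutivity $(a^*)^* = a$, and the anti-multiplicativity $(a_1 *_c a_2)^* = a_2^* *_c a_1^*$. These follow formally from the $2$-cocycle identity satisfied by $c$ (a consequence of $\pi(\xi_1)\pi(\xi_2) = c(\xi_1, \xi_2)\pi(\xi_1 + \xi_2)$ and associativity of operator composition) together with the stated relations $\overline{c(\xi_1, \xi_2)} = c(-\xi_1, \xi_2) = c(\xi_1, -\xi_2)$, and are identical to the computations for the unweighted algebra $\ell^1(\La, c)$; the weight plays no role here. I expect the only mildly delicate point to be organizing these cocycle manipulations cleanly, since the genuinely weight-specific content is confined to the two norm estimates above, where submultiplicativity and radial symmetry of $v$ are precisely the hypotheses that make everything close up.
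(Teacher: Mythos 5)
Your proof is correct. The paper states this proposition without proof, labelling it as well known; your argument — submultiplicativity of the norm from submultiplicativity of $v$, isometry of the involution from radial symmetry and $|c|=1$, and the algebraic axioms from the $2$-cocycle identity exactly as in the unweighted case — is precisely the standard argument the paper is implicitly invoking.
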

%\begin{proof}
%	Since we already know that $\ell^1 (\La,c)$ is a Banach $*$-algebra, it suffices to prove that the involution is isometric and that the norm is submultiplicative when accounting for the weight.
	
%	Since the weight $v$ is radially symmetric we have for any $a \in \ell^1_v (\La,c)$
%	\begin{equation*}
%		\begin{split}
%		\| a^*\|_{\ell^1_v} &= \sum_{\la \in \La} |a^* (\la) | v(\la) = \sum_{\la \in \La} | c(\la,\la) \overline{a(-\la)}| v(\la) \\
%		&= \sum_{\la\in \La} |a(-\la)| v(\la) = \sum_{\la \in \La} |a (\la)| v(-\la)\\
%		&= \sum_{\la \in \La} | a(\la)| v(\la) = \| a \|_{\ell^1_v},
%		\end{split}
%	\end{equation*}
%	so it follows that the involution is an isometry. With $a_1, a_2 \in \ell^1_v (\La,c)$ it follows by submultiplicativity of the weight $v$ that
%	\begin{equation*}
%	\begin{split}
%	\| a_1 \natural a_2 \|_{\ell^1_v} &= \sum_{\la \in \La} | a_1 \natural a_2 (\la) | v(\la)\\
%	& = \sum_{\la \in \La} | \sum_{\mu \in \La} a_1 (\mu) a_2 (\la - \mu) c(\mu,\la-\mu) | v(\la) \\
%	&\leq \sum_{\la \in \La} \sum_{\mu \in \La} | a_1 (\mu) | a_2 (\la - \mu)| | c(\mu, \la - \mu)| v(\la) \\
%	&= \sum_{mu\in \La} \sum_{\la \in \La} |a_1 (\mu)| |a_2 (\la -mu)| v(\la) \\
%	&= \sum_{\mu\in \La} \sum_{\la \in \La} |a_1 (\mu)| |a_2 (\la)| v(\la + \mu) \\
%	&\leq  \sum_{\mu\in \La} \sum_{\la \in \La} |a_1 (\mu)| |a_2 (\la)| v(\la)v(\mu) \\
%	&= \sum_{\mu \in \La} |a_1 (\mu)|v(\mu) \sum_{\la \in \La} |a_2 (\la)| v(\la) \\
%	&= \| a_1 \|_{\ell^1_v} \|a_2\|_{\ell_v^1}, 
%	\end{split}
%	\end{equation*}
%	which shows that the norm is submultiplicative.
%\end{proof}
We may do the same for $\ell^1_v (\Lao)$ to make $\ell^1_v (\Lao, \overline{c})$. Note the conjugate cocycle. 

There is a natural way of associating to $\ell_v^1 (\La,c)$ a $C^*$-algebra. Indeed, we do the procedure for $\ell^1 (\La,c)$ to complete it to a $C^*$-algebra, and it will be clear that by density of $\ell_v^1 (\La,c)$ in $\ell^1 (\La,c)$ we would obtain the same $C^*$-algebra if we were to do the same procedure with $\ell_v^1 (\La,c)$. 
The procedure is as follows. We have a $c$-projective unitary representation of $\La$ on $L^2 (G)$ via \eqref{eq:unitary-rep-of-La}. This gives a nondegenerate $c$-projective $*$-representation of $\ell^1 (\La)$, or equivalently, a nondegenerate $*$-representation of $\ell^1 (\La,c)$, on $L^2 (G)$ by setting
\begin{equation*}
	a \cdot f = \sum_{\la \in \La} a(\la) \pi (\la)f
\end{equation*}
for $f \in L^2 (G)$ and $a \in \ell^1 (\La)$. This representation is faithful \cite{Ri88}. We thus obtain a $C^*$-algebra %$C^* (\La,c)$ 
by completing $\ell^1 (\La,c)$ in the norm $ \| a \|_{\mathbb{B}(L^2 (G))}$ for $a \in \ell^1 (\La,c)$. %This is known as the \textit{reduced $c$-twisted group $C^*$-algebra of $\La$}. 
But $\La$ is an abelian group, hence it is amenable. So the reduced and full $c$-twisted group $C^*$-algebras of $\La$ coincide, and so we may denote the common $C^*$-completion of $\ell^1 (\La,c)$ by $C^* (\La,c)$, and denote the norm by $\| \cdot \|_{C^*}$. We refer to this $C^*$-algebra as the \textit{$c$-twisted group $C^*$-algebra of $\La$}. Since $\ell_v^1 (\La,c)$ is dense in $\ell^1 (\La,c)$ and $\| \cdot \|_{\ell^1_v} \geq \| \cdot \|_{\ell^1} \geq \| \cdot \|_{C^*}$, we would obtain the same $C^*$-algebra by doing the procedure with $\ell_v^1 (\La,c)$. We do the same procedure for $\ell_v^1 (\Lao,\overline{c})$ and obtain $C^* (\Lao, \overline{c})$. The $C^*$-algebras $C^* (\La,c)$ and $C^* (\Lao, \overline{c})$ are closely related. Indeed, they are Morita equivalent, which we will discuss in section \cref{section:modulation-spaces-as-modules}, and will have use for in \cref{section:modulation-spaces-as-smooth-modules}.

To show that weighted Feichtinger algebras are examples of $QC^k$-modules in \cref{section:modulation-spaces-as-smooth-modules} we will show how certain module frames implement said $QC^k$-structure. It will then be important that the module frames are suitably regular. To guarantee this we need the following important result from \cite{GrLe04}.
\begin{prop}\label{prop:GrLe-spec-inv}
	Let $G$ be a second countable LCA group, let $\La \subset \tfp{G}$ a discrete subset, and let $v$ be a weight on $\tfp{G}$. Then $\ell^1_v (\La,c)$ is spectral invariant in $C^* (\La ,c)$.
\end{prop}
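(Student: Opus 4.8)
The plan is to establish that $\ell^1_v(\La,c)$ is inverse-closed in $C^*(\La,c)$: if $a \in \ell^1_v(\La,c)$ is invertible as an element of $C^*(\La,c)$, then its inverse already lies in $\ell^1_v(\La,c)$. Since $\ell^1_v(\La,c)$ is a Banach $*$-algebra sitting densely and norm-decreasingly inside $C^*(\La,c)$ (using the chain $\|\cdot\|_{\ell^1_v} \geq \|\cdot\|_{\ell^1} \geq \|\cdot\|_{C^*}$ noted in \cref{section:twisted-group-algebras}) with the same unit $\delta_0$, this is exactly the assertion that the two algebras have identical spectra for elements of the smaller one.

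The crucial reduction is to the Gelfand--Raikov--Shilov (GRS), or sub-exponential growth, condition on the weight, namely that $\lim_{n\to\infty} v(n\xi)^{1/n} = 1$ for every $\xi \in \tfp{G}$. I would verify this directly from the hypotheses in \cref{def:weight}. Since $d$ is translation invariant, the triangle inequality gives $d(n\xi,0) \leq n\,d(\xi,0)$, so polynomial growth (condition ii) yields $v(n\xi)^{1/n} \leq D^{1/n}\big(1 + n\,d(\xi,0)\big)^{s/n}$, which tends to $1$. As $v(\xi) \geq 1$ for all $\xi$ by \cref{rmk:simple-conseqs-of-weight-def}, we also have $v(n\xi)^{1/n} \geq 1$, and the GRS limit follows by squeezing.

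With the GRS condition in hand, the conclusion is precisely the content of the twisted Wiener's lemma of Gr\"ochenig--Leinert, and the cleanest route is to invoke it directly. To make the argument self-contained one could instead proceed via Hulanicki's lemma: $\ell^1_v(\La,c)$ carries the faithful $*$-representation $\pi$ on $L^2(G)$ defined in \cref{section:twisted-group-algebras}, so it suffices to show that for every self-adjoint $a \in \ell^1_v(\La,c)$ the spectral radius computed in $\ell^1_v(\La,c)$ agrees with the operator norm $\|a\|_{C^*}$. One inequality is automatic from the norm-decreasing inclusion; the substantive direction is the bound of the $\ell^1_v$-spectral radius by the $C^*$-norm, obtained by iterating $a \mapsto a^{2^k}$ and controlling the growth of $\|a^{2^k}\|_{\ell^1_v}$ by means of the sub-exponential weight. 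Symmetry of $\ell^1_v(\La,c)$---which holds because $\La$ is abelian and the GRS condition is met---ensures the spectrum of each self-adjoint element is real, so that Hulanicki's criterion applies and inverse-closedness follows.

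The main obstacle is precisely this spectral-radius identity. The twisted convolution product obstructs the naive commutative Fourier-analytic estimates one would use for an ordinary weighted $\ell^1$ group algebra, and it is exactly to circumvent this that the Gr\"ochenig--Leinert machinery (or a symmetry/Hulanicki argument carefully adapted to the cocycle $c$) is required. Once the GRS condition has been checked, however, every remaining step is either a citation of, or a faithful transcription of, that established theory.
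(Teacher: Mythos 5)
Your proposal is correct and follows essentially the same route as the paper, which simply cites this as a known consequence of the Gr\"ochenig--Leinert twisted Wiener lemma without giving a proof. Your explicit verification that polynomial growth of the weight forces the GRS condition $\lim_n v(n\xi)^{1/n}=1$ is a worthwhile addition, since that hypothesis is what makes the citation legitimate.
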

\subsection{Weighted Feichtinger algebras as modules}
\label{section:modulation-spaces-as-modules}
In order to get the desired modules we will need the following result, see \cite[Proposition 5.1, Proposition 5.2]{FeGr89}. The arguments for $G=\mathbb{R}$ extend in a straightforward way to the general case.
\begin{prop}
	\label{prop:cty-and-sampling-props}
Let $\La \subset \tfp{G}$ be a lattice and let $v$ be a weight on $\tfp{G}$. Then the following hold:
\begin{enumerate}
	\item [i)] Let $\la \in \La$ and $f \in M_v^1 (G)$. Then $\pi (\la)f \in M_v^1 (G)$ and
	\begin{equation*}
		\| \pi (\la)f \|_{M^1_v} \leq v(\la) \| f\|_{M^1_v}.
	\end{equation*}
	\item [ii)] If $a \in \ell_v^1 (\La)$ and $f \in M_v^1 (G)$, then $	\sum_{ \la \in \La} a(\la)\pi(\la)f \in M_v^1 (G)$ and
	\begin{equation*}
	\| \sum_{ \la \in \La} a(\la)\pi(\la)f \|_{M_v^1} \leq C \| a \|_{\ell_v^1} \| f \|_{M_v^1}
	\end{equation*}
	for some $C > 0 $ independent of $a$ and $f$.
	\item [iii)] If $f,g \in M_v^1 (G)$, then $(\hs{f}{\pi(\la)g})_{\la \in \La} \in \ell_v^1 (\La)$.
\end{enumerate}	
\end{prop}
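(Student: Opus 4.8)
The plan is to prove the three parts in order. I fix once and for all a nonzero Schwartz--Bruhat window $\phi\in M^1_v(G)$ and use the norm $\|f\|_{M^1_v}=\int_{\tfp{G}}|V_\phi f(\xi)|\,v(\xi)\,\dif\xi$. Part i) is the core computation and follows from the covariance of the short-time Fourier transform together with submultiplicativity of $v$; part ii) is then a formal consequence of i), the triangle inequality, and completeness of $M^1_v(G)$; and part iii), which I regard as the main obstacle, is a genuine sampling statement and requires the amalgam-space regularity of $V_g f$.

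For i), I would first compute $V_\phi(\pi(\la)f)$ using the relations $\pi(\la)^*=c(\la,\la)\pi(-\la)$ and $\pi(-\la)\pi(\xi)=c(-\la,\xi)\pi(\xi-\la)$ listed above. These give
\[
V_\phi(\pi(\la)f)(\xi)=\hs{f}{\pi(\la)^*\pi(\xi)\phi}=\tau(\la,\xi)\,V_\phi f(\xi-\la)
\]
for some unimodular $\tau(\la,\xi)\in\T$, so that $|V_\phi(\pi(\la)f)(\xi)|=|V_\phi f(\xi-\la)|$; in particular $\pi(\la)f\in M^1_v(G)$. Substituting $\eta=\xi-\la$ (the Haar measure on $\tfp{G}$ is translation invariant) and using $v(\eta+\la)\le v(\eta)v(\la)$ gives
\[
\|\pi(\la)f\|_{M^1_v}=\int_{\tfp{G}}|V_\phi f(\eta)|\,v(\eta+\la)\,\dif\eta\le v(\la)\,\|f\|_{M^1_v},
\]
which is i). Part ii) is then immediate: the estimate $\sum_{\la\in\La}\|a(\la)\pi(\la)f\|_{M^1_v}\le\|f\|_{M^1_v}\sum_{\la\in\La}|a(\la)|v(\la)=\|a\|_{\ell^1_v}\|f\|_{M^1_v}<\infty$ shows the series $\sum_{\la}a(\la)\pi(\la)f$ converges absolutely in the Banach space $M^1_v(G)$, and the stated bound is the triangle inequality (with $C=1$ for this choice of window, a general $C$ absorbing the equivalence of the $M^1_v$-norms).

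The work is in iii). Writing $\hs{f}{\pi(\la)g}=V_g f(\la)$, the claim is $\sum_{\la\in\La}|V_g f(\la)|\,v(\la)<\infty$: the lattice samples of $V_g f$ must be weighted-summable, which is strictly stronger than $V_g f\in L^1_v(\tfp{G})$, since pointwise values of an $L^1$-function need not be defined. The key fact is that for $f,g\in M^1_v(G)$ the transform $V_g f$ lies in the Wiener amalgam space $W(L^\infty,L^1_v)(\tfp{G})$ of functions whose local suprema are weighted-summable. To establish this I would use that $V_\phi\phi$ decays rapidly, hence lies in $W(L^\infty,L^1_v)$, and then propagate this regularity via the reproducing formula and the pointwise convolution estimate $|V_g f|\le\|\phi\|_2^{-2}\,|V_\phi f|*|V_g\phi|$, using $|V_\phi f|\in L^1_v$ and the convolution relation $L^1_v*W(L^\infty,L^1_v)\subseteq W(L^\infty,L^1_v)$ (valid because $v$ is submultiplicative). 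Finally, to pass from amalgam regularity to $\ell^1_v$ I would cover $\tfp{G}$ by translates of a relatively compact fundamental domain $Q$ for $\La$, bound $|V_g f(\la)|$ by $\sup_{\la+Q}|V_g f|$, and sum against $v(\la)$; submultiplicativity and polynomial growth of $v$ make $v(\la)$ comparable to the values of $v$ on the bounded cell $\la+Q$, so that $\sum_{\la}|V_g f(\la)|\,v(\la)\lesssim\|V_g f\|_{W(L^\infty,L^1_v)}<\infty$. Verifying the amalgam-space membership of $V_g f$ is the technical heart of the proof; the sampling step is then routine bookkeeping.
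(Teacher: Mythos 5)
Your proposal is correct and follows essentially the same route as the paper, which gives no argument of its own but defers to \cite[Propositions 5.1, 5.2]{FeGr89}: parts i) and ii) via the covariance of the short-time Fourier transform and submultiplicativity of $v$, and part iii) via the amalgam-space membership $V_g f \in W(L^\infty, L^1_v)$ obtained from the reproducing-formula convolution estimate, followed by lattice sampling. You correctly identify iii) as the only nontrivial point and supply exactly the coorbit-theoretic argument the citation points to.
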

We may now give $M_v^1 (G)$ a left Banach $\ell_v^1 (\La,c)$-module structure by defining
\begin{equation}\label{eq:left-mod-action}
	a \cdot f = \sum_{\la \in \La} a(\la) \pi(\la)f
\end{equation}
for $a \in \ell_v^1 (\La,c)$ and $f \in M_v^1 (G)$. We may turn $M_v^1 (G)$ into an inner product module over $\ell_v^1 (\La,c)$ by defining
\begin{equation}\label{eq:left-inner-product}
	\lhs{f}{g} = (\hs{f}{\pi (\la)g})_{\la\in \La},
\end{equation}
for $f,g \in M_v^1 (G)$. Here $\lhs{\cdot}{\cdot}$ is the $\ell_v^1 (\La,c)$-valued inner product. That the module action is continuous, and that the module action and the inner product are well-defined follows from \cref{prop:cty-and-sampling-props}. We likewise get a right $\ell_v^1 (\Lao,\overline{c})$-inner product module structure on $M_v^1 (G)$ by setting
\begin{equation}\label{eq:right-mod-action}
	f \cdot b = \sum_{ \lao \in \Lao} b(\lao) \pi (\lao)^* f
\end{equation}
for $f \in M_v^1 (G)$ and $b \in \ell_v^1 (\Lao , \overline{c})$, and
\begin{equation}\label{eq:right-inner-product}
	\rhs{f}{g} = ( \hs{\pi (\lao)g}{f})_{\lao \in \Lao}
\end{equation}
for $f,g \in M_v^1 (G)$. These are also well-defined by \cref{prop:cty-and-sampling-props}. Actually, with the above defined actions and inner products $M_v^1 (G)$ becomes a pre-equivalence bimodule between $\ell_v^1 (\La,c)$ and $\ell_v^1 (\Lao,\overline{c})$. This verification was done for the Schwartz-Bruhat case in \cite[Theorem 2.15]{Ri88} and for the Feichtinger algebra case in \cite[Theorem 3.13]{Lu09}. We may complete $ M_v^1 (G)$ in the Hilbert $C^*$-module norm coming from $C^* (\La,c)$ (or equivalently the norm from $C^* (\Lao,\overline{c})$) to obtain a $C^* (\La,c)$-$C^* (\Lao,\overline{c})$-equivalence bimodule, which we will denote by $\heis{G}{\La}$. Such modules are known in the literature as \emph{Heisenberg modules}.

The $C^*$-algebra $C^* (\La,c)$ has a very useful property which we will have great need for in \cref{section:smooth-structures-algebra}. Indeed, it can be continuously embedded in $\ell^2 (\La)$. Likewise, the Heisenberg module $\heis{G}{\La}$ can be continuously embedded in $L^2 (G)$. These statements can be proved by ways of localization as in \cite{AuEn19}. However, since we are working exclusively with lattices in phase space, we use a different and simpler proof.
\begin{prop}
	\label{prop:algs-and-mod-are-fct-spaces}
	$C^* (\La,c) \hookrightarrow \ell^2 (\La)$, $C^* (\Lao,\overline{c}) \hookrightarrow \ell^2 (\Lao)$, and $\heis{G}{\La} \hookrightarrow L^2 (G)$ continuously.
\end{prop}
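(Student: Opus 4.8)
The plan is to route all three embeddings through a single device, the canonical tracial state $\tau$ on $C^*(\La,c)$ determined on the dense subalgebra by evaluation at the identity, $\tau(a)=a(0)$ for $a\in\ell^1(\La,c)$. Because $\La$ is abelian, hence amenable, $\|\cdot\|_{C^*}$ is the reduced norm, and $\tau$ extends to a faithful state on $C^*(\La,c)$ with $|\tau(a)|\le\|a\|_{C^*}$. A short computation with the cocycle, using $c(\la,\la)\,c(\la,-\la)=1$, shows that for $a\in\ell^1(\La,c)$ one has $\tau(a^*a)=\sum_{\la\in\La}|a(\la)|^2=\|a\|_{\ell^2(\La)}^2$. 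Hence, since $\tau$ is a state and $a^*a\ge 0$,
\[
\|a\|_{\ell^2(\La)}^2=\tau(a^*a)\le\|a^*a\|_{C^*}=\|a\|_{C^*}^2 .
\]
Thus the inclusion $\ell^1(\La,c)\hookrightarrow\ell^2(\La)$ extends to a norm-decreasing map $C^*(\La,c)\to\ell^2(\La)$; this is precisely the GNS inclusion for $\tau$ under the identification $\ell^2(\La)\cong L^2(C^*(\La,c),\tau)$. The identical argument applied to $\Lao$ (again a lattice, hence amenable) with the cocycle $\overline{c}$ gives $C^*(\Lao,\overline{c})\hookrightarrow\ell^2(\Lao)$.

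For the Heisenberg module I would use the same trace. For $f\in M^1_v(G)$ the coefficient of $\lhs{f}{f}$ at the identity is $\hs{f}{\pi(0)f}=\hs{f}{f}=\|f\|_{L^2(G)}^2$, because $\pi(0)=\Id$, so $\tau(\lhs{f}{f})=\|f\|_{L^2(G)}^2$. Applying $\tau(b)\le\|b\|_{C^*}$ to the positive element $b=\lhs{f}{f}$ yields
\[
\|f\|_{L^2(G)}^2=\tau(\lhs{f}{f})\le\|\lhs{f}{f}\|_{C^*}=\|f\|_{\heis{G}{\La}}^2 .
\]
Hence the inclusion $M^1_v(G)\hookrightarrow L^2(G)$ is contractive for the Hilbert $C^*$-module norm and extends to a continuous map $\heis{G}{\La}\to L^2(G)$. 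Both identities $\tau(a^*a)=\|a\|_{\ell^2}^2$ and $\tau(\lhs{f}{f})=\|f\|_{L^2}^2$ pass to the completions by continuity of $\tau$, of multiplication, and of the module inner product.

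The point that needs genuine input, and which I expect to be the main obstacle, is injectivity: a priori an element of a completion could have vanishing image, so ``$\hookrightarrow$'' is more than continuity. This is exactly where faithfulness of $\tau$ enters. If $a\in C^*(\La,c)$ maps to $0$ in $\ell^2(\La)$ then $\tau(a^*a)=\|a\|_{\ell^2}^2=0$, so $a=0$; and if $f\in\heis{G}{\La}$ maps to $0$ in $L^2(G)$ then $\tau(\lhs{f}{f})=\|f\|_{L^2}^2=0$, whence $\lhs{f}{f}=0$ (write $\lhs{f}{f}=b^{*}b$ with $b=(\lhs{f}{f})^{1/2}$ and invoke faithfulness) and therefore $f=0$. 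The two remaining verifications---that the canonical trace is faithful on $C^*(\La,c)$, and the cocycle bookkeeping giving $\tau(a^*a)=\sum_{\la}|a(\la)|^2$---are standard; faithfulness is the one substantive input, and it is available precisely because the amenability of $\La$, noted in \cref{section:twisted-group-algebras}, identifies $C^*(\La,c)$ with the reduced twisted group $C^*$-algebra, on which the canonical trace is faithful.
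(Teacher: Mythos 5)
Your proposal is correct and follows essentially the same route as the paper: both arguments hinge on the canonical trace $\tau(a)=a(0)$, the identity $\tau(a^*a)=\|a\|_{\ell^2(\La)}^2$ (resp.\ $\tau(\lhs{f}{f})=\|f\|_{L^2(G)}^2$), and extension by continuity to the $C^*$- and module completions. The only difference is that you spell out the injectivity of the extended maps via faithfulness of $\tau$, a point the paper's proof leaves implicit behind the inclusion $\iota(\overline{\ell^1(\La)})\subset\overline{\iota(\ell^1(\La))}$; this is a welcome clarification but not a different method.
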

\begin{proof}
	Since $\La$ is discrete, $C^*(\La,c)$ is equipped with a finite faithful trace \cite[p. 951]{BeOm18} given by the continuous extension of evaluation in $0 \in \La$, that is, the continuous extension of the trace
	\begin{equation*}
	\begin{split}
	\tr: \ell^1 (\La,c) &\to \C \\
	a &\mapsto a(0).
	\end{split}
	\end{equation*}
	A straightforward calculation will show that
	\begin{equation*}
	\tr (aa^*)= \Vert a \Vert_2^2.
	\end{equation*}
	for all $a \in \ell^1 (\La , c)$.
	Since $\tr$ is continuous, it follows that if $(a_n)_n \subset \ell^1 (\La,c)$ is a Cauchy sequence in $C^* (\La,c)$-norm, then 
	\begin{equation*}
		0 = \lim_{m,n\to \infty} \tr ((a_m - a_n)(a_m-a_n)^* ) = \Vert a_m - a_n \Vert_2^2.
	\end{equation*}
	As a result we can define a map $\iota:C^* (\La,c) \to \ell^2 (\La)$ as the continuous extension of the corresponding identity map on $\ell^1 (\La)$. Using that $\iota(\overline{\ell^1 (\La)}) \subset \overline{\iota (\ell^1 (\La))}$, it follows that $C^* (\La,c)\subset \ell^2 (\La)$. We can of course do the same for $C^* (\Lao,\overline{c})$.
	
	In very much the same vein, a straightforward calculation will show that
	\begin{equation*}
		\tr (\lhs{g}{g}) = \| g \|_2^2
	\end{equation*}
	for all $g \in M^1 (G)$. 
	Now let $(f_n)_n \subset M^1 (G)$ be a Cauchy sequence in $\heis{G}{\La}$-norm. Then once again, since $\tr$ is continuous, we have
	\begin{equation*}
		0 = \lim_{m,n\to \infty} \tr(\blangle f_m - f_n, f_m - f_n\rangle) = \lim_{m,n\to \infty} \| f_m - f_n \|_2^2.
	\end{equation*}
	We may then once again define a map $\iota : \heis{G}{\La} \to L^2 (G)$ as the continuous extension of the identity map on $M^1 (G)$. Once again using that $\iota (\overline{M^1 (G)})\subset \overline{\iota(M^1 (G))}$ it follows that $\heis{G}{\La}\subset L^2 (G)$.
\end{proof}

\begin{exmp}[The noncommutative $2$-torus]
\label{ex:Heis-mod-for-R2}
	We look at how we obtain the noncommutative $2$-torus from the above constructions and how the weighted Feichtinger algebras $M_v^1 (\R)$ can be completed to Hilbert $C^*$-modules. For details we refer the reader to \cite{Lu09} where this is done in depth for more general noncommutative $2d$-tori, $d \in \N$.
	
	Let $(x, \omega)\in \R \times \widehat{\R} \cong \R^2$. On $L^2 (\R)$ the time shift operator $T_x$ is then
	\begin{equation*}
	T_x f(t) = f(t-x), \quad t\in \R,
	\end{equation*}
	and the modulation operator $M_{\omega}$ is
	\begin{equation*}
	M_\omega f(t) = e^{2\pi i \omega t} f(t), \quad t\in \R,
	\end{equation*}
	for $f \in L^2 (\R)$. The time-frequency shift $\pi (x,\omega)$ is then
	\begin{equation*}
	\pi (x,\omega) f(t) = e^{2\pi i \omega t} f(t-x), \quad t\in \R,
	\end{equation*}
	for $f \in L^2 (\R)$. Moreover, the Heisenberg $2$-cocycle is given by
	\begin{equation*}
	c((x,\omega),(y,\eta)) = e^{-2\pi i \eta x }.
	\end{equation*}
	Now let $\La \subset \R^2$ be a lattice and let $v$ be a weight on $\R^2$. 
	As before we get a faithful representation of $\ell_v^1 (\La, c)$ on $L^2 (\R)$ by
	\begin{equation*}
	a\cdot f = \sum_{\la \in \La} a(\la) \pi(\la)f
	\end{equation*}
	for $f \in L^2 (\R)$ and $a \in \ell_v^1 (\La, c)$. Completing $\ell_v^1 (\La, c)$ in the induced operator norm we obtain a $C^*$-algebra $C^* (\La,c)$, which is also known as the noncommutative $2$-torus. The usual noncommutativity parameter $\theta$ of e.g. \cite{Ri81} is determined by the lattice. In particular, $\La = L \Z^2$ for some $L \in GL(\R^2)$, and then $\theta = \det L$. By ways of \eqref{eq:left-mod-action} and \eqref{eq:left-inner-product} we complete $M^1_v (\R)$ to a Heisenberg module $\heis{\R}{\La}$ over $C^* (\La,c)$. We may indeed do the same for $\ell^1_v (\Lao,\overline{c})$ and get a right Hilbert module structure by ways of \eqref{eq:right-mod-action} and \eqref{eq:right-inner-product}. Then $\heis{\R}{\La}$ becomes a $C^* (\La,c)$-$C^* (\Lao,\overline{c})$-equivalence bimodule with $M^1_v (\R)$ as an $\ell^1_v (\La,c)$-$\ell^1_v (\Lao,\overline{c})$-pre-equivalence bimodule.
\end{exmp}

\subsection{Smooth structures on twisted group $C^*$-algebras}
\label{section:smooth-structures-algebra}
At last we can make precise a $QC^k$-structure on twisted group $C^*$-algebras of lattices in phase space. To do this we introduce the relevant spectral triples. 

We observed in \cref{prop:algs-and-mod-are-fct-spaces} that $C^* (\La,c)$ embeds continuously into $\ell^2 (\La)$. Indeed, we obtain $\ell^2 (\La)$ if we complete $C^*(\La,c)$ in the inner product given by
\begin{equation*}
	\hs{a_1}{a_2} = \tr (a_1^* a_2)
\end{equation*}
for $a_1,a_2\in A$. 
This is immediate by the proof of \cref{prop:algs-and-mod-are-fct-spaces}. There is then a continuous action of $C^* (\La,c)$ on $\ell^2 (\La)$ from the left, which is the continuous extension of the left action of $C^* (\La,c)$ on itself by multiplication.
If $v$ is a commutator bounded weight on $\tfp{G}$ and $n \in \N$ we may then consider the even spectral triple given by
\begin{equation}
	(\ell_{v^n}^1 (\La,c), \ell^2 (\La)\oplus \ell^2 (\La), D),
\end{equation}
where $D$ is the (unbounded) selfadjoint operator given by
\begin{equation} 
	D = 
	\begin{pmatrix}
	0 & v \\
	v & 0
	\end{pmatrix}.
\end{equation}
However, we may consider a more general situation.
\begin{defn}
	\label{def:spec-trip-comp-fct}
	Let $v$ be a weight on $\tfp{G}$ (not necessarily commutator bounded), let $\La \subset \tfp{G}$ be a lattice, and let $f: [0,\infty) \to [0,\infty)$ be a function. We say $f$ is \emph{spectral triple compatible for $v$ with respect to $\La$} if the following conditions are satisfied:
	\begin{enumerate}
		%\item [i)] $f(v)(\la)\geq 0$ for all $\la \in \La$.
		\item [i)] There is a constant $\Cdif\in [0,\infty)$ such that
		\begin{equation*}
		|f(v) (\la+\mu) - f(v)(\la)| \leq \Cdif f(v)(\mu)
		\end{equation*}
		for all $\la, \mu \in \La$.
		\item [ii)] There is a constant $\Csm \in [0,\infty)$ such that
		\begin{equation*}
		f(v)(\la+\mu) \leq \Csm f(v)(\la)\cdot f(v)(\mu)
		\end{equation*}
		for all $\la, \mu \in \La$.
		\item [iii)] There is a constant $\Cgr\in [0,\infty)$ such that
		\begin{equation*}
		f(v)(\la) \leq \Cgr v(\la)
		\end{equation*}
		for all $\la \in \La$.
	\end{enumerate}
\end{defn}
\begin{rmk}
    We interpret $f(v)(\la)$ as $f(v(\la))$.
\end{rmk}
\begin{rmk}
	Note that condition iii) of \cref{def:spec-trip-comp-fct} implies that for any $q \in [1,\infty]$ there is a constant $C_q \in [0,\infty)$, depending only on $q$, such that
	\begin{equation*}
		\| a\|_{\ell^q_{f(v)}(\La)} \leq C_q \|a \|_{\ell^q_{v}(\La)}
	\end{equation*}
	for all $a \in \ell^q_{f(v)}(\La)$. Moreover, since $f$ may have zeros, $\ell^q_{f(v)}(\La)$ is in general not a Banach space.
\end{rmk}
\begin{rmk}
	The subscripts $\mathrm{dif}$, $\mathrm{sm}$ and $\mathrm{gr}$ on the constants in \cref{def:spec-trip-comp-fct} are chosen so that in subsequent calculations it will be easier to understand which properties of $f$ are being invoked. The subscript $\mathrm{dif}$ reflects that it expresses a bound on a difference, the subscript $\mathrm{sm}$ reflects a form of submultiplicativity, and $\mathrm{gr}$ reflects a growth condition.
\end{rmk}
We observe that for any weight $v$, the set of spectral triple compatible functions with respect to any lattice is nonempty. Indeed, nonnegative constant functions will satisfy the conditions of \cref{def:spec-trip-comp-fct} regardless of the choice of lattice and weight. More interestingly, if $v$ is a commutator bounded weight, we see that the function $f: [0,\infty) \to [0,\infty)$ given by $f(t) = t$ is spectral triple compatible for $v$ with respect to any lattice.

For a lattice $\La \subset \tfp{G}$, a weight $v$ on $\tfp{G}$, and a spectral triple compatible function $f$ for $v$ with respect to $\La$, we then consider the (unbounded) selfadjoint operator $D$ on $\ell^2 (\La) \oplus \ell^2 (\La)$ given by
\begin{equation}
\label{eq:def-D-spectral-op}
	D= \begin{pmatrix}
	0 & f(v) \\ 
	f(v) & 0
	\end{pmatrix}.
\end{equation}
We have the following result.
\begin{prop}
	\label{prop:n-geq-k+1-gives-QCk}
	Let $v$ be a weight on $\tfp{G}$, let $\La$ be a lattice in $\tfp{G}$, let $f$ be a spectral triple compatible function for $v$ with respect to $\La$, and let $D$ be defined by \eqref{eq:def-D-spectral-op}. Then
	$(\ell_{v^n}^1 (\La,c), \ell^2 (\La)\oplus \ell^2 (\La), D)$ is an even spectral triple for $C^*(\La,c)$ whenever $n\geq 1$, and $\ell_{v^n}^1 (\La,c) \subset QC^* (\La,c)_k$ for $n\geq k+1$. In other words, if $n\geq k+1$ then the spectral triple is quantum $C^k$. 
\end{prop}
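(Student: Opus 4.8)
(1) that each $a \in \ell^1_{v^n}(\La,c)$ acts as a bounded operator on $\ell^2(\La)$, so we have a genuine $*$-representation; (2) that $D$ is self-adjoint with $[D,a]$ bounded for each such $a$; and (3) that the regularity estimates of Definition~\ref{def:spec-trip-comp-fct} propagate through the iterated commutators $\ad^n(|D|)$ to land $a$ and $[D,a]$ in $\Dom(\ad^k(|D|))$ whenever $n \geq k+1$. The conceptual heart is a bookkeeping argument: one shows that the operations involved in computing $[D,a]$ and in applying $\ad(|D|)$ each ``cost'' one factor of $v$, and that the membership $a \in \ell^1_{v^n}$ provides exactly the budget $v^n$ needed to absorb these costs.

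\smallskip

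\textbf{Setup and the operator $|D|$.} First I would compute $|D|$ explicitly. Since $D = \begin{pmatrix} 0 & f(v) \\ f(v) & 0 \end{pmatrix}$ with $f(v)$ acting as the diagonal multiplication operator $(f(v)a)(\la) = f(v)(\la)\,a(\la)$ on $\ell^2(\La)$, one has $D^2 = \begin{pmatrix} f(v)^2 & 0 \\ 0 & f(v)^2 \end{pmatrix}$, so that $|D| = \begin{pmatrix} f(v) & 0 \\ 0 & f(v) \end{pmatrix}$ is itself diagonal multiplication by $f(v)$ (using $f(v) \geq 0$). This is the key simplification that makes the iterated commutators tractable: $|D|$ acts diagonally, while each $a \in \ell^1_{v^n}(\La,c)$ acts by twisted convolution, i.e. essentially off-diagonally in the ``position'' index.

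\smallskip

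\textbf{The commutator calculation.} The main computation is to show that the single commutator $[|D|, a]$, for $a$ acting by twisted left convolution, is again a twisted convolution operator whose symbol is controlled. Writing out the action on a vector $\psi \in \ell^2(\La)$ and using the convolution formula, the commutator $[f(v), a]$ has integral kernel involving the \emph{difference} $f(v)(\la) - f(v)(\la - \mu)$, which by condition~i) of Definition~\ref{def:spec-trip-comp-fct} is bounded by $\Cdif\, f(v)(\mu)$. Thus $\ad(|D|)(a)$ is represented by the sequence $\mu \mapsto \bigl(f(v)(\la)-f(v)(\la-\mu)\bigr)a(\mu)$-type expression, whose $\ell^1_{?}$-norm is dominated by $\Cdif \|f(v)\cdot a\|_{\ell^1}$-type quantities; crucially, applying $\ad(|D|)$ once converts a weight budget of $v^{m}$ into $v^{m-1}$ (one power of $v$ is spent via condition~iii), which gives $f(v) \leq \Cgr v$). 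Iterating, $\ad^k(|D|)(a)$ costs $k$ powers, so $a \in \ell^1_{v^n}$ with $n \geq k$ keeps $\ad^k(|D|)(a)$ bounded; the extra power is needed so that $[D,a]$ (which already spends one factor of $f(v)$) still has budget $n-1 \geq k$ left, whence the sharp threshold $n \geq k+1$. I would organize this as an induction on $k$, at each stage using condition~ii) (submultiplicativity of $f(v)$) to ensure the convolution stays in the appropriate weighted $\ell^1$-space and condition~i) to bound each new difference.

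\smallskip

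\textbf{The main obstacle.} I expect the hardest part to be making the iterated-commutator bound genuinely rigorous rather than merely heuristic: one must track that each application of $\ad(|D|)$ produces a new first-difference factor $f(v)(\la) - f(v)(\la-\mu)$ and confirm that these nest correctly, so that after $k$ iterations the operator is still of twisted-convolution type with symbol in $\ell^1$ (hence bounded on $\ell^2(\La)$ by the standard estimate $\|a\|_{\mathbb{B}(\ell^2)} \leq \|a\|_{\ell^1}$). The combinatorics of ``which differences hit which arguments'' in $[|D|,[|D|,\dots,[|D|,a]\dots]]$ is where the argument could become delicate; the saving grace is that $|D|$ is diagonal, so each commutator simply replaces the multiplier $f(v)$ by the difference operator, and conditions~i)--iii) are engineered precisely to control exactly this. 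Once boundedness of $a$, $[D,a]$, and all their iterated $\ad^k(|D|)$-images is established for $n \geq k+1$, the conclusion $\ell^1_{v^n}(\La,c) \subset QC^*(\La,c)_k$ follows directly from Definition~\ref{def:QCk-structure-alg}, and the spectral-triple axioms (self-adjointness of $D$, which is clear from the diagonalization above, and compactness of resolvent where required) complete the proof.
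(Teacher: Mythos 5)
Your proposal follows essentially the same route as the paper: compute $|D|$ as diagonal multiplication by $f(v)$, observe that each commutator with a twisted-convolution operator produces a difference kernel $\bigl(f(v)(\la)-f(v)(\la-\mu)\bigr)a(\mu)$ controlled by condition i), iterate so that $\ad^k(|D|)$ yields the $k$-th power of this difference, and convert to weighted $\ell^1$-norms via condition iii), giving the threshold $n\geq k+1$ because $[D,a]$ spends one extra difference factor. The paper organizes the iteration through the closed-form binomial expansion $\ad^k(|D|)(a)=\sum_{i=0}^k(-1)^i\binom{k}{i}|D|^{k-i}a|D|^i$ rather than your proposed induction, but this is the same computation, and your accounting of where each power of $v$ is spent is correct.
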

\begin{proof}
	We begin by verifying that $(\ell_{v^n}^1 (\La,c), \ell^2 (\La)\oplus \ell^2 (\La), D)$ is an even spectral triple for $C^*(\La,c)$ when $n\geq 1$.
	Note first that $\Dom (D)$ is given by 
	\begin{equation*}
	\Dom (D) = \{ (b , b')^T \in \ell^2 (\La) \oplus \ell^2 (\La) \mid f(v) b,f(v)b' \in \ell^2 (\La)  \}.
	\end{equation*}
    
    Throughout the proof, will do the calculations as if the action of $\ell_{v^n}^1 (\La,c)$ on $\ell^2 (\La)$ is by $c$-twisted convolution, denoted $*_c$. This is technically only true on a dense subspace (for example $\ell^1_v (\La)\subset \ell^2 (\La)$), but the actual action is the continuous extension of $c$-twisted convolution. Due to the many conditions we need to check in this proof, we will not make an effort to specify that the elements of $\ell^2 (\La)$ are such that the action of $\ell^1 (\La,c)$ on them is given by $c$-twisted convolution. Rather we will just assume this for simplicity, and
    it will be clear from the calculations that the results go through with the usual extension by density arguments.
    
	To see that $a \cdot \Dom (D) \subset \Dom (D)$ for all $a \in \ell_{v^n}^1 (\La,c)$, let $(b,b')^T \in \Dom (D)$. 
	%Then, since the action of $\ell_{v^n}^1 (\La,c)$ on $\ell^2 (\La)$ is by $c$-twisted convolution, and 
	Due to the form of $D$ it suffices to show $f(v) \cdot (a \ast_c b)\in \ell^2 (\La)$. We have the following
	\begin{align*}
		\|f(v)\cdot (a \ast_c b) \|_{\ell^2(\La)}^2 & = \sum_{\la \in \La} f(v)(\la)^2 \lvert \sum_{\mu \in \La} a(\mu)b(\la-\mu) c(\mu, \la - \mu) \lvert^2 \displaybreak[0] \\
		&\leq \sum_{ \la \in \La} \sum_{\mu\in \La} f(v)(\la)^2 |a(\mu)|^2 |b(\la - \mu)|^2 |c(\mu,\la - \mu)|^2 \displaybreak[0] \\
		&= \sum_{ \la \in \La} \sum_{\mu\in \La} f(v)(\la + \mu)^2 |a (\mu)|^2 |b(\la)|^2 \displaybreak[0] \\
		&\leq \Csm^2 \sum_{ \la \in \La} \sum_{\mu\in \La} f(v)(\la)^2f(v)(\mu)^2 | a(\mu)|^2 |b(\la)|^2  \displaybreak[0]\\
		&= \Csm^2 \sum_{ \mu \in \La} f(v)(\mu)^2 |a(\mu)|^2  \sum_{\la \in \La} f(v)(\la)^2 |b(\la)|^2  \displaybreak[0]\\
		&= \Csm^2 \| f(v)a\|^2_{\ell^2 (\La)} \|f(v) b\|^2_{\ell^2 (\La)} \\
		&\leq \Csm^2  \| f(v) a\|^2_{\ell^1(\La)} \| f(v) b\|^2_{\ell^2 (\La)} \displaybreak[0]\\
		&= \Csm^2  \| a\|^2_{\ell^1_{f(v)}(\La)} \| f(v) b\|^2_{\ell^2 (\La)} \displaybreak[0]\\
		&\leq \Csm^2 \Cgr^2 \| a \|^2_{\ell^1_{v}(\La)}\| f(v) b\|^2_{\ell^2 (\La)}.
	\end{align*}
Here we used $|c(\mu,\la )| = 1$ for all $\mu,\la \in \La$ and $\| f(v)a\|_{\ell^2 (\La)} \leq \| f(v) a\|_{\ell^1(\La)}$. By the calculation it follows that $a \Dom (D) \subset \Dom (D)$ for all $a \in \ell_{v^n}^1 (\La,c)$.

To show that $[D,a]$ extends to a bounded operator on $\ell^2 (\La)$ for all $a \in \ell^1_{v^n}(\La)$, note that for $ (b , b')^T \in \ell^2 (\La)\oplus \ell^2 (\La)$ we have
\begin{equation*}
	[D,a] \begin{pmatrix}
	b \\b'
	\end{pmatrix}
	= \begin{pmatrix}
	f(v)\cdot (a\ast_c b') - a \ast_c (f(v)\cdot b')\\
	f(v) \cdot (a \ast_c b) - a \ast_c (f(v) \cdot b)
	\end{pmatrix}.
\end{equation*} 
Hence it suffices to show that there is $C \geq 0$ such that $\|f(v)\cdot (a\ast_c b) - a \ast_c (f(v)\cdot b)\|_{\ell^2 (\La)} \leq C \|b\|_{\ell^2 (\La)}$ for all $b \in \ell^2 (\La)$. Using $|c(\mu,\la )| = 1$ for all $\mu,\la \in \La$ and  \cref{def:spec-trip-comp-fct} we then have
\begin{align*}
	\|f(v)\cdot (a&\ast_c b) - a \ast_c (f(v)\cdot b)\|^2_{\ell^2 (\La)} \\
	&= \sum_{\la \in \La}\lvert \sum_{ \mu \in \La} f(v)(\la)a(\mu)b(\la - \mu) c(\mu , \la - \mu) - a(\mu)b(\la - \mu)f(v)(\la - \mu)c(\mu,\la-\mu) \rvert^2 \displaybreak[0] \\
	&\leq \sum_{ \la \in \La} \sum_{\mu \in \La} \lvert f(v)(\la) a(\mu) b(\la-\mu) c(\mu,\la - \mu) - a(\mu) b(\la-\mu) f(v)(\la-\mu)c(\mu,\la - \mu)\rvert^2 \displaybreak[0]\\
	&= \sum_{ \la \in \La} \sum_{\mu \in \La} | a(\mu)|^2 |b(\la-\mu)|^2 |c(\mu,\la - \mu)|^2 |f(v)(\la)-f(v)(\la-\mu)|^2 \displaybreak[0]\\
	&= \sum_{ \la \in \La} \sum_{\mu \in \La} |a(\mu)|^2 |b(\la)|^2 |f(v)(\la + \mu) - f(v)(\la)|^2 \displaybreak[0] \\
	&\leq\sum_{ \la \in \La} \sum_{\mu \in \La} |a(\mu)|^2 |b(\la)|^2 \Cdif^2 f(v)(\mu)^2 \displaybreak[0]\\
	&= \Cdif^2 \sum_{\mu \in \La} |a(\mu)|^2 f(v)(\mu)^2 \sum_{ \la \in \La} |b(\la)|^2 \displaybreak[0]\\
	&= \Cdif^2 \| a f(v)\|^2_{\ell^2(\La)} \| b\|^2_{\ell^2 (\La)} \displaybreak[0]\\
	&\leq \Cdif^2 \| a f(v)\|^2_{\ell^1(\La)} \| b\|^2_{\ell^2 (\La)} \displaybreak[0]\\
	&= \Cdif^2 \| a \|^2_{\ell^1_{f(v)}(\La)} \| b\|^2_{\ell^2 (\La)} \displaybreak[0]\\
	&\leq \Cgr^2 \Cdif^2 \| a \|^2_{\ell^1_{v}(\La)} \| b\|^2_{\ell^2 (\La)},
\end{align*}
where we have used $\| a f(v)\|_{\ell^2(\La)}  \leq \| a f(v) \|_{\ell^1(\La)}$. It follows that $[D,a]$ extends to a bounded operator on $\ell^2 (\La)$. Lastly, we need to verify that for any $a \in \ell^1_{v^{n}}(\La,c)$, $a (1+D^2)^{-1/2}$ extends to a compact operator on $\ell^2 (\La)$. Since $D$ is just a multiplication operator, we see that $(1+D^2)^{-1/2}$ is just the multiplication operator
\begin{equation*}
	(1+D^2)^{-1/2} = \begin{pmatrix}
	(1+f(v)^2)^{-1/2} & 0 \\
	0 & (1+f(v)^2)^{-1/2}
	\end{pmatrix}.
\end{equation*}
For $(b , b')^T \in \ell^2 (\La) \oplus \ell^2 (\La)$ and $a \in \ell^1_{v^n}(\La,c)$ we have
\begin{equation*}
	a(1+D^2)^{-1/2} \begin{pmatrix}
	b \\ b'
	\end{pmatrix}
	= \begin{pmatrix}
	a \ast_c ((1+f(v)^{2})^{-1/2}\cdot b) \\
	a \ast_c ((1+f(v)^{2})^{-1/2} \cdot b')
	\end{pmatrix},
\end{equation*}
hence we once again verify the statement in one component. By Young's inequality we obtain
\begin{equation*}
	\begin{split}
	\| a \ast_c ((1+f(v)^{2})^{-1/2} b)\|_{\ell^2(\La)}^2 &\leq \| a\|^2_{\ell^1 (\La)} \| (1+f(v)^{2})^{-1/2} b\|^2_{\ell^2 (\La)}\\
	&\leq \| a\|^2_{\ell^1_{v}(\La)} \|b\|^2_{\ell^2 (\La)},
	\end{split}
\end{equation*}
since $\| a\|_{\ell^1 (\La)} \leq \| a\|_{\ell^1_{v}(\La)}$ and $\| (1+f(v)^{2})^{-1/2} b\|^2_{\ell^2 (\La)} \leq \|b\|^2_{\ell^2 (\La)}$ as $f(v)^2\geq 0$. Hence if $(a_j)_j \subset \ell^1_{v^{n}}(\La,c)$ is some sequence consisting of finitely supported sequences converging to $a \in \ell^1_{v^{n}}(\La,c)$, then
\begin{equation*}
	\| (a-a_j) \ast_c ((1+f(v)^{2})^{-1/2} b)\|_{\ell(\La)}^2 \leq \| a-a_j\|^2_{\ell^1_{v^n}(\La)} \|b\|^2_{\ell^2 (\La)} \to 0,
\end{equation*}
which shows that $a(1+D^2)^{-1/2}$ extends to a compact operator on $\ell^2 (\La)$. This shows that $(\ell_{v^n}^1 (\La,c), \ell^2 (\La)\oplus \ell^2 (\La), D)$ is a spectral triple for $C^* (\La,c)$ whenever $n \geq 1$. It is an even spectral triple since it is graded by
\begin{equation*}
	\gamma = \begin{pmatrix}
	1 & 0\\
	0 & -1
	\end{pmatrix}.
\end{equation*}
It remains to show that $(\ell_{v^n}^1 (\La,c), \ell^2 (\La)\oplus \ell^2 (\La), D)$ is a $QC^k$ spectral triple for $n \geq k+1$. Note that
\begin{equation*}
	|D| = \begin{pmatrix}
	 f(v) & 0 \\ 0& f(v)
	\end{pmatrix}
\end{equation*}
since $D$ is just a multiplication operator and $f(v) (\la) \geq  0$ for all $\la \in \La$. We also note that we can write out the commutator quite explicitly. An easy induction argument will show that
\begin{equation*}
	\ad^{k} (|D|)(a) = \sum_{i=0}^k (-1)^i \binom{k}{i} |D|^{k-i}a|D|^i =  \sum_{i=0}^k (-1)^i \binom{k}{i} |D|^{k-i}a|D|^i.
\end{equation*}
We need only look at what happens in one component.
Let $b \in \ell^2 (\La)$ and $a \in \ell^1_{v^{n}} (\La,c)$. Ignoring the fact that $D$ interchanges the two components, we have by slight abuse of notation
\begin{align*}
	\|\ad^k (|D|)(a) (b)\|_{\ell^2 (\La)}^2 &= \| \sum_{i=0}^k (-1)^i \binom{k}{i} |D|^{k-i}a|D|^i (b)\|_{\ell^2 (\La)}^2 \displaybreak[0]\\
	&= \sum_{ \la \in \La} \lvert \sum_{\mu \in \La} \sum_{i=0}^{k} (-1)^i \binom{k}{i} f(v) (\la)^{k-i} a(\mu)  f(v) (\la - \mu)^i b(\la-\mu) c(\mu,\la-  \mu)\rvert^2 \displaybreak[0]\\
	&\leq \sum_{ \la \in \La} \sum_{ \mu \in \La} \lvert \sum_{i=0}^{k} (-1)^i \binom{k}{i} f(v) (\la)^{k-i} a(\mu)f(v) (\la - \mu)^i b(\la-\mu) c(\mu,\la -\mu)\rvert^2 \displaybreak[0]\\
	&= \sum_{ \la \in \La} \sum_{ \mu \in \La} \lvert \sum_{i=0}^{k} (-1)^i \binom{k}{i} f(v) (\la + \mu)^{k-i} a(\mu)f(v) (\la)^i b(\la) c(\mu,\la)\rvert^2 \displaybreak[0]\\
	&= \sum_{ \la \in \La} \sum_{ \mu \in \La} |a(\mu )|^2 |b(\la)|^2 |c(\mu,\la)|^2 \lvert \sum_{i=0}^k (-1)^i \binom{k}{i} f(v) (\la + \mu)^{k-i} f(v) (\la)^i \rvert^2 \displaybreak[0]\\
	&=  \sum_{ \la \in \La} \sum_{ \mu \in \La} |a(\mu )|^2 |b(\la)|^2 \lvert ( f(v)(\la+\mu) - f(v) (\la) )^k \rvert^2 \displaybreak[0]\\
	&\leq \sum_{\la \in \La} \sum_{ \mu \in \La} | a(\mu)|^2 |b(\la)|^2 \Cdif^{2k} f(v) (\mu)^{2k} \displaybreak[0]\\
	&= \Cdif^{2k} \sum_{\mu \in \La} |a(\mu)|^2 |f(v) (\mu)|^{2k} \sum_{\la \in \La } |b(\la)|^2 \displaybreak[0]\\
	&= \Cdif^{2k} \|  a f(v)^k \|^2_{\ell^2 (\La)} \| b\|^2_{\ell^2 (\La)} \displaybreak[0]\\
	&\leq \Cdif^{2k} \|  a f(v)^{k} \|^2_{\ell^1 (\La)} \| b\|^2_{\ell^2 (\La)} \displaybreak[0]\\
	&= \Cdif^{2k} \|  a  \|^2_{\ell^1_{f(v)^{k}} (\La)} \| b\|^2_{\ell^2 (\La)} \displaybreak[0]\\
	&\leq \Cdif^{2k} \Cgr^{2k} \| a\|^2_{\ell^1_{v^{k}}(\La)} \| b\|^2_{\ell^2 (\La)},
\end{align*}
from which it follows that $a \in \Dom (\ad^k (|D|))$ if $a \in \ell^1_{v^{k}}(\La)$. In particular it holds for $a \in \ell^1_{v^n}(\La,c)$ as long as $n \geq k$. %Here we have used $|c(\mu,\la)| = 1$ for all $\la , \mu \in \La$, $|f(v)(\la+\mu)| - |f(v) (\la)| \leq C'_{v,p,r} |f(v) (\mu)|$ and $\|  a  \|_{\ell^1_{|p(v)^{kr}|} (\La)} \leq C^{''}_{v,p,r} \| a\|_{\ell^1_{v^{kr\deg p}}(\La)}$ for some constants $C'_{v,p,r}$ and $C^{''}_{v,p,r}$. The latter two facts follow by \cref{lemma:polynomial-approx}. 
Along the same lines we verify that $[D,a] \in \Dom (\ad^k (|D|))$ for $a \in \ell^1_{v^n}(\La,c)$, $n \geq k+1$. Ignoring the fact that $D$ interchanges the two components, we have for $b \in \ell^2 (\La)$ by slight abuse of notation
\begin{align*}
&\| \ad^k (|D|)([D,a])(b)\|^2_{\ell^2 (\La)} = \| \sum_{i=0}^k (-1)^i  \binom{k}{i} |D|^{k-i} [D,a]|D|^{i} b \|^2_{\ell^2 (\La)} 
\\
&= \sum_{\la \in \La} \lvert \sum_{ \mu \in \La} \sum_{i=0}^{k} (-1)^i \binom{k}{i} \bigg( f(v) (\la)^{k+1-i} a(\mu) f(v)(\la-\mu)^i b(\la - \mu) c(\mu,\la - \mu) \\
&\qquad -f(v) (\la)^{k-i} a(\mu) f(v) (\la-\mu)^{i+1} b(\la - \mu) c(\mu,\la-\mu) \bigg)\rvert^2 \displaybreak[0]
\\
&\leq  \sum_{\la \in \La}  \sum_{ \mu \in \La} \lvert \sum_{i=0}^{k} (-1)^i \binom{k}{i} \bigg( f(v) (\la)^{k+1-i} a(\mu) f(v)(\la-\mu)^i b(\la - \mu) c(\mu,\la - \mu) 
\\
&\qquad -f(v) (\la)^{k-i} a(\mu) f(v) (\la-\mu)^{i+1} b(\la - \mu) c(\mu,\la-\mu) \bigg)\rvert^2 \displaybreak[0]
\\
&= \sum_{\la \in \La}  \sum_{ \mu \in \La} \lvert \sum_{i=0}^{k} (-1)^i \binom{k}{i} \bigg( f(v) (\la + \mu)^{k+1-i} a(\mu) f(v)(\la)^i b(\la) c(\mu,\la)
\\
&\qquad -f(v) (\la+\mu)^{k-i} a(\mu) f(v) (\la)^{i+1} b(\la) c(\mu,\la) \bigg)\rvert^2 \displaybreak[0]
\\
&= \sum_{\la \in \La}  \sum_{ \mu \in \La} |a(\mu)|^2 |b(\la)|^2 |c(\mu,\la)|^2 \lvert \sum_{i=0}^{k} (-1)^i \binom{k}{i} \bigg( f(v) (\la + \mu)^{k+1-i} f(v)(\la)^i
\\
&\qquad -f(v) (\la+\mu)^{k-i}  f(v) (\la)^{i+1}   \bigg)\rvert^2 \displaybreak[0]
\\
&= \sum_{\la \in \La}  \sum_{ \mu \in \La} |a(\mu)|^2 |b(\la)|^2  |f(v) (\la+\mu) - f(v) (\la)|^2 |(f(v) (\la+\mu) - f(v) (\la))^k|^2 \displaybreak[0]
\\
&\leq \sum_{\la \in \La}  \sum_{ \mu \in \La} |a(\mu)|^2 |b(\la)|^2 \Cdif^2 f(v) (\mu)^2 \Cdif^{2k} f(v) (\mu)^{2k} \displaybreak[0]
\\
&= \Cdif^{2k+2} \sum_{\mu \in \La} |a(\mu)|^2 f(v) (\mu)^{2(k+1)} \sum_{ \la \in \La} | b(\la)|^2 \displaybreak[0]\\
&= \Cdif^{2k+2} \| a f(v)^{k+1} \|^2_{\ell^2 (\La)} \| b\|^2_{\ell^2 (\La)} \displaybreak[0]
\\
&\leq \Cdif^{2k+2} \| a f(v)^{k+1} \|^2_{\ell^1 (\La)} \| b\|^2_{\ell^2 (\La)} \displaybreak[0]
\\
&= \Cdif^{2k+2} \| a \|^2_{\ell^1_{f(v)^{k+1}}(\La)}  \| b\|^2_{\ell^2 (\La)} \\
&\leq \Cgr^{2k+2} \Cdif^{2k+2} \| a \|^2_{\ell^1_{v^{k+1}}(\La)}  \| b\|^2_{\ell^2 (\La)}.
\end{align*}
We then see that $\ell^1_{v^{n}}(\La,c) \subset QC^*(\La,c)_k$ if $n \geq k+1$, which finishes the proof.
\end{proof}
Now let $D$ be given by
\begin{equation}
	\label{eq:def-D-2}
	D = \begin{pmatrix}
	f(v) & 0 \\
	0 & f(v)
	\end{pmatrix}.
\end{equation}
Then $D = |D|$, and
the following is also true by more or less the same proof as above except for the grading.
\begin{prop}
	\label{prop:n-geq-k+1-gives-QCk-2}
	Let $v$ be a weight on $\tfp{G}$, let $\La$ be a lattice in $\tfp{G}$, let $f$ be a spectral triple compatible function for $v$ with respect to $\La$, and let $D$ be defined by \eqref{eq:def-D-2}. Then
	$(\ell_{v^n}^1 (\La,c), \ell^2 (\La)\oplus \ell^2 (\La), D)$ is a spectral triple for $C^*(\La,c)$ whenever $n\geq 1$, and $\ell_{v^n}^1 (\La,c) \subset QC^* (\La,c)_k$ for $n\geq k+1$. In other words, if $n\geq k+1$ then the spectral triple is quantum $C^k$. 
\end{prop}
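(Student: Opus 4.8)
The plan is to exploit the observation recorded immediately before the statement: for the diagonal operator in \eqref{eq:def-D-2} one has $|D| = D = \diag(f(v), f(v))$, which is \emph{exactly} the operator $|D|$ that appeared in \cref{prop:n-geq-k+1-gives-QCk}. Consequently every estimate in the proof of \cref{prop:n-geq-k+1-gives-QCk} phrased in terms of $|D|$ -- in particular the two long displayed computations of $\ad^k(|D|)(a)$ and $\ad^k(|D|)([D,a])$ -- carries over verbatim. So the bulk of the work has effectively been done already, and I only need to re-examine the few places where the off-diagonal structure of the earlier $D$ entered, and to replace the grading argument at the end.

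First I would record the spectral-triple axioms. The domain $\Dom(D) = \{(b,b')^T : f(v)b,\, f(v)b' \in \ell^2(\La)\}$ is unchanged, so the verification that $a \cdot \Dom(D) \subset \Dom(D)$ for $a \in \ell_{v^n}^1(\La,c)$ reduces, exactly as before, to showing $f(v)\cdot(a \ast_c b) \in \ell^2(\La)$, which is the identical estimate using $\Csm$ and $\Cgr$. For the commutator, the only genuine change is that the diagonal $D$ now gives
\[
[D,a]\begin{pmatrix} b \\ b' \end{pmatrix} = \begin{pmatrix} f(v)\cdot(a \ast_c b) - a \ast_c (f(v)\cdot b) \\ f(v)\cdot(a \ast_c b') - a \ast_c (f(v)\cdot b') \end{pmatrix},
\]
so $[D,a]$ acts componentwise rather than interchanging the two components. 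Since the earlier boundedness estimate was carried out in a single component, it applies unchanged to each entry here and yields the same bound in terms of $\Cdif$ and $\Cgr$; hence $[D,a]$ extends boundedly. Compactness of $a(1+D^2)^{-1/2}$ is literally the same, since $(1+D^2)^{-1/2} = \diag((1+f(v)^2)^{-1/2},\, (1+f(v)^2)^{-1/2})$ agrees with the earlier operator and the approximation by finitely supported sequences goes through word for word.

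For the $QC^k$ claim I would simply invoke the two displayed estimates from the proof of \cref{prop:n-geq-k+1-gives-QCk}: because $\ad^k(|D|)$ depends only on $|D|$, and $|D|$ is the same operator, those computations show $a \in \Dom(\ad^k(|D|))$ whenever $a \in \ell^1_{v^k}(\La,c)$ and $[D,a] \in \Dom(\ad^k(|D|))$ whenever $a \in \ell^1_{v^{k+1}}(\La,c)$, giving $\ell_{v^n}^1(\La,c) \subset QC^*(\La,c)_k$ for $n \geq k+1$. The one point where I must depart from the earlier argument is the conclusion: the present $D$ \emph{commutes} with $\gamma = \diag(1,-1)$ rather than anticommuting with it, so $\gamma$ is no longer a grading and the triple is an ordinary (ungraded) spectral triple, not an even one. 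I do not anticipate a real obstacle here; the single thing worth double-checking is that none of the inherited estimates secretly relied on the off-diagonal form of the earlier $D$, and indeed they do not, since every norm computation in \cref{prop:n-geq-k+1-gives-QCk} was performed one component at a time.
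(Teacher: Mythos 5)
Your proposal is correct and matches the paper's treatment exactly: the paper gives no separate proof, stating only that the result follows ``by more or less the same proof as above except for the grading,'' which is precisely the reduction you carry out. Your explicit check that the componentwise action of $[D,a]$ and the loss of the grading are the only changes is a faithful (and slightly more careful) expansion of that remark.
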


\subsection{Modulation spaces as smooth modules}
\label{section:modulation-spaces-as-smooth-modules}
In \cref{prop:n-geq-k+1-gives-QCk} we saw how to obtain an even $QC^k$ spectral triple $(\ell_{v^n}^1 (\La,c), \ell^2 (\La)\oplus \ell^2 (\La), D)$ for $C^* (\La,c)$ whenever $n \geq k+1$. The goal of this section is to show how the Heisenberg module $\heis{G}{\La}$ of \cref{section:modulation-spaces-as-modules} can be equipped with a $QC^k$-structure for any $k \in \N$. The proof follows the lines of \cite[Proposition 2.1]{Ri81} and \cite[Proposition 3.7]{Ri88}.
\begin{prop}
	\label{prop:smooth-struct-on-heis-mod}
	Let $v$ be a weight on $\tfp{G}$, let $\La$ be a lattice in $\tfp{G}$, let $f$ be a spectral triple compatible function for $v$ with respect to $\La$, and let $C^*(\La,c)$ be given a $QC^k$-structure by ways of \cref{prop:n-geq-k+1-gives-QCk} or \cref{prop:n-geq-k+1-gives-QCk-2} for some $k \in \N$. Then there is a uniformly norm bounded approximate unit $(e_m)_{m=1}^{\infty}$ of the form \eqref{eq:approx-id-form} such that $(\heis{G}{\La}, (e_m)_{m=1}^{\infty})$  is a $QC^k$-module over $C^* (\La,c)$. 
\end{prop}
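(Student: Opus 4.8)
The plan is to produce a finite Parseval module frame for $\heis{G}{\La}$ whose windows all lie in the smooth subspace $M^1_{v^n}(G)$, and then to extract both the approximate unit and the $QC^k$-condition directly from it. Write $A = C^*(\La,c)$, $B = C^*(\Lao,\overline{c})$, $\A = \ell^1_{v^n}(\La,c)$, $\B = \ell^1_{v^n}(\Lao,\overline{c})$ and $\E = M^1_{v^n}(G)$, where $n \geq k+1$ is the exponent supplied by \cref{prop:n-geq-k+1-gives-QCk} (or \cref{prop:n-geq-k+1-gives-QCk-2}). Since $v^n$ is again a weight, the construction of \cref{section:modulation-spaces-as-modules} applies verbatim and makes $\E$ an $\A$-$\B$-pre-equivalence bimodule sitting densely inside the same Heisenberg module $\heis{G}{\La}$.

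First I would observe that, because $\Lao$ is a lattice it is discrete, so $B$ is unital; by \cref{prop:fin-gen-proj-iff-unital} the module $\heis{G}{\La}$ is then finitely generated projective as a left $A$-module and therefore admits some finite module frame $(\tilde g_i)_{i=1}^l$, which by \cref{prop:results-from-aujalu} i) is the same as saying $\sum_{i=1}^l \rhs{\tilde g_i}{\tilde g_i}$ is invertible in $B$. I would then push the windows into the smooth part: the map $g \mapsto \rhs{g}{g}$ is norm continuous (by Cauchy--Schwarz in the bimodule), the invertibles of $B$ form an open set, and $\E$ is dense in $\heis{G}{\La}$, so replacing each $\tilde g_i$ by a sufficiently close $g_i \in \E$ keeps $\sum_{i=1}^l \rhs{g_i}{g_i}$ invertible. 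Thus $(g_i)_{i=1}^l$ is a module frame for $\heis{G}{\La}$ with every $g_i \in M^1_{v^n}(G)$.

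Next I would pass to the canonical Parseval frame $(h_i)_{i=1}^l$, $h_i = \modft^{-1/2}g_i$. The regularity input here is that $\B = \ell^1_{v^n}(\Lao,\overline{c})$ is spectral invariant in $B$ with the same unit, which is \cref{prop:GrLe-spec-inv} applied to the discrete group $\Lao$ and the weight $v^n$; \cref{prop:results-from-aujalu} iii) then yields $h_i = \modft^{-1/2}g_i \in \E = M^1_{v^n}(G)$ for each $i$. Being Parseval, $(h_i)$ satisfies $\sum_{i=1}^l \Theta_{h_i,h_i} = \Id_{\heis{G}{\La}}$. I would define the sequence by taking the $i$-th window to be $h_i$ for $i \leq l$ and $0$ for $i > l$, so that $e_m = \sum_{i=1}^m \Theta_{h_i,h_i}$ obeys $0 \leq e_m \leq \Id$ (since $\Id - e_m = \sum_{i>m}\Theta_{h_i,h_i} \geq 0$) and $e_m = \Id$ for $m \geq l$; hence $(e_m)_{m=1}^\infty$ is an approximate unit of the form \eqref{eq:approx-id-form} with $\|e_m\| \leq 1$.

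Finally I would check the $QC^k$-condition, which is where the earlier computations pay off: for $h_i,h_j \in M^1_{v^n}(G)$, \cref{prop:cty-and-sampling-props} iii) with weight $v^n$ gives $\lhs{h_i}{h_j} = (\hs{h_i}{\pi(\la)h_j})_{\la\in\La} \in \ell^1_{v^n}(\La,c)$, while \cref{prop:n-geq-k+1-gives-QCk} shows $\ell^1_{v^n}(\La,c) \subset QC^*(\La,c)_k = QA_k$ precisely because $n \geq k+1$. So $\lhs{h_i}{h_j} \in QA_k$ for all $i,j$, which is exactly the requirement of \cref{def:QCk-structure-mods}, and $(\heis{G}{\La},(e_m)_{m=1}^\infty)$ is a $QC^k$-module over $A$. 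The only genuinely nontrivial step is the construction of the smooth frame; everything after it is bookkeeping. The \emph{main obstacle} is therefore ensuring the windows can be taken in $M^1_{v^n}(G)$: finite projectivity guarantees \emph{a} frame, density together with openness of the invertibles pushes the windows into $\E$ without destroying the frame property, and the spectral invariance of $\B$ then keeps the canonical Parseval windows smooth as well.
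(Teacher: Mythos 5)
Your proof is correct, and its overall architecture coincides with the paper's: produce a Parseval module frame with windows in $M^1_{v^{n}}(G)$, $n\geq k+1$, use spectral invariance of $\ell^1_{v^{n}}(\Lao,\overline{c})$ together with \cref{prop:results-from-aujalu} iii) to keep the canonical Parseval windows in the weighted Feichtinger algebra, and then read off $\lhs{g_i}{g_j}\in \ell^1_{v^{n}}(\La,c)\subset QC^*(\La,c)_k$ from \cref{prop:cty-and-sampling-props} iii) and \cref{prop:n-geq-k+1-gives-QCk}. The one place where you genuinely diverge is the construction of the initial frame with smooth windows. The paper works entirely inside the pre-equivalence bimodule: density of $\linspan\langle \E,\E\brangle$ in $\B$ lets one pick finitely many $h_i,h'_i\in M^1_{v^{k+1}}(G)$ with $\sum_i\rhs{h_i}{h'_i}$ invertible, and after multiplying by the inverse (which stays in $\B$ by spectral invariance) one gets $\sum_i\rhs{h_i}{h''_i}=1_{\B}$ directly, whence a frame via \cref{prop:dual-seq-implies-frame}. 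You instead invoke finite generation and projectivity of $\heis{G}{\La}$ (via \cref{prop:fin-gen-proj-iff-unital}) to get \emph{some} finite frame in the completed module, and then perturb its windows into $\E$ using continuity of $g\mapsto\rhs{g}{g}$, openness of the invertibles in $B$, and density of $\E$ in $\heis{G}{\La}$. This is a valid and arguably more standard operator-algebraic route, but note that it leans on two facts the paper does not state: that a finitely generated projective Hilbert module over a unital $C^*$-algebra admits a finite module frame (a Frank--Larson type result), and that $M^1_{v^n}(G)$ is dense in $\heis{G}{\La}$ in the Hilbert module norm. Both are true and citable, but the paper's direct approximation of $1_{\B}$ by finite sums of inner products sidesteps them, which is why it is the slicker argument in this specific setting.
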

\begin{proof}
	We first fix $k \in \N$. It suffices to prove that we can find a (uniformly norm bounded) approximate unit $(e_m)_{m=1}^{\infty}$ where
	\begin{equation*}
		e_m = \sum_{i=1}^m \modft_{g_i , g_i},
	\end{equation*}
	for which $\lhs{g_i}{g_j} \in \ell^1_{v^{k+1}} (\La, c)\subset QC^*(\La,c)_k$ for all $i,j \in \{1,\ldots, m\}$ and all $m \in \N$, as determined by \cref{prop:n-geq-k+1-gives-QCk} or \cref{prop:n-geq-k+1-gives-QCk-2}. Indeed, we will find a unit. Note first that $\heis{G}{\La}$ is a $C^* (\La,c)$-$C^* (\Lao, \overline{c})$-equivalence bimodule and both $C^*$-algebras are unital. Moreover, we know that $M^1_{v^{k+1}}(G)$ is an $\ell^1_{v^{k+1}}(\La,c)$-$\ell^1_{v^{k+1}}(\Lao,\overline{c})$-pre-equivalence bimodule. Now note that $\ell^1_{v^{k+1}}(\Lao,\overline{c})$ is unital with the same unit as $C^* (\Lao, \overline{c})$. Furthermore, $\ell^1_{v^{k+1}}(\Lao,\overline{c})$ is spectral invariant in $C^* (\Lao, \overline{c})$ by \cref{prop:GrLe-spec-inv}.
	Hence we are in the situation of \cref{prop:results-from-aujalu}.
	Since $M^1_{v^{k+1}}(G)$ is a pre-equivalence bimodule, we may find finitely many elements $h_1, \ldots ,h_l, h'_1 , \ldots , h'_l \in M^1_{v^{k+1}}(G)$ such that $\sum_{i=1}^l \rhs{h_i}{h'_i}$ is invertible. As $\ell^1_{v^{k+1}}(\Lao,\overline{c})$ is spectral invariant in $C^* (\Lao, \overline{c})$, it follows that
	\begin{equation*}
		\bigg( \sum_{i=1}^l \rhs{h_i}{h'_i} \bigg)^{-1} \in \ell^1_{v^{k+1}}(\Lao,\overline{c}).
	\end{equation*}
	If we then set $h''_i = h'_i \cdot ( \sum_{i=1}^l \rhs{h_i}{h'_i} )^{-1} \in M^1_{v^{k+1}}(G)$, we get
	\begin{equation*}
		\sum_{i=1}^l \rhs{h_i}{h''_i} = 1_{\ell^1_{v^{k+1}}(\Lao,\overline{c})} = 1_{C^* (\Lao, \overline{c})} = 1_{C^* (\Lao, \overline{c})}^* = \bigg( \sum_{i=1}^l \rhs{h_i}{h''_i}\bigg)^* = \sum_{i=1}^l \rhs{h''_i}{h_i} .
	\end{equation*}
	But then $(h_i)_{i=1}^l$ is a module frame for $\heis{G}{\La}$ by \cref{prop:dual-seq-implies-frame} with $h_i \in M^1_{v^{k+1}}(G)$ for all $i=1,\ldots ,l$. It then follows by \cref{prop:results-from-aujalu} that there is $g_i \in M^1_{v^{k+1}}(G)$ such that $\sum_{i=1}^l \rhs{g_i}{g_i} = 1_{C^* (\Lao, \overline{c})}$. For any $f \in \heis{G}{\La}$ we then have
	\begin{equation*}
		\sum_{i=1}^l \modft_{g_i, g_i} f = \sum_{i=1}^l \lhs{f}{g_i} g_i = \sum_{i=1}^l f \rhs{g_i}{g_i} = f \sum_{i=1}^{l} \rhs{g_i}{g_i} = f 1_{C^* (\Lao, \overline{c})} = f,
	\end{equation*}
	which shows that $(g_i)_{i=1}^l$ has the desired property. Since $\lhs{g_i}{g_j}\in \ell^1_{v^{k+1}} (\La,c)$ for all $i,j=1,\ldots,l$, it follows that $(\heis{G}{\La}, (g_i)_{i=1}^l)$ is a $QC^k$-module over $C^* (\La,c)$. 
\end{proof}

Remark that even in the case of elementary groups as in \cite{Ri88} the above results are stronger than just being able to find tight module frames with elements in the Schwartz space. Indeed, in case $G = \R^d$, $d \in \N$, and $\La$ is a lattice in $\R^d \times \widehat{\R^d}\cong \R^{2d}$, Parseval module frames with elements in Schwartz space $\mathcal{S}(\R^d)$ would give the Heisenberg module a $QC^{\infty}$-structure. However, the Feichtinger algebra approach gives the possibility of finding Parseval module frames which give the Heisenberg module a $QC^k$-structure, which is not simultaneously a $QC^{k+1}$-structure. We give some examples for the noncommutative $2$-torus in \cref{Sec:NC2T}.

\subsection{The link to Gabor analysis}
\label{sec:Link-Gab-an}
The existence of sufficiently regular approximate identities from \cref{def:QCk-structure-mods} is in the setting of Heisenberg modules a result about existence of multiwindow Gabor frames with windows in suitably weighted Feichtinger algebras.

The following result is a special case of \cite[Theorem 3.11]{AuEn19}.
\begin{prop}
	\label{thm:generators_multiwindow}
	Let $G$ be a second countable LCA group, let $\La \subset \tfp{G}$ be a lattice, and let $g_1, \ldots, g_l$ be elements of the Heisenberg module $\heis{G}{\La}$. Then the following are equivalent:
	\begin{enumerate}
		\item The set $\{ g_1, \ldots, g_l \}$ is a Parseval module frame for $\heis{G}{\La}$ as a left $C^*(\La,c)$-module. That is, for all $f \in \heis{G}{\La}$ we have
		\[ f = \sum_{j=1}^l \lhs{f}{g_j}  g_j = \sum_{j=1}^l f\rhs{g_j}{g_j} .\]
		\item The system
		\[ \mathcal{G}(g_1, \ldots, g_l ; \La) = \{ \pi(\la) g_j : \la \in \La, 1 \leq j \leq l \} \]
		is a Parseval multiwindow Gabor frame for $L^2(G)$.
	\end{enumerate}
\end{prop}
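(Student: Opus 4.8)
The plan is to realize both conditions as a single algebraic identity in $C^*(\Lao,\overline{c})$, namely $\sum_{i=1}^l \rhs{g_i}{g_i} = 1_{C^*(\Lao,\overline{c})}$, by identifying the module frame operator of $(g_i)_{i=1}^l$ with the multiwindow Gabor frame operator $S_{g_1,\ldots,g_l,\La}$. First I would unpack the left inner product and the left module action. Since $\lhs{f}{g_i} = (\hs{f}{\pi(\la)g_i})_{\la\in\La}$ and the left action is $a\cdot g_i = \sum_{\la\in\La} a(\la)\pi(\la)g_i$, one computes for $f$ in the dense submodule $M^1(G)$ that
\begin{equation*}
\modft_{(g_i)}f = \sum_{i=1}^l \lhs{f}{g_i}\,g_i = \sum_{i=1}^l\sum_{\la\in\La}\hs{f}{\pi(\la)g_i}\,\pi(\la)g_i = S_{g_1,\ldots,g_l,\La}f,
\end{equation*}
where every sum converges in $M^1(G)\subset L^2(G)$ because $(\hs{f}{\pi(\la)g_i})_\la\in\ell^1(\La)$ by \cref{prop:cty-and-sampling-props}. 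Thus the module frame operator and the Gabor frame operator agree on $M^1(G)$.

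Next I would pass to $L^2(G)$ using \cref{prop:algs-and-mod-are-fct-spaces}, which supplies the dense continuous embedding $\heis{G}{\La}\hookrightarrow L^2(G)$. Applying the equivalence bimodule relation $\lhs{f}{g_i}g_i = f\rhs{g_i}{g_i}$ termwise gives
\begin{equation*}
\modft_{(g_i)}f = f\cdot\Big(\sum_{i=1}^l\rhs{g_i}{g_i}\Big) = f\cdot b, \qquad b:=\sum_{i=1}^l\rhs{g_i}{g_i}\in C^*(\Lao,\overline{c}).
\end{equation*}
The right action of $b$ is realized on $L^2(G)$ by $f\mapsto\sum_{\lao\in\Lao}b(\lao)\pi(\lao)^*f$, which extends to a bounded operator $R_b$ since the adjoint-lattice representation of $C^*(\Lao,\overline{c})$ on $L^2(G)$ is bounded and faithful. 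Combining with the previous display, $S_{g_1,\ldots,g_l,\La}$ and $R_b$ agree on the dense subspace $M^1(G)$; since $\phi\mapsto\sum_i\sum_\la|\hs{\phi}{\pi(\la)g_i}|^2 = \hs{R_b\phi}{\phi}$ on $M^1(G)$ is bounded there by $\|R_b\|\,\|\phi\|_2^2$ and is lower semicontinuous on $L^2(G)$, the $g_i$ are Bessel and the Gabor frame operator is bounded, whence $S_{g_1,\ldots,g_l,\La}=R_b$ on all of $L^2(G)$.

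Finally I would read off the equivalence. Condition (2) says $S_{g_1,\ldots,g_l,\La}=\Id$ on $L^2(G)$, i.e. $R_b=\Id$, which by faithfulness of the representation is equivalent to $b=1_{C^*(\Lao,\overline{c})}$. Condition (1) says $\modft_{(g_i)}=\Id$ on the module, i.e. $f\cdot b=f$ for all $f$, which by faithfulness of the right action of the unital algebra $C^*(\Lao,\overline{c})$ is again equivalent to $b=1_{C^*(\Lao,\overline{c})}$ (in agreement with \cref{prop:results-from-aujalu}). Hence (1) $\Leftrightarrow$ (2). The main obstacle is precisely the passage carried out in the middle paragraph: one must guarantee that the abstractly defined module frame operator and the analytically defined Gabor frame operator coincide as \emph{bounded} operators on $L^2(G)$. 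This is where the embedding \cref{prop:algs-and-mod-are-fct-spaces}, the bimodule identity, and boundedness of the adjoint-lattice representation do the essential work, and the subtle point is deducing Bessel-ness of arbitrary module elements $g_i\in\heis{G}{\La}$ rather than assuming it.
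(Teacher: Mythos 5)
The paper does not actually prove this proposition; it is quoted as a special case of \cite[Theorem 3.11]{AuEn19}. Your reduction of both conditions to the single identity $\sum_i\rhs{g_i}{g_i}=1_{C^*(\Lao,\overline{c})}$, via the identification of the module frame operator with the multiwindow Gabor frame operator and the faithfulness of the two representations, is exactly the right architecture and is essentially how the cited result is obtained. The endgame is also fine: $S_{g_1,\ldots,g_l,\La}=\Id$ iff $R_b=\Id$ iff $b=1$ by faithfulness on $L^2(G)$, and $\modft_{(g_i)}=\Id$ iff $b=1$ by faithfulness of the right action on the equivalence bimodule, consistent with \cref{prop:results-from-aujalu}.

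There is, however, one concrete gap, precisely at the point you flag as subtle. In your first display you justify the convergence of $\sum_i\sum_\la\hs{f}{\pi(\la)g_i}\pi(\la)g_i$ by invoking \cref{prop:cty-and-sampling-props}(iii), but that result requires \emph{both} arguments to lie in $M^1(G)$, whereas the proposition allows arbitrary $g_i\in\heis{G}{\La}$; for such $g_i$ the sequence $(\hs{f}{\pi(\la)g_i})_\la$ is a priori only in $\ell^2(\La)$ (via \cref{prop:algs-and-mod-are-fct-spaces}), and summing $\sum_\la a(\la)\pi(\la)g_i$ for $a\in\ell^2(\La)$ already presupposes that $g_i$ is Bessel --- which is what you are trying to prove. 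Two repairs are available. First, one must check that for $f\in M^1(G)$ and general $g_i\in\heis{G}{\La}$ the $\la$-th component of the abstractly defined $\lhs{f}{g_i}\in C^*(\La,c)\hookrightarrow\ell^2(\La)$ really is the $L^2$-inner product $\hs{f}{\pi(\la)g_i}$; this follows by approximating $g_i$ in $\heis{G}{\La}$-norm by elements of $M^1(G)$ and using continuity of both embeddings of \cref{prop:algs-and-mod-are-fct-spaces}. Second, to get the quadratic form identity without expanding the divergent-looking sum, bypass the synthesis step entirely: use the bimodule relation $\lhs{f}{g_i}g_i=f\rhs{g_i}{g_i}$ (valid on the completion by continuity) to write $\modft_{(g_i)}f=f\cdot b$ directly, and then compute $\hs{f\cdot b}{f}=\sum_i\tr\big(\lhs{f}{g_i}\lhs{f}{g_i}^*\big)=\sum_i\|\lhs{f}{g_i}\|_{\ell^2(\La)}^2=\sum_i\sum_\la|\hs{f}{\pi(\la)g_i}|^2$ using the polarized trace identity $\tr(\lhs{f}{h})=\hs{f}{h}$ from the proof of \cref{prop:algs-and-mod-are-fct-spaces}. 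With that in place your lower-semicontinuity argument correctly yields the Bessel property for arbitrary module elements, and the rest of the proof goes through.
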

The following is then immediate by \cref{section:modulation-spaces-as-smooth-modules} and \cref{thm:generators_multiwindow}.
\begin{thm}
Let $v$ be a weight on $\tfp{G}$, let $\La$ be a lattice in $\tfp{G}$, let $f$ be a spectral triple compatible function for $v$ with respect to $\La$, and let $C^*(\La,c)$ be given a $QC^k$-structure by ways of \cref{prop:n-geq-k+1-gives-QCk} or \cref{prop:n-geq-k+1-gives-QCk-2} for some $k \in \N$. Then a Parseval multiwindow Gabor frame $\GS (g_1, \ldots , g_l ; \La)$ for $L^2 (G)$ with $g_j \in M^1_{v^{n}}(G)$, $j=1,\ldots,l$, $n\geq k+1$, gives the Heisenberg module $\heis{G}{\La}$ the structure of a $QC^k$-module over $C^* (\La,c)$.
	%Let $\La \subset \tfp{G}$ be a lattice in a second countable LCA group $G$, let $v$ be a commutator bounded weight on $\La$, and let $f: \La \to \R$ be spectral triple compatible with respect to $v$. Moreover, let $n \geq k+1$, so that 
	%\begin{equation*}
	%	(\ell^1_{v^{n}} (\La,c) , \ell^2 (\La) \oplus \ell^2 (\La), D )
	%\end{equation*}
	%gives a $QC^k$-structure on $C^* (\La,c)$, where $D$ is defined by either \eqref{eq:def-D-spectral-op} or \eqref{eq:def-D-2}. Then a Parseval multiwindow Gabor frame $\GS (g_1, \ldots , g_l ; \La)$ for $L^2 (G)$ with $g_j \in \ell^1_{v^{n}}(G)$ gives the Heisenberg module $\heis{G}{\La}$ the structure of a $QC^k$-module over $C^* (\La,c)$.
\end{thm}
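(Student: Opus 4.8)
The plan is to chain the Gabor–module frame dictionary of \cref{thm:generators_multiwindow} with the algebraic $QC^k$-structure established in \cref{prop:n-geq-k+1-gives-QCk} and \cref{prop:n-geq-k+1-gives-QCk-2}, and then match the resulting data against \cref{def:QCk-structure-mods}. Since all the genuine analysis has been front-loaded into those earlier results, the argument will amount to a careful bookkeeping of which weighted subalgebra the relevant inner products land in.

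First I would apply \cref{thm:generators_multiwindow}: because $\GS(g_1, \ldots, g_l; \La)$ is a Parseval multiwindow Gabor frame for $L^2(G)$, the finite sequence $\{g_1, \ldots, g_l\}$ is a Parseval module frame for $\heis{G}{\La}$ as a left $C^*(\La,c)$-module, yielding the reconstruction identity
\begin{equation*}
	f = \sum_{j=1}^l \lhs{f}{g_j} g_j = \sum_{j=1}^l \modft_{g_j, g_j} f
\end{equation*}
for every $f \in \heis{G}{\La}$, so that $\sum_{j=1}^l \modft_{g_j, g_j} = \Id$. Extending the sequence by zeros (setting $g_i = 0$ for $i > l$) and putting $e_m = \sum_{i=1}^m \modft_{g_i, g_i}$ then produces a uniformly norm bounded approximate unit of the form \eqref{eq:approx-id-form}, which in fact stabilizes at the identity for $m \geq l$.

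Next I would verify the regularity condition of \cref{def:QCk-structure-mods}, namely that $\lhs{g_i}{g_j} \in QC^*(\La,c)_k$ for all $i,j$. By \eqref{eq:left-inner-product} one has $\lhs{g_i}{g_j} = (\hs{g_i}{\pi(\la)g_j})_{\la \in \La}$, and since $g_i, g_j \in M^1_{v^n}(G)$, an application of \cref{prop:cty-and-sampling-props}(iii) with the weight $v^n$ (which is again a weight) shows that this coefficient sequence lies in $\ell^1_{v^n}(\La)$; hence $\lhs{g_i}{g_j} \in \ell^1_{v^n}(\La,c)$. Because $n \geq k+1$, the containment $\ell^1_{v^n}(\La,c) \subset QC^*(\La,c)_k$ furnished by \cref{prop:n-geq-k+1-gives-QCk} (or \cref{prop:n-geq-k+1-gives-QCk-2}) then gives $\lhs{g_i}{g_j} \in QC^*(\La,c)_k$ for all $i,j$, with the zero-padded entries handled trivially.

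Combining these two observations with \cref{def:QCk-structure-mods} shows that $(\heis{G}{\La}, (e_m)_{m=1}^\infty)$ is a $QC^k$-module over $C^*(\La,c)$. There is no serious obstacle here: the Gabor/module frame correspondence, the sampling estimate, and the spectral-triple computation have all been proved earlier. The only point deserving a moment's attention is the alignment of weights, namely that \cref{prop:cty-and-sampling-props} must be invoked with $v^n$ rather than $v$ so that the inner products land in precisely the subalgebra $\ell^1_{v^n}(\La,c)$ for which the $QC^k$-containment is available. Once the window regularity $g_j \in M^1_{v^n}(G)$ and the exponent bound $n \geq k+1$ are matched in this way, the conclusion is immediate.
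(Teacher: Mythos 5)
Your proposal is correct and follows the same route the paper intends: the paper dismisses this theorem as ``immediate'' from \cref{thm:generators_multiwindow} together with the argument of \cref{section:modulation-spaces-as-smooth-modules}, and your write-up simply makes explicit the two steps that argument consists of (the Parseval module frame gives a unit of the form \eqref{eq:approx-id-form}, and \cref{prop:cty-and-sampling-props}(iii) applied with the weight $v^n$ places the inner products $\lhs{g_i}{g_j}$ in $\ell^1_{v^n}(\La,c)\subset QC^*(\La,c)_k$). Your remark about invoking the sampling estimate with $v^n$ rather than $v$ is exactly the right bookkeeping point.
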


\subsection{Example: The noncommutative $2$-torus}
\label{Sec:NC2T}
	We refer the reader to \cite{caphre11} or \cite{Va06} for details on this example. What follows will also build on \cref{ex:Heis-mod-for-R2}.
	
	On the noncommutative $2$-torus, denoted $C^* (\La,c)$ in this section, there are two canonical unbounded derivations denoted by $\partial_1$ and $\partial_2$. They are given by
	\begin{equation*}
		\begin{split}
		\partial_1 &: (a(x,\omega))_{(x,\omega)\in \La} \mapsto (2\pi i x a(x,\omega))_{(x,\omega)\in \La} \\
		\partial_2 &: (a(x,\omega))_{(x,\omega)\in \La} \mapsto (2\pi i \omega a(x,\omega))_{(x,\omega)\in \La},
		\end{split}
	\end{equation*}
	for $(a(x,\omega))_{(x,\omega)\in \La} \in C^* (\La,c)$.
	These are only densely defined, but we see that $\ell^1_v(\La,c)\subset \Dom \partial_i$ for $i=1,2$, where $v$ is the weight $v(x,\omega) = (1+x^2 + \omega^2)^{1/2}$. %Then $v$ is a commutator bounded weight. 
	In the rest of this section $v$ will denote this weight. We may then consider the triple for the noncommutative $2$-torus given by
	\begin{equation*}
		(\ell^1_v (\La,c) , \ell^2 (\La) \oplus \ell^2 (\La), D )
	\end{equation*}
	where $D$ is the unbounded operator given by
	\begin{equation}
		D = \begin{pmatrix}
		0 & \partial_1 + i \partial_2 \\
		- \partial_1 + i\partial_2 & 0
		\end{pmatrix}.
	\end{equation}

	\begin{lemma}
	\label{lemma:canonical-spec-trip}
	The triple 
	\begin{equation*}
		(\ell^1_v (\La,c) , \ell^2 (\La) \oplus \ell^2 (\La), D )
	\end{equation*}
	defined above is a spectral triple for $C^* (\La,c)$.
	\end{lemma}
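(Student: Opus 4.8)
The plan is to verify the three defining conditions of a spectral triple for the operator
\begin{equation*}
	D = \begin{pmatrix} 0 & \partial_1 + i\partial_2 \\ -\partial_1 + i\partial_2 & 0 \end{pmatrix}
\end{equation*}
on $\ell^2(\La)\oplus\ell^2(\La)$, following the strategy already carried out in \cref{prop:n-geq-k+1-gives-QCk}. The key observation is that $\partial_1$ and $\partial_2$ are multiplication operators: $\partial_j$ acts on a sequence $(a(\xi))_{\xi\in\La}$ by multiplying with $2\pi i$ times the relevant coordinate of $\xi=(x,\omega)$. Hence both off-diagonal entries of $D$ are multiplication operators, and the weight $v(x,\omega)=(1+x^2+\omega^2)^{1/2}$ dominates these coordinate functions: concretely $|2\pi x|, |2\pi\omega| \leq 2\pi\, v(x,\omega)$. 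This means $D$ is, up to the algebraic bookkeeping of the two components, a multiplication operator whose symbol is pointwise bounded by a multiple of $v$, so the estimates reduce to essentially the same computations as in \cref{prop:n-geq-k+1-gives-QCk} with $f(v)$ replaced by the coordinate multipliers.

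First I would record that $D$ is self-adjoint on the natural domain $\Dom(D) = \{(b,b')^T \mid (\partial_1\pm i\partial_2)b, (\partial_1\pm i\partial_2)b' \in \ell^2(\La)\}$, since it is a multiplication-type operator with real-linear symbol (the diagonal-free form with the stated entries is formally self-adjoint, and self-adjointness follows as for any multiplication operator). Next I would check $a\cdot\Dom(D)\subset\Dom(D)$ for $a\in\ell^1_v(\La,c)$: writing out $(\partial_1+i\partial_2)(a *_c b)$ and using that the coordinate multiplier evaluated at $\la$ splits as a sum of the multiplier at $\mu$ and at $\la-\mu$ (the coordinates are additive under $+$), one bounds the result by a constant times $\|v\,a\|_{\ell^1}\|v\,b\|_{\ell^2}$ exactly as in the $f(v)$ computation, using $|c|=1$ and $\|v a\|_{\ell^2}\leq\|va\|_{\ell^1}=\|a\|_{\ell^1_v}$. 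For the bounded-commutator condition, the crucial algebraic fact is that the coordinate multiplier is \emph{additive}, so in $[\,\partial_j, a\,](b)$ the multiplier $\xi_j$ at $\la$ minus at $\la-\mu$ collapses to the multiplier at $\mu$; this is the analogue of the difference bound in \cref{def:spec-trip-comp-fct}(i) but here it is an exact identity rather than an inequality, giving $\|[D,a]b\|_{\ell^2}\leq C\|a\|_{\ell^1_v}\|b\|_{\ell^2}$. Finally, compactness of $a(1+D^2)^{-1/2}$ follows verbatim from the argument in \cref{prop:n-geq-k+1-gives-QCk}: approximate $a$ by finitely supported sequences and use Young's inequality together with $\|(1+D^2)^{-1/2}\|\leq 1$.

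The main obstacle, and the only place where this genuinely differs from \cref{prop:n-geq-k+1-gives-QCk}, is handling the off-diagonal coupling of the two components by $D$ together with the fact that the symbol is now \emph{complex} (the combinations $\partial_1\pm i\partial_2$), rather than the single real multiplier $f(v)$. I expect this to be purely bookkeeping: the modulus of each entry satisfies $|2\pi i x \pm 2\pi i\omega| \leq 2\pi\sqrt{2}\, v(x,\omega)$, so every scalar estimate still closes with $v$ on the right-hand side, and the $2\times 2$ matrix structure only requires checking the two components separately (as was already done in \cref{prop:n-geq-k+1-gives-QCk} by treating one component and invoking symmetry). The one point to state carefully is that $\ell^1_v(\La,c)\subset\Dom(\partial_j)$, which is already asserted in the text and follows from $|x|,|\omega|\leq v(x,\omega)$; I would cite this inclusion and then assemble the three verified conditions to conclude that the triple is a spectral triple for $C^*(\La,c)$.
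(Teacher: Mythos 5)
Your proposal is correct and follows essentially the same route as the paper's proof: the additivity of the coordinate multipliers is exactly the Leibniz rule the paper invokes, your ``exact identity'' for the commutator is the paper's observation that $[D,a]$ extends to left multiplication by the matrix with entries $\pm\partial_1(a)+i\partial_2(a)$ (bounded since these lie in $\ell^1(\La,c)$ when $a\in\ell^1_v(\La,c)$), and compactness of $a(1+D^2)^{-1/2}$ is handled identically by reduction to \cref{prop:n-geq-k+1-gives-QCk}.
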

	\begin{proof}
	    For $a \in \ell^1_v (\La,c)$ it follows by the Leibniz rule for $\partial_i$, $i=1,2$, that $a \cdot \Dom (D) \subset \Dom (D)$. Moreover, a standard calculation will show that the commutator $[D,a]$ extends to left multiplication by the matrix
	    \begin{equation}
	        \begin{pmatrix}
	        0 & \partial_1 (a) + i \partial_2 (a) \\
	        - \partial_1 (a) + i\partial_2 (a) & 0
	        \end{pmatrix},
	    \end{equation}
	    which is a bounded operator. That $a(1+D^2)^{-1/2}$ extends to a compact operator follows as in the proof of \cref{prop:n-geq-k+1-gives-QCk}.
	\end{proof}
	\begin{rmk}
	    The spectral triple of \cref{lemma:canonical-spec-trip} is also known as the \emph{canonical spectral triple for the noncommutative $2$-torus}. However, the $*$-subalgebra of $C^* (\La,c)$ typically chosen is the one consisting of the Schwartz sequences. 
	\end{rmk}
	
	$D$ is a selfadjoint operator and $D^2$ is the multiplication operator given by
	\begin{equation*}
		D^2 = 
		\begin{pmatrix}
		4\pi^2 (x^2 + \omega^2) & 0 \\
		0 & 4\pi^2 (x^2 + \omega^2)
		\end{pmatrix}.
	\end{equation*}
	We now let $f(v) = 2\pi (v^2 -1)^{1/2}$. Then $f$ is spectral triple compatible for $v$ with respect to any lattice $\La \subset \R^2$, and we obtain
	\begin{equation*}
		\begin{pmatrix}
		f(v) & 0 \\
		0 & f(v) 
		\end{pmatrix}
		= 
		\begin{pmatrix}
		2\pi (x^2 + \omega^2)^\frac{1}{2} & 0 \\
		0 & 2\pi (x^2 + \omega^2)^\frac{1}{2} 
		\end{pmatrix}
		= |D|.
	\end{equation*}
	Hence 
	\begin{equation*}
	(\ell^1_v (\La,c) , \ell^2 (\La) \oplus \ell^2 (\La), |D| )
	\end{equation*}
	which we create by ways of \cref{section:smooth-structures-algebra} is related to the canonical spectral triple for the noncommutative $2$-torus. By \cref{prop:n-geq-k+1-gives-QCk-2}, $$(\ell^1_{v^{n}} (\La,c) , \ell^2 (\La) \oplus \ell^2 (\La), |D| )$$ equips the noncommutative $2$-torus with a $QC^k$-structure if $n \geq k+1$. However $$(\ell^1_{v^{n}} (\La,c) , \ell^2 (\La) \oplus \ell^2 (\La), D )$$ also equips the noncommutative $2$-torus with a $QC^k$-structure if $n \geq k+1$. We saw that it defined a spectral triple in \cref{lemma:canonical-spec-trip}. That $a \in \ell^1_{v^{n}} (\La,c)$ is such that $a \in \Dom(\ad^k (|D|)$ for $k \geq n+1$ follows exactly as in the proof of \cref{prop:n-geq-k+1-gives-QCk}. If we realize that $\partial_1 (a) + i \partial_2 (a) \in \ell^1_{v^{n-1}} (\La,c)$ and  $- \partial_1 (a) + i\partial_2 (a) \in \ell^1_{v^{n-1}} (\La,c)$, it also follows that $[D,a] \in \Dom (\ad^k (|D|))$ for $n \geq k+1$ by essentially the same argument as in the proof of \cref{prop:n-geq-k+1-gives-QCk}, since we did that proof looking only at one component. Hence the twisted convolution algebra $\ell^1_{v^{n}} (\La,c)$ becomes a suitable $*$-subalgebra to give the noncommutative $2$-torus a $QC^k$-structure for $n \geq k+1$ both for the canonical spectral triple and for the spectral triple constructed by ways of \cref{section:smooth-structures-algebra}.

By \cref{sec:Link-Gab-an} we may then equip Heisenberg modules with $QC^k$-structures by finding suitably regular multiwindow Gabor frames. We illustrate this with some examples. Note however that there are very few functions $g \in L^2 (\R)$ for which the set 
\begin{equation*}
    \{ \La \subset \tfp{\R} \mid \text{ $\La$ is a lattice and $\GS (g; \La)$ is a frame for $L^2 (\R)$}\}
\end{equation*}
is known.
\begin{exmp}[$QC^\infty$-structures]
Let $\La = \alpha \Z \times \beta \Z$ be a lattice in $\R^2$ with $\alpha,\beta >0$ and $\alpha \beta <1$. This yields a Heisenberg module $\heis{\R}{\La}$ by the constructions above. A celebrated result in time-frequency analysis tells us that time-frequency shifts of the Gaussian $g (t) = 2^{1/4} e^{-\pi t^{2}}$ determines a Gabor frame $\GS(g, \alpha \Z \times \beta \Z)$ for $L^2 (\R)$ if and only if $\alpha \beta <1$, see \cite{ly92}, \cite{se92-1}. If $S$ then denotes the frame operator with respect to $g$, $\GS (S^{-1/2} g, \alpha \Z \times \beta \Z)$ is a Parseval frame for $L^2 (\R)$. By \cref{prop:results-from-aujalu} it follows that if $g \in M^1_{v^{s}} (\R)$, $s\in [0,\infty)$, so is $S^{-1/2} g$. But $g$ is a Schwartz function, hence it is in $\cap_{s\geq 0} M^1_{v^s} (\R) = \mathcal{S}(\R)$ \cite[Proposition 11.3.1]{gr01}, where $\mathcal{S}(\R)$ denotes the Schwartz functions on $\R$. It follows that $\{ S^{-1/2} g \}$ gives the Heisenberg module $\heis{\R}{\La}$ a $QC^{\infty}$-structure for all $\alpha \beta <1$.
\end{exmp}
\begin{exmp}[$QC^k$-structure]
Let $g$ be a function in $M^1_{v^{k+1}}(\R)$. Then by \cite[p. 120]{gr01} $\GS (g;\La)$ is a frame for $L^2 (\R)$ for some $\La = \alpha \Z \times \beta \Z$, as long as $\alpha, \beta>0$ are small enough. Let $S$ be the frame operator of $g$. Then as above it follows that $S^{-1/2} g \in M^1_{v^{k+1}}(\R)$ also. As in the previous example it follows that $\{S^{-1/2}g\}$ then gives $\heis{\R}{\La}$ a $QC^k$-structure.

For explicit examples of $QC^k$-structures on Heisenberg modules that are not simultaneously $QC^{\infty}$-structures one may use B-splines $B_N$, see \cite[Section A.8, Section 11.7]{Chr16}. It is known that $\GS(B_N, \La)$ is a frame for $L^2 (\R)$ whenever $\La = \alpha \Z \times \beta \Z \subset \R^2$ is such that $\alpha \in (0,N)$ and $\beta \in (0,1/N]$ \cite[Corollary 11.7.1]{Chr16}. Values of $k$ for which a given $B_N$ gives a Heisenberg module a $QC^k$-structure can be done via the Rihaczek distribution $R(g,g)(x,\omega)=g(x)\overline{\widehat{g}}(\omega)e^{-2\pi ix\omega}$.
\end{exmp}

For the following example, note that if $g \in L^2 (\R)$ and $\La \subset \tfp{\R}$ is so that $\GS(g;\La)$ is a frame for $L^2 (\R)$, then $s(\La) \leq 1$ \cite{rast95}. %, where $vol (\La)$ is the volume of the lattice $\La$ \cite{rast95}. 
For $\La = \alpha \Z \times \beta \Z$, $\alpha,\beta >0$, $s(\La) = \alpha\beta$. However, even for $s(\La) > 1$ we may construct Heisenberg modules. To obtain $QC^k$-structures on such Heisenberg modules $\heis{\R}{\La}$ we need several generators. %Note that \cite[Theorem 5.1]{JaLu18} is valid in our situation, even with weights, since all that is needed is spectral invariance of the Banach $*$-subalgebras $M^1_v (\La)$ in $C^* (\La,c)$. 
\begin{exmp}[Multiple generators] Suppose $\GS(g;\La)$ is a Gabor system for $L^2 (\R)$ and $s(\Lambda)\in[l-1,l)$. Then there exist points $z_1,...,z_l$ in $\RR^2$ and a lattice $\Lambda_0$ such that $\Lambda=z_1\Lambda_0\cup\cdots\cup z_n\Lambda_0$ with $s(\Lambda_0)<1$, see  the proof of \cite[Corollary 5.6]{JaLu18}. Hence if $\GS(g,\Lambda_0)$ is a Gabor frame, then $\GS(\pi(z_1)g,...,\pi(z_l)g;\La)$ is a multi-window Gabor frame for $L^2(\R)$.  

In particular, let $g$ be the Gaussian and let $\alpha\beta$ be in $[l-1,l)$ for some $n\in\N$. Then there exist $z_1,...,z_l$ in $\RR^2$ such that $\GS(\pi(z_1)g,...,\pi(z_l)g;\alpha\ZZ\times\beta\ZZ)$ is a multiwindow Gabor frame for $L^2(\R)$. Hence if $S$ is the multiwindow frame operator for $\GS(\pi(z_1)g,...,\pi(z_l)g;\alpha\ZZ\times\beta\ZZ)$, then  $\{ S^{-1/2}g_1, \ldots,S^{-1/2}g_n \}$ implements a $QC^\infty$-structure on $\heis{\R}{\La}$. 
\end{exmp}

\subsection{Example: The noncommutative solenoid}

Noncommutative solenoids have attracted some interest in the theory of operator algebras \cite{LaPa13,LaPa15} and time-frequency analysis. We follow the presentation in  \cite{EnJaLu19}, where Heisenberg modules over noncommutative solenoids have been linked with Gabor frames for lattices in $\R\times\Qp$. 

Given a prime number $p$, the \emph{$p$-adic absolute value} on $\Q$ is defined by $|x|_p = p^{-k}$, where $x = p^k(a/b)$ and $p$ divides neither $a$ nor $b$. For $x=0$ we set $|0|_p = 0$. The $p$-adic absolute value satisfies the ultrametric triangle inequality, that is,
\begin{equation}
| x + y |_p \leq \max \{ |x|_p, |y|_p \}. \label{eq:ultrametric}
\end{equation}
The completion of $\Q$ with respect to the metric $d_p(x,y) = |x-y|_p$ is a field denoted by $\Q_p$ and its elements are called \emph{$p$-adic numbers}. The topology inherited from the metric makes $\Q_p$ into a locally compact Hausdorff space. Moreover, $\Qp$ is a second countable locally compact abelian group with respect to the topology induced by the above metric and under addition. One can show that every $p$-adic number $x$ has a \emph{$p$-adic expansion} of the form
\[ x = \sum_{k=-\infty}^\infty a_k p^k, \]
where $a_k \in \{ 0, \ldots, p-1 \}$ for each $k$ and there exists some $n \in \Z$ such that $a_k =0$ for all $k < n$. The sequence $(a_k)_{k 
\in \Z}$ in this expansion is unique.

The closed unit ball in $\Q_p$ is denoted by $\Z_p$ and its elements are called \emph{$p$-adic integers}. Because of (\ref{eq:ultrametric}) and the multiplicativity of $| \cdot |_p$, $\Z_p$ is a subring of $\Q_p$. In terms of $p$-adic expansions, a $p$-adic number $x = \sum_{k \in \Z} a_k p^k$ is a $p$-adic integer if and only if $a_k = 0$ for $k < 0$. The map $ \{ 0, \ldots, p-1 \}^{\N} \to \Z_p$ given by $(a_k)_k \mapsto \sum_k a_k p^k$ is a homeomorphism, which shows that $\Z_p$ has the topology of a Cantor set. In particular, $\Z_p$ is a compact subgroup of $\Q_p$. But $\Z_p$ is also open in $\Q_p$. Indeed, if $x \in \Z_p$, then using (\ref{eq:ultrametric}) one shows that the open ball $B_{1/2}(x) = \{ y \in \Q_p : |y-x|_p < 1/2 \}$ is contained in $\Z_p$.

We take the Haar measure $\mu_{\Qp}$ on $\Q_p$ so that $\mu_{\Qp}(\Zp)=1$. The Haar measure on $\Z_p$ is the one on $\Q_p$ restricted to $\Z_p$.

We denote by $\Z[1/p]$ the subring of $\Q$ consisting of rational numbers of the form $a/p^k$ where $k,a\in \Z$. Then $\Q_p = \Z_p + \Z[1/p]$ and $\Z_p \cap \Z[1/p] = \Z$, so that
\[ \Q_p / \Z_p = \frac{\Z_p + \Z[1/p]}{ \Z_p} \cong \frac{ \Z[1/p] }{ \Z_p \cap \Z[1/p] } = \Z[1/p] / \Z \]
as abelian groups. Denote the quotient map $\Q_p \to \Z[1/p]/\Z$ by $x \mapsto \{ x \}_p$. In terms of $p$-adic expansions, we have $\left\{ \sum_{k \in \Z} a_k p^k \right\}_p = \sum_{k=-\infty}^{-1} a_k p^k + \Z$ (observe that for any $p$-adic number $x$ only finitely many of the $a_{k}$ are nonzero). Every character $\omega \in \widehat{\Q}_p$ is of the form
\[ \omega: \Qp \to \C, \ \omega(x) = e^{2\pi i \{ xy \}_p }, \ x \in \Qp, \]
for some $y \in \Q_p$. In fact, the map $\Q_p \to \widehat{\Q}_p$ given by mapping $y$ to the $\omega$ defined above is a topological isomorphism. Hence the Pontryagin dual of $\Q_p$, $\widehat{\Q}_{p}$, can be identified with $\Q_p$ itself.

Every $y=(y_{\infty},y_{p})\in \R\times\Qp$ defines a character $\omega_{y}\in \widehat{\R}\times\widehat{\Q}_{p}$ via
\begin{equation} \label{eq:2006b} \omega_{y} : \R\times\Qp, \ x=\big(x_{\infty},x_{p}\big) \mapsto e^{2\pi i (x_{\infty} y_{\infty}-\{ x_{p} y_{p}\}_{p})}.
\end{equation}
One can show that every character in $\widehat{\R}\times\widehat{\Q}_{p}$ is given as in \eqref{eq:2006b} for some $y\in \R\times\Qp$.

There is an abundance of lattices in $\R\times\Qp$. This is well-known and can be found in, e.g., \cite{LaPa15}.
\begin{prop}
Let $p$ be a prime number. For any $\alpha \in \R \setminus \{ 0 \}$ the mapping
\[ \psi_{\alpha}: \Z[1/p]\to\R\times\Qp, \, \psi_{\alpha}(q) = (\alpha q, q )\]
embeds $\Z[1/p]$ as a lattice into $\R\times\Qp$. The set
$B_{\alpha} = [0,|\alpha|) \times \Z_p$
is a fundamental domain for $\psi_{\alpha}(\Z[1/p])$ in $\R\times\Qp$ and $s(\psi_{\alpha}(\Z[1/p])=\vert \alpha\vert$. 
Moreover, under the identification of $\R\times\Qp$ with $\widehat{\R}\times\widehat{\Q}_{p}$ as in \eqref{eq:2006b}, the group $\psi_{\alpha}(\Z[1/p])^{\perp}$ can be identified with $\psi_{1/\alpha}(\Z[1/p])$.
\end{prop}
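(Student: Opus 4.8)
The plan is to treat the four assertions in turn: that $\psi_\alpha$ is an injective homomorphism onto a lattice, that $B_\alpha$ is a fundamental domain, that $s(\psi_\alpha(\Z[1/p]))=|\alpha|$, and the duality $\psi_\alpha(\Z[1/p])^\perp=\psi_{1/\alpha}(\Z[1/p])$. The first is immediate: $\psi_\alpha(q+q')=\psi_\alpha(q)+\psi_\alpha(q')$, and $\psi_\alpha(q)=0$ forces $q=0$ from either coordinate, so $\psi_\alpha$ is an injective group homomorphism. Rather than verify discreteness and cocompactness separately, I would deduce both from the fundamental domain claim, since exhibiting a relatively compact fundamental domain (note $\overline{B_\alpha}=[0,|\alpha|]\times\Zp$ is compact) yields cocompactness, while the tiling property forces $\psi_\alpha(\Z[1/p])\cap(B_\alpha-B_\alpha)=\{0\}$, giving discreteness at $0$ and hence everywhere.

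For the fundamental domain claim, I would show that every $(x_\infty,x_p)\in\R\times\Qp$ is congruent modulo $\psi_\alpha(\Z[1/p])$ to exactly one point of $B_\alpha$. Concretely I need a unique $q\in\Z[1/p]$ with $x_p-q\in\Zp$ and $x_\infty-\alpha q\in[0,|\alpha|)$. The isomorphism $\Qp/\Zp\cong\Z[1/p]/\Z$ recalled in the excerpt provides, for the first condition, a solution $q_0\in\Z[1/p]$ that is unique up to the subgroup $\Zp\cap\Z[1/p]=\Z$; that is, the admissible $q$ are exactly $q_0+\Z$. Among these, since $\{\alpha q:q\in q_0+\Z\}$ is a coset of $\alpha\Z=|\alpha|\Z$ and $[0,|\alpha|)$ is a fundamental domain for the translation action of $|\alpha|\Z$ on $\R$, there is a unique choice making $x_\infty-\alpha q\in[0,|\alpha|)$. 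This establishes existence and uniqueness simultaneously; for the clean disjointness statement I would separately note $B_\alpha-B_\alpha=(-|\alpha|,|\alpha|)\times\Zp$ and that $\psi_\alpha(r)$ lies in this set only for $r\in\Z$ with $|r|<1$, i.e. $r=0$. The size is then read off via Weil's formula from Convention~\ref{conv:measure-choices}: applying it to the indicator $\mathbf 1_{B_\alpha}$ and using that each coset meets $B_\alpha$ exactly once gives $s(\psi_\alpha(\Z[1/p]))=\mu_{\R\times\Qp}(B_\alpha)=|\alpha|\cdot\mu_{\Qp}(\Zp)=|\alpha|$, using the normalization $\mu_{\Qp}(\Zp)=1$.

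For the duality I would prove the two inclusions by different means. The inclusion $\psi_{1/\alpha}(\Z[1/p])\subseteq\psi_\alpha(\Z[1/p])^\perp$ is a direct computation: for $y=(q'/\alpha,q')$ with $q'\in\Z[1/p]$, formula~\eqref{eq:2006b} gives $\omega_y(\psi_\alpha(q))=e^{2\pi i\,qq'}e^{-2\pi i\{qq'\}_p}$, and since the restriction of $x\mapsto\{x\}_p$ to $\Z[1/p]$ is just the quotient map $\Z[1/p]\to\Z[1/p]/\Z$ (one may take $qq'$ itself as representative), this equals $1$ for all $q$. For the reverse inclusion I would avoid a direct character analysis and instead use a covolume count: $\psi_\alpha(\Z[1/p])^\perp$ is itself a lattice, being the annihilator of a cocompact discrete subgroup, and under the Plancherel-normalized dual measure one has $\mathrm{covol}(\psi_\alpha(\Z[1/p])^\perp)=1/\mathrm{covol}(\psi_\alpha(\Z[1/p]))=1/|\alpha|$. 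By the size computation applied to $1/\alpha$ this equals $\mathrm{covol}(\psi_{1/\alpha}(\Z[1/p]))$; since one of these lattices contains the other and they have equal covolume, the index is $1$ and they coincide.

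The main obstacle I anticipate lies in this last step: verifying that the dual measure transported through the self-dual identification \eqref{eq:2006b}—with $\Zp$ self-dual of measure $1$ and Lebesgue on $\R$—really is the standard measure on $\R\times\Qp$, so that the Plancherel duality $\mathrm{covol}(H)\,\mathrm{covol}(H^\perp)=1$ applies consistently with the normalizations fixed in Convention~\ref{conv:measure-choices}. Alongside this, the forward inclusion requires care in treating $\{\cdot\}_p$ as a coset-valued homomorphism $\Qp\to\Z[1/p]/\Z$ rather than a genuine number, so that the cancellation $e^{2\pi i\,qq'}e^{-2\pi i\{qq'\}_p}=1$ is rigorous.
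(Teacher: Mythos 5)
Your argument is correct and complete; note that the paper itself gives no proof of this proposition, deferring to \cite{LaPa15}, so there is no in-text argument to compare against. Your route is the natural one: deducing discreteness and cocompactness from the tiling by the relatively compact set $B_\alpha$, reducing existence and uniqueness of the representative to the two facts $\Qp=\Zp+\Z[1/p]$, $\Zp\cap\Z[1/p]=\Z$ together with $[0,|\alpha|)$ being a fundamental domain for $\alpha\Z$, and reading off the size from Weil's formula applied to $\mathbf 1_{B_\alpha}$. The one point you flag as a potential obstacle is genuinely the only place requiring verification, and it goes through: Lebesgue measure on $\R$ is self-dual for the pairing $(x,y)\mapsto e^{2\pi i xy}$, and the Haar measure on $\Qp$ normalized by $\mu_{\Qp}(\Zp)=1$ is self-dual for the pairing $(x,y)\mapsto e^{-2\pi i\{xy\}_p}$ (both sides assign $\Zp=\Zp^\perp$ measure $1$), so the identification \eqref{eq:2006b} carries the Plancherel-dual measure to the product measure fixed in \cref{conv:measure-choices}; the reciprocity $s(H^\perp)=s(H)^{-1}$ for a lattice $H$ with counting measure then holds exactly as in the paper's remark that the induced measure on $\Lao$ is counting measure scaled by $s(\La)^{-1}$. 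With that in place, the containment $\psi_{1/\alpha}(\Z[1/p])\subseteq\psi_\alpha(\Z[1/p])^\perp$ plus equality of sizes $|1/\alpha|=s(\psi_\alpha(\Z[1/p]))^{-1}$ forces index one, and the identification follows. A direct computation of the reverse inclusion (showing that any $y$ annihilating all $(\alpha q,q)$ must have $y_\infty=\alpha^{-1}y_p$ with $y_p\in\Z[1/p]$) is also feasible but messier; your covolume count is cleaner and matches the measure-theoretic framework the paper has already set up.
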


For the construction of smooth structures we consider the weighted Feichtinger algebras 
$M^1_{v_s}(\R\times\Qp)$, where the weight $v_s(x,\omega,q,r)=(1+|x|^2+|\omega|^2+|q|^2+|r|^2)^{s/2}$ for $s\ge 0$. It follows from, e.g., \cite[Theorem 7]{Fe81}, that the functions in $M^1_{v_s}(\R\times\Qp)$ are exactly those of the form
\begin{equation} \label{eq:1806b} f = \sum_{j\in\N} f^{(\R)}_{j} \otimes f^{(\Qp)}_{j} \ \ \text{where} \ \ f^{(\R)}_{j}\in M^1_{v_s}(\R),\, f^{(\Qp)}_{j}\in M^1_{v_s}(\Qp)\end{equation}
for all $j\in\N$ and such that $\sum_{j\in\N} \Vert f^{(\R)}_{j} \Vert_{M^1_{v_s}(\R)} \, \Vert  f^{(\Qp)}_{j} \Vert_{M^1_{v_s}(\Qp)} <\infty$.
The norm on $M^1_{v_s}(\R\times\Qp)$ is given by
\[ \Vert f \Vert_{M^1_{v_s}(\R\times\Qp)} = \inf \big\{ \sum_{j\in\N} \Vert f^{(\R)}_{j} \Vert_{M^1_{v_s}(\R)} \, \Vert  f^{(\Qp)}_{j} \Vert_{M^1_{v_s}(\Qp)} \big\},\]
where the functions $f$, $\{f_{j}^{(\R)}\}_{j\in\N}$ and $\{ f_{j}^{(\Qp)}\}_{j\in\N}$ are related as in \eqref{eq:1806b} and the infimum is taken over all possible representations of $f$ as in \eqref{eq:1806b}. A function space constructed with the help of weighted Feichtinger algebras for the p-adics is also investigated in \cite{DiJa19}.

For every $\omega = (\omega_{\infty},\omega_{p})\in \R\times\Qp$ we define the modulation operator by
\[ M_{\omega} f(t_{\infty},t_{p}):= M_{\omega_{\infty},\omega_{p}} f(t_{\infty},t_{p}) = e^{2\pi i (\omega_{\infty} t_{\infty}-\{ \omega_{p} t_{p}\}_{p})}f(t_{\infty},t_{p}), \ (t_{\infty},t_{p}) \in \R\times\Qp.\]
A Gabor system generated by a function $g\in L^{2}(\R\times\Qp)$ and the lattice 
\[ \La = \psi_{\alpha}(\Z[1/p])\times \psi_{\beta}(\Z[1/p]) = \left\{  (\alpha q, q, \beta r, r) : q,r\in \Z[1/p] \right\}, \ \alpha,\beta>0\] 
is thus of the form
\[ \{ \pi(\lambda) g\}_{\la\in\La} = \big\{ (t_{\infty},t_{p}) \mapsto e^{2\pi i (\beta r t_{\infty}-\{ r t_{p}\}_{p})} g(t_{\infty}-\alpha q,t_{p}-q) \big\}_{q,r\in\Z[1/p]}. \]

We introduce a noncommutative solenoid as the twisted group $C^*$-algebra $C^*(\Lambda, c)$ of $\Lambda$, see \cite{LaPa13,LaPa15}. 
%For $\alpha,\beta>0$ we define the following two Banach algebras:
%\begin{align*} \lmodule & = \big\{ \mathbf{a} \in \mathbb{B}(L^{2}(\R\times\Qp)) \, : \, \mathbf{a} = \sum_{q,r\in\Z[1/p]} a(q,r) E_{\beta %r,r}T_{\alpha q,q}, \ a\in \ell^{1}_{s}(\Z[1/p]^{2}) \big\}, \\
%\rmodule & = \big\{ \mathbf{b} \in \mathbb{B}(L^{2}(\R\times\Qp)) \, : \, \mathbf{b} = \frac{1}{\alpha\beta}\sum_{q,r\in\Z[1/p]} b(q,r) \big( %E_{\alpha^{-1} r,r}T_{\beta^{-1} q,q}\big)^{*}, \ b\in \ell^{1}_{s}(\Z[1/p]^{2}) \big\}.\end{align*}\todo{In the rest of the paper we have used the corresponding sequence spaces. I don't think we should change this now.}

%Indeed, the norms $\Vert \mathbf{a} \Vert_{\mathcal{A}} = \Vert a \Vert_{\ell^1_s}$, $\Vert \mathbf{b}\Vert_{\rmodule} = \Vert b \Vert_{\ell^1_s}$ (where $\mathbf{a},a,\mathbf{b}$ and $b$ are related as above) turn $\lmodule$ and $\rmodule$ into involutive Banach algebras with respect to composition of operators and where the involution is the $L^{2}$-adjoint. 

Observe that $C^*(\Lambda, c)$ is \emph{not} generated by finitely many unitaries as is the case of the noncommutative 2-torus.

We specialize the definition of a Dirac operator \eqref{eq:def-D-spectral-op} to $\RR\times\Qp$ for the lattice $\Lambda=\{ (\alpha q, q, \beta r, r) : q,r \in \Z[1/p]\}$ 
\begin{equation}
	D= \begin{pmatrix}
	0 & (1+|x|^2+|\omega|^2+|q|^2+|r|^2)^{s/2} \\ 
	(1+|x|^2+|\omega|^2+|q|^2+|r|^2)^{s/2} & 0
	\end{pmatrix}.
\end{equation}
Hence we have constructued a spectral triple on noncommutative solenoids, which as far as we know has not been considered before in the literature. One of the results in \cite[Corollary 3.3]{EnJaLu19} allows us to construct $QC^k$ structures on the Heisenberg module $E_{\R\times\Qp,\Lambda}$. 
\begin{prop}
For any $g^{(\R)}\in\SO(\R)$ and $\alpha,\beta>0$ the following statements are equivalent:
\begin{enumerate}
\item The function $g^{(\R)}$ generates a Gabor frame for $L^{2}(\R)$ with respect to the lattice $\alpha\Z\times\beta\Z$.
\item For any prime number $p$ the function $g=g^{(\R)}\otimes \mathds{1}_{\Zp}$ generates a Gabor frame for $L^{2}(\R\times\Qp)$ with respect to the lattice 
\[ \La = \psi_{\alpha}(\Z[1/p])\times \psi_{\beta}(\Z[1/p])= \{ (\alpha q, q, \beta r, r) : q,r \in \Z[1/p] \} \subset \R\times\Qp\times\widehat{\R}\times\widehat{\Qp}.\]
\end{enumerate}
\end{prop}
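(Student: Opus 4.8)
The plan is to exploit the fact that both the window $g=g^{(\R)}\otimes\mathds{1}_{\Zp}$ and the lattice $\La=\psi_{\alpha}(\Z[1/p])\times\psi_{\beta}(\Z[1/p])$ split along the decomposition $L^2(\R\times\Qp)\cong L^2(\R)\otimes L^2(\Qp)$, and then to diagonalize the frame operator against a $p$-adic orthonormal basis generated by $\mathds{1}_{\Zp}$. First I would record that for $\lambda=(\alpha q,q,\beta r,r)$ with $q,r\in\Z[1/p]$, the time-frequency shift factors \emph{exactly} as a tensor product $\pi(\lambda)=\pi^{(\R)}(\alpha q,\beta r)\otimes\pi^{(\Qp)}(q,r)$, because a character of a product group is a product of characters and translation is componentwise. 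Consequently $\pi(\lambda)g=\big(\pi^{(\R)}(\alpha q,\beta r)g^{(\R)}\big)\otimes\psi_{q,r}$, where I abbreviate $\psi_{q,r}:=\pi^{(\Qp)}(q,r)\mathds{1}_{\Zp}$.

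The heart of the argument, and the step I expect to be the main obstacle, is to show that the $p$-adic system $\{\psi_{q_0,r_0}\}$ indexed by $(q_0,r_0)\in(\Z[1/p]/\Z)^2$ is an orthonormal basis of $L^2(\Qp)$. Using $\Z=\Zp\cap\Z[1/p]$ one checks that translation by $\Z$ fixes $\mathds{1}_{\Zp}$ and that modulation by $\Z$ is constant on each coset $\Zp+q$, so $\psi_{q,r}$ depends on $(q,r)$ only modulo $\Z\times\Z$ up to a unimodular phase. A direct computation then gives $\langle\mathds{1}_{\Zp},\psi_{a,b}\rangle=1$ when $a,b\in\Z$ and $0$ otherwise, because two cosets of $\Zp$ are equal or disjoint and a nontrivial character of the compact group $\Zp$ integrates to zero; this yields orthonormality, and completeness follows since for each fixed coset the characters $s\mapsto e^{2\pi i\{r_0 s\}_p}$ already span $L^2(\Zp)$. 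This orthonormal basis statement is the genuinely $p$-adic input and is essentially the content of the result quoted from \cite{EnJaLu19}; I would isolate it as a lemma.

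With the basis in hand I would decompose $f=\sum_{(q_0,r_0)}(P_{q_0,r_0}f)\otimes\psi_{q_0,r_0}$, where $P_{q_0,r_0}f$ is the partial inner product of $f$ against $\psi_{q_0,r_0}$ in the $\Qp$-variable, so that $\sum_{(q_0,r_0)}\|P_{q_0,r_0}f\|_{L^2(\R)}^2=\|f\|^2$. Since each $\lambda$ lies over a single residue $(q_0,r_0)$ and all unimodular phases disappear under $|\cdot|^2$, the frame sum collapses to
\[ \sum_{\lambda\in\La}|\langle f,\pi(\lambda)g\rangle|^2=\sum_{(q_0,r_0)}\sum_{(m,n)\in\Z^2}\big|\langle P_{q_0,r_0}f,\;\pi^{(\R)}((\alpha q_0,\beta r_0)+(\alpha m,\beta n))g^{(\R)}\rangle\big|^2. \]
Conjugating the inner sum by the unitary $\pi^{(\R)}(\alpha q_0,\beta r_0)$ and using $|c|=1$ in the cocycle identities recorded earlier turns it into the genuine real Gabor sum for $h_{q_0,r_0}:=\pi^{(\R)}(\alpha q_0,\beta r_0)^*P_{q_0,r_0}f$ over the lattice $\alpha\Z\times\beta\Z$, with $\|h_{q_0,r_0}\|=\|P_{q_0,r_0}f\|$. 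Equivalently, $S_{g,\La}$ is block diagonal in the $\psi$-decomposition, with every block unitarily equivalent to the single operator $S_{g^{(\R)},\alpha\Z\times\beta\Z}$.

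This makes both implications immediate and even preserves the frame bounds. If $g^{(\R)}$ is a frame for $L^2(\R)$ with bounds $C,D$, then every block obeys the same bounds, and summing against $\sum_{(q_0,r_0)}\|P_{q_0,r_0}f\|^2=\|f\|^2$ gives (2). Conversely, testing (2) on $f=f_1\otimes\mathds{1}_{\Zp}=f_1\otimes\psi_{0,0}$ kills every block except $(0,0)$, and since $\mu_{\Qp}(\Zp)=1$ one has $\|f\|^2=\|f_1\|^2$ while the left-hand side reduces to $\sum_{m,n\in\Z}|\langle f_1,\pi^{(\R)}(\alpha m,\beta n)g^{(\R)}\rangle|^2$, which is exactly (1). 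In either formulation the frame-operator criterion for multiwindow Gabor systems then delivers the stated equivalence.
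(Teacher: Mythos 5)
Your argument is correct, but note that the paper does not actually prove this proposition: it is imported verbatim from \cite[Corollary 3.3]{EnJaLu19} and used as a black box, so there is no internal proof to compare against. What you have written is a valid, self-contained proof. The two pillars are sound: (a) the exact tensor factorization $\pi(\la)=\pi^{(\R)}(\alpha q,\beta r)\otimes\pi^{(\Qp)}(q,r)$ for $\la=(\alpha q,q,\beta r,r)$, which holds because the characters in \eqref{eq:2006b} split as products and translation acts componentwise; and (b) the lemma that $\{\pi^{(\Qp)}(q_0,r_0)\mathds{1}_{\Zp}\}$, indexed by $(q_0,r_0)\in(\Z[1/p]/\Z)^2$, is an orthonormal basis of $L^2(\Qp)$ --- your verification via $\Zp\cap\Z[1/p]=\Z$, disjointness of distinct cosets of $\Zp$, vanishing integrals of nontrivial characters of the compact group $\Zp$, and completeness from $\widehat{\Zp}\cong\Z[1/p]/\Z$ is exactly the genuinely $p$-adic input, and isolating it as a lemma is the right move. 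The resulting block-diagonalization of $S_{g,\La}$ into copies of $S_{g^{(\R)},\alpha\Z\times\beta\Z}$ (after choosing coset representatives and conjugating each block by the unitary $\pi^{(\R)}(\alpha q_0,\beta r_0)$, with all cocycle phases killed by $|\cdot|^2$ and the rearrangements justified by nonnegativity) gives both implications with preservation of the optimal frame bounds, which is slightly more than the bare equivalence asserted. The only presentational caveat is to state explicitly that a section of $(\Z[1/p]/\Z)^2$ in $\Z[1/p]^2$ has been fixed so that the reindexing $\la\leftrightarrow((q_0,r_0),(m,n))$ is a bijection; this is harmless since any two choices differ by unimodular phases.
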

Thus the results for the noncommutative 2-torus yield also $QC^k$-structures for the Heisenberg modules over noncommutative solenoids. 
Hence we have $QC^k$-structures on the Heisenberg module $\heis{\RR\times\Qp}{\La}$ which does not rely on any kind of derivations on the noncommutative solenoids and indicates the usefulness of modulation spaces in this context.  

\section{Acknowledgement}
The authors would like to thank Adam Rennie for his comments on earlier drafts  of this manuscript. The authors also wish to thank Eirik Skrettingland for help with the examples. 
%
%
%
%\bibliographystyle{abbrv}
%\bibliography{ModSmoothStructures}

\end{document}